\newtheorem{theorem}{Theorem}
\theoremstyle{plain}
\newtheorem{lemma}[theorem]{Lemma}
\newtheorem{proposition}[theorem]{Proposition}
\newtheorem{corollary}[theorem]{Corollary}
\newtheorem{remark}[theorem]{Remark}
\numberwithin{equation}{section}
\numberwithin{theorem}{section}
\newcommand{\cU}{\mathcal{U}}
\newcommand{\R}{\mathbb{R}}
\newcommand{\N}{\mathbb{N}}
\def\Om{\Omega}
\begin{document}
%%%%%%%%%%%%%%%%%%%%%%%%%%%%%%%%%%%%%%%%%%%%%%%%%%%
\title
{Observability Inequality from Measurable Sets and the Stackelberg-Nash Game Problem for Degenerate Parabolic Equations}
%%%%%%%%%%%%%%%%%%%%%%%%%%%%%%%%%%%%%%%%%%%%%%%%%%%

\author{\sffamily Yuanhang Liu$^{1}$, Weijia Wu$^{1,*}$, Donghui Yang$^1$, Can Zhang$^2$   \\
	{\sffamily\small $^1$ School of Mathematics and Statistics, Central South University, Changsha 410083, China. }\\
 {\sffamily\small $^2$ School of Mathematics and Statistics, Wuhan University, Wuhan 430072, China. }
}
	\footnotetext[1]{Corresponding author: weijiawu@yeah.net }

\email{liuyuanhang97@163.com}
\email{weijiawu@yeah.net}
\email{donghyang@outlook.com}
\email{zhangcansx@163.com}

\keywords{Observability inequality, measurable sets, Stackelberg-Nash equilibrium, degenerate parabolic equations}
\subjclass[2020]{93B05,93B07}

\maketitle

\begin{abstract}
In this study, we employ the established Carleman estimates and propagation estimates of smallness from measurable sets for real analytic functions, along with the telescoping series method, to establish an observability inequality for the degenerate parabolic equation over measurable subsets in the time-space domain. As a direct application, we formulate a captivating Stackelberg-Nash game problem and provide a proof of the existence of its equilibrium. Additionally, we characterize the set of Stackelberg-Nash equilibria and delve into the analysis of a norm optimal control problem.
\end{abstract}

\pagestyle{myheadings}
\thispagestyle{plain}
\markboth{OBSERVABILITY INEQUALITY AND STACKELBERG-NASH EQUILIBRIUM}{YUANHANG LIU, WEIJIA WU AND DONGHUI YANG}

%\tableofcontents
\section{Introduction}
Observability inequality is a powerful and significant tool for investigating stability and controllability problems in evolution equations. The degenerate parabolic equation, which is a common class of diffusion equations, can describe numerous physical phenomena, such as laminar flow, large ice blocks, solar radiation-climate interactions, and population genetics (for more detailed descriptions, refer to \cite{cannarsa2016global}). As a result, the control problems for such equations have gained considerable attention, with studies on controllability and observability problems for some degenerate parabolic equations in \cite{cannarsa2016global} and its extensive references.
In \cite{cannarsa2008carleman}, the authors derived the observability inequality for a kind of degenerate parabolic problems, where the control acts on an open sets, by Carleman estimates based on the choice of suitable weighted functions and Hardy-type inequalities. In \cite{fragnelli2016interior},
the author considered non-smooth general degenerate parabolic equations in non-divergence form with the control acts on an open sets and with degeneracy and singularity occurring in the interior of the spatial domain, in presence of Dirichlet or Neumann boundary conditions. In particular, they obtained the observability inequality for the associated adjoint problem by Carleman estimates. In \cite{boutaayamou2018carleman}, the authors considered a parabolic problem with degeneracy in the interior
of the spatial domain and Neumann boundary conditions  with the control acts on an open sets. In particular, they focused
on the well-posedness of the problem and on Carleman estimates for the associated
adjoint problem. As a
consequence, new observability inequalities were established. For other controllability issues related to degenerate parabolic equations, see \cite{cannarsa2005null,cannarsa2019null,du2014approximate,floridia2014approximate} and references therein.

It is worth noting that the current researches on observability inequalities for degenerate parabolic equations have primarily focused on control exerted on open sets, with little existing results considering control exerted on measurable sets. The reason for these is that in the above works, the main technique used in the arguments, Carleman inequalities, requires to construct suitable
Carleman weights: a role for functions which requires smoothness and to have the extreme values
in proper regions associated to the control region. The
construction of such functions seems to be not possible, when the control region does not an open set. In \cite{liu2017observability}, The authors employed the available Carleman estimates, propagation estimates of smallness from measurable sets for real analytic functions, and the telescoping series method to establish an observability inequality from measurable subsets in the time-space variable for the degenerate parabolic equation with the Grushin operator in certain multidimensional domains.
On the contrary, for non-degenerate parabolic equations, there have been extensive findings pertaining to control exerted on measurable sets. The authors in \cite{apraiz2014observability,phung2013observability} establish the observability
inequality of the heat equation for the measurable subsets, and show the null
controllability with controls restricted over these sets. In \cite{escauriaza2015observation}, the authors found new quantitative estimates on the space–time analyticity of solutions to linear parabolic evolutions with time-independent analytic coefficients and applied them to obtain observability inequalities for its solutions over measurable sets. 

The current paper represents an advancement in the study of observability inequality for measurable subsets within the context of degenerate parabolic equations. However, unlike the approach discussed earlier, which relied on Carleman estimates, this paper builds upon the conventional observability inequality established in \cite{cannarsa2008carleman} and \cite{buffe2018spectral} for open subsets. Notably, it is currently not feasible to derive an observability inequality for degenerate parabolic equations from measurable subsets using the Carleman estimates method. Nevertheless, we have made an intriguing observation: When the observation domain is sufficiently distant from the singularity, we can derive an observability inequality with a constant term on the right-hand side, expressed as $e^{\frac{C}{T^k}}$, where $C$ and $k$ are positive parameters, and the solution is real analytic. By employing a telescoping series argument, we are able to obtain the desired result. This novel approach provides fresh insights into the controllability of degenerate parabolic equations, offering alternative perspectives beyond the sole reliance on Carleman estimates.

As an important application, we consider a game problem for degenerate parabolic equations. Differential games were introduced originally by Isaacs (see
\cite{isaacs1965}). Since then,
lots of researchers were attracted to establish and improve the related theory. Meanwhile,
the theory was applied to a large number of fields. For a comprehensive survey on the
differential game theory, we refer to
\cite{bacsar1998dynamic,elliott1972existence,evans1984differential,friedman1971} and the references therein. The famous game problem, Stackelberg-Nash game problem, is an interesting question in the framework of mathematical finance. For instance, it is well
known that the price of an European call option is governed by a backward PDE. The independent space variable must be interpreted as the stock price and the time variable is in fact the reverse of time. In this regard, it can be interesting to control the solution of the system with the composed action
of several agents, each of them corresponding to a different range of values of the space variable. For further information on
the modeling and control of phenomena of this kind, see for instance \cite{ross2011elementary,wilmott1995mathematics}. For more Stackelberg–Nash game problems, we
refer the reader to the works in \cite{diaz2004approximate,guillen2013approximate}. In this paper, based on the Stackelberg–Nash null controllability, see \cite{araruna2015stackelberg,araruna2018stackelberg,araruna2017new}, we will focus on 
the characterizations of the set of Stackelberg–Nash equilibria and a norm optimal control problem. To the best of our knowledge, this paper is the first attempt to obtain the existence of the  Stackelberg–Nash equilibria by the Kakutani type fixed
point theorems for degenerate parabolic equations. Furthermore, we  characterize the sets of Stackelberg–Nash equilibria and consider a norm optimal control problem. After that, we can mention \cite{wang2018game}, which seems the first time to apply the Kakutani type fixed
point theorems to the game problems for heat equations. 

%Before we state our main theorems, let us introduce necessary notations.

The remaining sections of this paper are organized as follows. In Section 2, we present the formulation of the main problems and state the key results: Theorem \ref{thm:obs-inq}, Theorem \ref{Intro-3}, Theorem \ref{Intro-4}, and Theorem \ref{9.15.1}. The proof of Theorem \ref{thm:obs-inq} is provided in Section 3. In Section 4, we delve into the discussion of a forthcoming Stackelberg-Nash game problem (to be formulated later) and provide the proofs for Theorem \ref{Intro-3} and Theorem \ref{Intro-4}. These findings are instrumental in the subsequent discussion of a norm optimal control problem (to be formulated later) and the proof of Theorem \ref{9.15.1} in Section 5.

\section{Problems formulation and main results}
In this section, we will outline the principal issues investigated and present the pivotal findings of this manuscript. Firstly, let us introduce necessary notations.

Let $T>0$ be a fixed positive time constant, and $I:=(0,1)$. Throughout this paper, we denote by $\langle\cdot,\cdot\rangle$ the scalar product in $L^2(I)$ and denote by $\|\cdot\|$ the norm induced by $\langle\cdot,\cdot\rangle$. We denote by $|\cdot|$ the Lebesgue measure on $\R$.

Let $A$ be an unbounded linear operator on $L^2(I)$:
\begin{equation}\label{operator}
\begin{cases}
\mathcal{D}(A):=\{v\in H_\alpha^1(I):(x^{\alpha}v_x)_x\in L^2(I) ~\text{and}~ BC_\alpha(v)=0 \},\\
Av :=  (x^{\alpha}v_x)_x,\ \forall v\in\mathcal{D}(A),\,\alpha\in(0,2),
\end{cases}
\end{equation}
where
$$
\begin{array}{ll}
H^1_\alpha(I):=\bigg\{v\in L^2(I):v \text{ is absolutely continuous in}~ I,
 x^{\frac{\alpha}{2}}v_x\in L^2(I)\,\text{and}~\,v(1)=0 \bigg\},
\end{array}
$$
and
\begin{equation*}
BC_\alpha(v)=
\begin{cases}
v_{|_{x=0}}, &\alpha\in(0,1),\\
(x^{\alpha}v_x)_{|_{x=0}}, &\alpha\in[1,2), 
\end{cases}
\end{equation*}
endowed with the norms
$$
\|v\|^2_{H^1_\alpha(I)}:=\|v\|^2+\|\sqrt{x^\alpha}v_x\|^2.
$$

The system, that we consider in this paper, is described by the following degenerate equation:
\begin{equation}
\label{eq:main}
\left\{
\begin{array}{ll}
y_t(x,t)-Ay(x,t) = 0, & \left( x ,t\right) \in I\times(0,T),   \\[2mm]
y(1,t)=BC_\alpha\big(y(\cdot,t)\big) = 0, & t\in \left(0,T\right), \\[2mm]
y\left(x, 0\right) =y_{0}(x), &  x\in I\,,
\end{array}
\right.
\end{equation}
where initial state $y_0 \in L^2(I)$, $y$ is the state variable. By \cite{cannarsa2008carleman}, one can check that for all $y_0\in L^2(I)$, system (\ref{eq:main}) admit a unique solution $y$ in the space $C([0,T];L^2(I))\cap L^2(0,T;H_\alpha^1(I))$.

The main result of this paper is the following observability inequality for the degenerate parabolic equation (\ref{eq:main}):
\begin{theorem}
  \label{thm:obs-inq}
Let $T>0$, $\alpha\in(0,2)$. Let $\mu\in(0,1)$ be defined as
\begin{equation}\label{u}
\mu=
\left\{
\begin{array}{lll}
\dfrac{3}{4}, &~\text{if}~ \alpha\in(0,2)\setminus\{1\},\\[3mm]
\dfrac{3}{2\gamma}  ~\text{for any}~\gamma\in(0,2),&~\text{if}~ \alpha=1,
\end{array}
\right.
\end{equation}
and $D\subset I\times(0,T)$ be a measurable subset with positive measures. Then there exists a constant $C=C(T,I, \alpha,D,\mu)\geq1$ such that the solution of equation (\ref{eq:main})  satisfies the following observability inequality: for any $y_0\in L^2(I)$,
\begin{equation}
  \label{eq:obs-inq}
  \|y(T)\|\le C \int_D|y(x,t)|dxdt.
\end{equation}
\end{theorem}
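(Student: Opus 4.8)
The plan is to combine three ingredients, in the spirit of the measurable-set observability theory developed for the non-degenerate heat equation: the conventional open-set observability inequality of \cite{cannarsa2008carleman, buffe2018spectral}, a propagation-of-smallness estimate from measurable sets for real analytic functions, and Phung--Wang's telescoping series method \cite{phung2013observability}. First I would reduce the observation set to one that avoids the degeneracy at $x=0$: since $|D|>0$, there exist $\varepsilon\in(0,1)$ and a subset $D'\subset D\cap\big([\varepsilon,1)\times(0,T)\big)$ with $|D'|>0$, and because $\int_{D'}|y|\le\int_D|y|$ it suffices to prove \eqref{eq:obs-inq} with $D'$ in place of $D$. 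On $[\varepsilon,1)$ the operator $A$ is uniformly elliptic, so from the spectral decomposition $y(\cdot,t)=\sum_k \langle y_0,\phi_k\rangle e^{-\lambda_k t}\phi_k$ — whose eigenfunctions are Bessel-type functions, real analytic on $(0,1]$ — I would record quantitative joint space-time analyticity bounds for $y$ on $[\varepsilon,1)\times(0,T)$, with an analyticity radius and derivative bounds controlled by a global factor of the type $e^{C/T^k}$ announced in the introduction. By Fubini, $|D'|>0$ produces a set $J\subset(0,T)$ with $|J|>0$ such that for every $t\in J$ the spatial slice $D'_t:=\{x:(x,t)\in D'\}$ has measure bounded below.

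Next comes the spatial reduction. For each $t\in J$, the map $x\mapsto y(x,t)$ is real analytic on $[\varepsilon,1)$ with the derivative bounds from the previous step, so the propagation-of-smallness estimate for analytic functions converts the integral over the measurable slice $D'_t$ into an integral over a fixed open interval $\omega$ compactly contained in $(\varepsilon,1)$, at the cost of an interpolation exponent $\theta\in(0,1)$ and a global factor; schematically
\begin{equation*}
\int_\omega |y(x,t)|\,dx \le C\,\Big(e^{C/T^k}\|y(t)\|\Big)^{1-\theta}\Big(\int_{D'_t}|y(x,t)|\,dx\Big)^{\theta}.
\end{equation*}
Combining this with the conventional open-set observability inequality on subintervals — which controls $\|y(t_2)\|$ by $e^{C/(t_2-t_1)^k}\int_{t_1}^{t_2}\!\int_\omega|y|\,dx\,dt$ — yields an interpolation inequality of the form $\|y(t_2)\|\le\big(Ce^{C/(t_2-t_1)^k}\int_{t_1}^{t_2}\!\int_{D'_t}|y|\,dx\,dt\big)^{\theta}\|y(t_1)\|^{1-\theta}$ relating the $L^2(I)$-norm at two times to the observation over $D'$ on the intervening interval.

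Finally I would run the telescoping argument. Fixing a density point $\ell$ of $J$ and a geometric sequence $z_m\downarrow\ell$ with the gaps $z_m-z_{m+1}$ tuned to the scaling $\mu$ in \eqref{u}, I apply the interpolation inequality on each $[z_{m+1},z_m]$, use Young's inequality to split the product, and exploit the dissipativity of the semigroup (so that $t\mapsto\|y(t)\|$ is non-increasing) to absorb the $\|y(z_{m+1})\|$ factors and telescope the resulting series. Summing over $m$ balances the blow-up factors $e^{C/(z_m-z_{m+1})^k}$ against the geometric decay of the interval lengths and gives $\|y(z_0)\|\le C\int_{D'}|y|\,dx\,dt$; propagating from $z_0$ to $T$ by monotonicity of the energy then delivers \eqref{eq:obs-inq}. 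The parameter $\mu$, and in particular its distinct value $3/(2\gamma)$ when $\alpha=1$, enters precisely here, through the Gevrey/analyticity exponent governing the time-regularity of $y$ and hence the admissible geometric ratio in the telescoping.

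The main obstacle I anticipate is the quantitative analyticity step: securing joint space-time analyticity of $y$ on $[\varepsilon,1)\times(0,T)$ with explicit, uniform control of the analyticity radius and of the global factor $e^{C/T^k}$ up to the degenerate endpoint, sharp enough that the interpolation exponent $\theta$ interacts correctly with the exponential blow-up $e^{C/(z_m-z_{m+1})^k}$ so the telescoping series converges. The case $\alpha=1$ is the most delicate, since the spectral and analyticity behavior degenerates differently there, which is exactly why the auxiliary parameter $\gamma$ and the modified $\mu$ are introduced.
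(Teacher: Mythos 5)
Your overall strategy coincides with the paper's own proof: reduction of the observation set away from the degeneracy at $x=0$, quantitative space--time analyticity of $y$ via the spectral decomposition (the paper's Lemma \ref{lemma-analytic}), propagation of smallness from measurable sets, an intermediate interpolation inequality between two times, and the Phung--Wang telescoping argument at a Lebesgue density point (Lemma \ref{lemma-1}), finishing by contractivity of the semigroup from $l_1$ to $T$. So the skeleton is right; the problem is in the middle.

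There is a genuine gap at the step where you combine the slice-wise spatial estimate with the open-set observability inequality. The open-set inequality (Lemma \ref{lemma-ob}) controls $\|y(t_2)\|$ by the integral of $|y|^2$ over $\omega\times(t_1,t_2)$, i.e.\ over the \emph{whole} time interval, whereas your spatial propagation-of-smallness bound $\int_\omega|y(x,t)|\,dx\le C\big(e^{C/T^k}\|y(t)\|\big)^{1-\theta}\big(\int_{D'_t}|y|\,dx\big)^{\theta}$ is only available for $t\in J$: for $t\notin J$ the slice $D'_t$ may have tiny measure or be empty, in which case the right-hand side vanishes while the left-hand side does not, so no such inequality can hold there. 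Consequently the portion of $\int_{t_1}^{t_2}\int_\omega|y|\,dx\,dt$ over $(t_1,t_2)\setminus J$ is left uncontrolled, and the interpolation inequality you assert does not follow from the two ingredients you state. The missing ingredient is a \emph{second} application of propagation of smallness, this time in the time variable: for each fixed $x\in\omega$ the map $t\mapsto y(x,t)$ is real analytic with the bounds from your analyticity step, and the one-dimensional measurable-set estimate (the paper's Lemma \ref{lemma-measurable-1}) yields $\|y(x,\cdot)\|_{L^\infty(\tau,t_2)}\le CM^{1-\theta}\big(\tfrac{1}{|F|}\int_F|y(x,t)|\,dt\big)^{\theta}$ with $F=J\cap(\tau,t_2)$ and $M=e^{C/(t_2-t_1)}\|y(\cdot,t_1)\|$; it is this estimate that converts the time integral over the full interval into one over the good times, after which your spatial step applies slice-wise on $F$. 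This is exactly how the paper proves Theorem \ref{thm:inter} (see (\ref{thm-5})--(\ref{thm-9})); the fix is available to you since you already established joint space--time analyticity, but as written the combination step fails. Two smaller inaccuracies: the open-set inequality is quadratic, so an $L^\infty$--$L^1$ interpolation (using the analyticity bound with $a=\gamma=0$) is needed before any $L^1$ quantity can appear; and the exponent $\mu$ in \eqref{u} does not come from the Gevrey/analyticity regularity of $y$ in time (those bounds carry the factor $e^{C/t}$, exponent $1$), but from the observability cost $e^{CT^{-\mu/(1-\mu)}}$ in Lemma \ref{lemma-ob}, inherited from the spectral estimates of \cite{buffe2018spectral}; it is that cost which dictates the geometric ratio $q$ in the telescoping.
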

As an important application of Theorem \ref{thm:obs-inq}, we consider the following degenerate problem with three controls (one leader and two followers), but very similar considerations hold for other systems (degenerated cases or non-degenerated cases) with a
higher number of controls.
\begin{equation}
\label{Intro-1}
\left\{
\begin{array}{ll}
y_t(x,t) = Ay(x,t) +\chi_\omega g+\chi_{\omega_1} u_1+\chi_{\omega_2} u_2, & \left( x ,t\right) \in (0,1)\times(0,T),   \\[2mm]
y(1,t)=BC_\alpha\big(y(\cdot,t)\big) = 0, & t\in \left(0,T\right), \\[2mm]
y\left(x, 0\right) =y_{0}(x), &  x\in (0,1)\,,
\end{array}
\right.
\end{equation}
where $A$ defined in (\ref{operator}),  $\omega,\omega_1,\omega_2$ are measurable subsets of $(0,1)$ with positive measures. $\omega$ is the main (leader) control domain and $\omega_1,\omega_2$ are the secondary (followers) control domains. The controls $g,u_1,u_2\in L^\infty(0,T;L^2(0,1))$, where $g$ is the main (leader) control and $u_1$ and $u_2$ are the secondary (followers) controls.  Initial state is $y_0\in L^2(0,1)$. The solution of system (\ref{Intro-1}) denote by $y(\cdot;y_0,g,u_1,u_2)$.

We define the following admissible sets of controls:
\begin{equation*}
\cU_0:=\big\{g\in L^\infty(0,T;L^2(0,1)):\|g\|_{L^\infty(0,T;L^2(0,1))}\leq M_0  \big\},
\end{equation*}
and
\begin{equation*}
\cU_1:=\big\{u_1\in L^\infty(0,T;L^2(0,1)):\|u_1\|_{L^\infty(0,T;L^2(0,1))}\leq M_1  \big\},
\end{equation*}
\begin{equation*}
\cU_2:=\big\{u_2\in L^\infty(0,T;L^2(0,1)):\|u_2\|_{L^\infty(0,T;L^2(0,1))}\leq M_2  \big\},
\end{equation*}
where $M_0,M_1,M_2$ are positive constants. Meanwhile, we introduce the following two functionals: For each $i=1,2$,
the functional $J_i: (L^\infty(0,T;L^2(0,1)))^2\rightarrow [0,+\infty)$ is defined by
\begin{equation}\label{Intro-2}
J_i(u_1,u_2):= \|y(T;y_0,g,u_1,u_2)-y^i_T\|_{L^2(G_i)},
\end{equation}
where $G_i\subset(0,1)\ (i=1,2)$ be measurable subsets such that $\omega_i\subset G_i$ representing observation domains for the followers and  $y^i_T\in L^2(0,1)$ and
\begin{equation}\label{y1-y2}
y^1_T\not=y^2_T.
\end{equation}

The mathematical models for the control problems are:

1. The followers $u_1$ and $u_2$ assume that the leader $g $ has made a choice and intend to be a Nash equilibrium for the cost functionals $J_i(u_1,u_2),i=1,2$. Thus, once the leader $g$ has been fixed:

{\bf(P1)}\;\;Does there exist $(u_1^*,u_2^*)\in {\mathcal U}_1\times {\mathcal U}_2$ with respect to $g$ so that
\begin{equation}\label{20180208-1}
J_1(u_1^*,u_2^*)\leq J_1(u_1,u_2^*)\;\;\mbox{for all}\;\;u_1\in {\mathcal U}_1
\end{equation}
and
\begin{equation}\label{20180208-2}
J_2(u_1^*,u_2^*)\leq J_2(u_1^*,u_2)\;\;\mbox{for all}\;\;u_2\in {\mathcal U}_2?
\end{equation}

We call the problem {\bf(P1)} as a {\it Stackelberg-Nash game problem}. It is a noncooperative
game problem of the followers. If the answer to the problem {\bf(P1)} is yes, we call $(u_1^*,u_2^*)$ a {\it Stackelberg-Nash equilibrium} (or an {\it optimal
strategy pair}, or an {\it optimal control pair}) of {\bf(P1)}. We can understand the problem {\bf(P1)} in the following manner:
There are two followers executing their strategies and hoping to achieve their goals $y_T^1$ and $y_T^2$, respectively.
If the first follower chooses the strategy $u_1^*$, then the second follower can execute the strategy $u_2^*$
so that $y(T;y_0,g,u_1^*,u_2^*)$ is closer to $y_T^2$; Conversely, if the second follower chooses the
strategy $u_2^*$, then the first follower can execute the strategy $u_1^*$ so that $y(T;y_0,g,u_1^*,u_2^*)$ is closer
to $y_T^1$. Roughly speaking, if one follower is deviating from $(u_1^*,u_2^*)$, then the cost functional of this
follower would get larger; and there is no information given if both followers are deviating from
the Nash equilibrium $(u_1^*,u_2^*)$. From the knowledge of game theory, in generally,
Nash equilibrium is not unique.

2. Once the Stackelberg-Nash equilibrium $(u_1^*,u_2^*)$ has been identified for each leader $g$, denoted by $(u_1^*(g), u_2^*(g))$, we consider the following
norm optimal control problem:
\begin{equation}\label{9.11.2}
{\bf(P2)}\quad\ N(T,y_0):=\inf_{g\in \cU_0}\{\|g\|_{L^\infty(0,T;L^2(0,1))}:y(T;y_0,g,u_1^*(g),u_2^*(g))=0\}.
\end{equation}

The first main result of the
{\it Stackelberg-Nash game problem}  {\bf(P1)} is about the existence of a Stackelberg-Nash equilibrium.

\begin{theorem}
\label{Intro-3}
Let $g\in L^\infty(0,T;L^2(0,1))$ be given. The problem {\bf(P1)} with respect to the system (\ref{Intro-1}) admits a  Stackelberg-Nash equilibrium, i.e., there exist
at least a pair of $(u_1^*,u_2^*)\in {\mathcal{U}}_1\times {\mathcal{U}_2}$
with respect to $g$ so that (\ref{20180208-1}) and (\ref{20180208-2}) hold.
\end{theorem}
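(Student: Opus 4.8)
The plan is to recast the existence of a Nash equilibrium as a fixed-point problem for a set-valued best-response map and to solve it via a Kakutani--Fan--Glicksberg type fixed point theorem. Since $g$ is fixed, the control-to-state map $(u_1,u_2)\mapsto y(\cdot;y_0,g,u_1,u_2)$ is affine, so for each $i$ the functional $J_i$ in (\ref{Intro-2}), being the composition of this affine map with the convex norm $\|\cdot\|_{L^2(G_i)}$, is convex and strongly continuous jointly in $(u_1,u_2)$. For fixed $v_2\in\cU_2$ I define the best-response set $R_1(v_2):=\{u_1\in\cU_1:J_1(u_1,v_2)\le J_1(w_1,v_2)\ \forall w_1\in\cU_1\}$, and symmetrically $R_2(v_1)\subset\cU_2$; then the set-valued map $\Lambda(v_1,v_2):=R_1(v_2)\times R_2(v_1)$ has the property that $(u_1^*,u_2^*)$ is a fixed point of $\Lambda$ if and only if it is a Stackelberg--Nash equilibrium, i.e. (\ref{20180208-1})--(\ref{20180208-2}) hold.

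To obtain compactness I equip $L^\infty(0,T;L^2(0,1))$ with its weak-$*$ topology, viewing it as the dual of the separable space $L^1(0,T;L^2(0,1))$. By the Banach--Alaoglu theorem the admissible sets $\cU_1,\cU_2$ are weak-$*$ compact and convex, and separability of the predual makes them weak-$*$ metrizable, so the whole argument can be run sequentially. The crucial analytic input is that the functionals $J_i$ are sequentially weak-$*$ continuous on $\cU_1\times\cU_2$. This follows from the parabolic smoothing of (\ref{Intro-1}): writing $y(T)$ through Duhamel's formula with the semigroup $\{e^{tA}\}_{t\ge0}$ generated by $A$, the linear maps $u_i\mapsto\int_0^T e^{(T-s)A}\chi_{\omega_i}u_i(s)\,ds$ are \emph{compact} from $L^2((0,1)\times(0,T))$ into $L^2(0,1)$, because $A$ has compact resolvent and $\{e^{tA}\}$ is compact for $t>0$ (see \cite{cannarsa2008carleman}). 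Hence weak-$*$ convergence of the controls forces strong $L^2(0,1)$ convergence of $y(T)$, and therefore convergence of the values $J_i$.

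With these properties in hand the hypotheses of the fixed point theorem are verified as follows. For each $(v_1,v_2)$ the sets $R_1(v_2)$ and $R_2(v_1)$ are nonempty (a continuous functional attains its infimum on a weak-$*$ compact set), convex (the minimizer set of a convex functional over a convex set is convex), and weak-$*$ closed; thus $\Lambda(v_1,v_2)$ is nonempty, convex and closed. To check the closed-graph (upper semicontinuity) property, suppose $(v_1^n,v_2^n)\rightharpoonup(v_1,v_2)$ and $(w_1^n,w_2^n)\in\Lambda(v_1^n,v_2^n)$ with $(w_1^n,w_2^n)\rightharpoonup(w_1,w_2)$ in the weak-$*$ sense. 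From $w_1^n\in R_1(v_2^n)$ we have $J_1(w_1^n,v_2^n)\le J_1(u_1,v_2^n)$ for every $u_1\in\cU_1$; passing to the limit using the sequential weak-$*$ continuity of $J_1$ yields $J_1(w_1,v_2)\le J_1(u_1,v_2)$ for all $u_1$, i.e. $w_1\in R_1(v_2)$, and symmetrically $w_2\in R_2(v_1)$, so $(w_1,w_2)\in\Lambda(v_1,v_2)$.

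Finally, applying the Kakutani--Fan--Glicksberg fixed point theorem to $\Lambda$ on the compact convex set $\cU_1\times\cU_2$ produces a fixed point $(u_1^*,u_2^*)$, which is precisely the desired Stackelberg--Nash equilibrium. I expect the main obstacle to be the continuity step: one must justify that weak-$*$ (equivalently, weak $L^2$) convergence of the followers' controls yields genuine strong convergence of the $J_i$, which rests on the compactness of the input-to-final-state operator for the degenerate equation and is what makes both the nonemptiness of the best-response sets and the closed-graph property go through; the remaining verifications (convexity, metrizability, passage to the limit) are routine once this is in place.
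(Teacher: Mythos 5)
Your proposal is correct, and its architecture is the same as the paper's: recast the Nash equilibrium as a fixed point of the best-response set-valued map (your $\Lambda$ is the paper's $\Phi$ in (\ref{result-3})) on the weak-$*$ compact convex set $\mathcal{U}_1\times\mathcal{U}_2$, and apply a Kakutani-type theorem after checking nonemptiness, convexity, and closedness of the graph. The only genuine divergence is how the crucial continuity step is established, namely that weak-$*$ convergence of the controls forces strong $L^2(0,1)$ convergence of $y(T)$, hence convergence of the $J_i$. The paper (Step 2 of its proof) argues at the PDE level: energy estimates and Gronwall's inequality for the difference of states, extraction of a weakly convergent subsequence, the compact embedding $H^1_\alpha(0,1)\hookrightarrow L^2(0,1)$, and identification of the limit as the zero solution of the homogeneous problem. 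You argue at the operator level: the input-to-final-state maps $u_i\mapsto\int_0^Te^{(T-s)A}\chi_{\omega_i}u_i(s)\,ds$ are compact because $e^{tA}$ is compact for $t>0$; this is correct and is justified by writing the operator as a norm limit of $e^{\epsilon A}\circ(\text{bounded operator})$ as $\epsilon\to0$, where the compactness of $e^{tA}$ follows from self-adjointness of $A$ together with compactness of its resolvent --- which is equivalent to the compact embedding $H^1_\alpha\hookrightarrow L^2$ that the paper itself invokes. So the two mechanisms rest on the same underlying compactness fact for the degenerate operator; yours yields a shorter limit argument (no need to pass to the limit in the equation and identify $\bar y=0$), while the paper's is more self-contained at the PDE level. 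One point where you are actually more careful than the paper: you make the topology explicit (weak-$*$ compactness by Banach--Alaoglu and metrizability via separability of the predual $L^1(0,T;L^2(0,1))$), which is what legitimizes verifying the closed-graph hypothesis sequentially; the paper's Step 1 dismisses this as ``easily checked.''
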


The second main result of the {\it Stackelberg-Nash game problem} {\bf(P1)} is the characterizations of the set of Stackelberg-Nash equilibria, which concerned with the following bang-bang property of the Stackelberg-Nash equilibria.
\begin{theorem}
\label{Intro-4}
Let $g\in L^\infty(0,T;L^2(0,1))$ be given and let $(u_1^*,u_2^*)\in {\mathcal{U}}_1\times {\mathcal{U}_2}$ be a Stackelberg-Nash equilibrium of the problem
{\bf(P1)} with respect to $g$. Then either
\begin{equation*}
\|u_1^*(t)\|_{L^2(G_1)}=M_1,~\text{for a.e.}~ t\in (0,T),
\end{equation*}
or
\begin{equation*}
\|u_2^*(t)\|_{L^2(G_2)}=M_2,~\text{for a.e.}~ t\in (0,T).
\end{equation*}
\end{theorem}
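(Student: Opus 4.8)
The plan is to exploit the fact that a Nash equilibrium decouples into two convex optimal control problems. With the leader $g$ and the partner control held fixed, $u_1^*$ minimizes the functional $u_1\mapsto J_1(u_1,u_2^*)$ over the pointwise-in-time ball $\cU_1$, and symmetrically for $u_2^*$. Since the solution map $u_i\mapsto y(T;y_0,g,u_1,u_2)$ is affine and the $L^2(G_i)$-norm is convex, each problem is the minimization of the distance of an affine image to a target over a closed convex set, so a first-order variational inequality is available at the equilibrium, and I would work entirely with these conditions.

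First I would put the optimality condition for follower $1$ (fixing $u_2=u_2^*$) into dual form. Introduce the adjoint state $\phi_1$ solving the backward degenerate system $-\partial_t\phi_1-A\phi_1=0$ on $I\times(0,T)$ with the boundary conditions of \eqref{eq:main} and terminal datum $\phi_1(\cdot,T)=\big(y(T;y_0,g,u_1^*,u_2^*)-y_T^1\big)\chi_{G_1}$. A transposition computation rewrites the Gateaux derivative $DJ_1(u_1^*)[v]$ as a constant multiple of $\int_0^T\langle\chi_{\omega_1}\phi_1(t),v(t)\rangle\,dt$, so when $J_1(u_1^*,u_2^*)>0$ the inequality $DJ_1(u_1^*)[u_1-u_1^*]\ge 0$ for all $u_1\in\cU_1$ says exactly that $u_1^*$ minimizes the linear functional $u_1\mapsto\int_0^T\langle\chi_{\omega_1}\phi_1(t),u_1(t)\rangle\,dt$ over $\cU_1$. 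Minimizing this pointwise in $t$ over the $L^2(0,1)$-ball of radius $M_1$ yields the bang-bang form $u_1^*(t)=-M_1\,\chi_{\omega_1}\phi_1(t)/\|\chi_{\omega_1}\phi_1(t)\|$ at every $t$ with $\chi_{\omega_1}\phi_1(t)\neq 0$; since this control is supported in $\omega_1\subseteq G_1$, it gives $\|u_1^*(t)\|_{L^2(G_1)}=M_1$ whenever $\phi_1(t)$ does not vanish identically on $\omega_1$.

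The next step, and the first genuine difficulty, is to show that the set $\{t:\ \phi_1(t)|_{\omega_1}\equiv 0\}$ is negligible unless $\phi_1\equiv 0$. Here I would invoke the unique continuation / space–time analyticity that underlies Theorem \ref{thm:obs-inq}: a solution of the degenerate adjoint equation vanishing on $\omega_1\times E$ for a time set $E\subset(0,T)$ of positive measure must vanish identically. Consequently, if the terminal datum is nonzero, i.e.\ $J_1(u_1^*,u_2^*)>0$, then $\phi_1(t)|_{\omega_1}\neq 0$ for a.e.\ $t$ and follower $1$ saturates, $\|u_1^*(t)\|_{L^2(G_1)}=M_1$ for a.e.\ $t\in(0,T)$; the same argument applies verbatim to follower $2$.

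It remains to exclude the scenario in which neither follower saturates. Arguing by contradiction, if $\|u_1^*(t)\|_{L^2(G_1)}<M_1$ and $\|u_2^*(t)\|_{L^2(G_2)}<M_2$ each hold on time sets of positive measure, the previous paragraph forces $J_1(u_1^*,u_2^*)=J_2(u_1^*,u_2^*)=0$, i.e.\ $y(T;y_0,g,u_1^*,u_2^*)=y_T^1$ a.e.\ on $G_1$ and $=y_T^2$ a.e.\ on $G_2$. This is precisely where hypothesis \eqref{y1-y2} enters, since the single final state $y(T)$ cannot simultaneously agree with two distinct targets on the common observation region, contradicting $y_T^1\neq y_T^2$. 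I expect this dichotomy, together with the degenerate unique-continuation input, to be the crux of the argument: the analyticity estimates behind Theorem \ref{thm:obs-inq} must be strong enough to propagate vanishing on a positive-measure time-slice of $\omega_i$ to global vanishing of the adjoint, and the comparison of the two reached targets must be carried out carefully on the overlap of $G_1$ and $G_2$.
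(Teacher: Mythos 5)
Your proposal follows essentially the same route as the paper's own proof: fixing the partner control, each follower's problem becomes a convex minimization whose variational inequality is dualized through the backward adjoint equation with terminal datum the residual restricted to $G_i$, the pointwise bang-bang form follows from the resulting maximum condition, and the possibility that $\chi_{\omega_i}z(t)$ vanishes on a positive-measure set of times is excluded exactly as you suggest, via the measurable-set observability inequality (the paper's Corollary \ref{co-obine}, i.e.\ Theorem \ref{thm:obs-inq} applied to the adjoint), with the final dichotomy resting on \eqref{y1-y2}. The overlap issue you flag — that one really needs $y_T^1\neq y_T^2$ on a positive-measure subset of $G_1\cap G_2$, not merely in $L^2(0,1)$ — is genuine, but it is equally present and unaddressed in the paper's argument (Lemma \ref{lemma-main} and Proposition \ref{pro1}), so your proposal is faithful to, and if anything slightly more careful than, the published proof.
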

\begin{remark}
It is noteworthy that in previous studies addressing the Stackelberg-Nash game problem, the scholars have predominantly concentrated on the examination of system controllability, such as exact controllability \cite{araruna2015stackelberg}, approximate controllability \cite{diaz2004approximate}, null controllability \cite{araruna2018stackelberg}, and the presence of Stackelberg-Nash equilibrium, while paying no attention to the comprehensive characterization of the set of Stackelberg-Nash equilibrium and their distinctive bang-bang property.
\end{remark}

The result of the norm optimal control problem {\bf(P2)} is concered with the characterization of the leader. To the end, we make some assumptions:
\begin{itemize}
\item [$(A_1)$.] $\omega_i\subset\omega\subset(0,1)$ for $i=1,2$.
\item [$(A_2)$.] System \eqref{Intro-1} is null controllability, i.e. there exists a control $g\in L^\infty(0,T;L^2(0,1))$ such that
the corresponding solution $y$ of (\ref{Intro-1}) satisfies
$
y(T;y_0,g,u_1^*(g),u_2^*(g))=0,~\text{in}~ (0,1).
$
\end{itemize}
\begin{remark}
    The assumption $(A_1)$ is exclusively utilized for the purpose of characterizing the leader $g$, specifically in Theorem \ref{9.15.1}, which will be introduced later. The assumption $(A_2)$ is well-founded, supported by several  references, e.g., \cite{araruna2015stackelberg,araruna2018stackelberg,araruna2017new} demonstrating the null controllability of system \eqref{Intro-1}. Nevertheless, in the present section, our primary emphasis lies in delineating the properties of the leader $g$. Therefore, we have omitted the detailed proof of null controllability of system \eqref{Intro-1}, directing interested readers to consult the above references for further exploration.
\end{remark}

Let us introduce the following adjoint equation corresponding to (\ref{eq:main}):
\begin{equation}
\label{eq:adjoint}
\left\{
\begin{array}{ll}
z_t(x,t)+Az(x,t) = 0, & \left( x ,t\right) \in (0,1)\times(0,T),   \\[2mm]
z(1,t)=BC_\alpha\big(z(\cdot,t)\big) = 0, & t\in \left(0,T\right), \\[2mm]
z\left(x, T\right) =z_T(x), &  x\in (0,1)\,,
\end{array}
\right.
\end{equation}
where $z_T(x)\in L^2(0,1)$. 

Now, for every  $T\in\mathbb{R}^+$, let
	\begin{equation}\label{9.17.7}
	X_T=\{\chi_\omega z(\cdot; z_T)\bigm| z_T\in L^2(0,1)\},
	\end{equation}
	where $z(\cdot;z_T)$ is the solution of  system \eqref{eq:adjoint} with the terminal value  $z_T\in L^2(0,1)$.
	It is obviously that $X_T$ is a linear subspace of $L^1(0,T; L^2(0,1))$.
	Define
	\begin{equation}\label{9.18.1}
	Y_T=\overline{X_T}^{\|\cdot\|_{L^1(0,T; L^2(0,1))}}
	\end{equation}
	and denote
	\begin{equation}\label{9.18.2}
	Z_T=\left\{\chi_\omega z \in L^1(0,T; L^2(0,1))\ \left|\
	\begin{array}{lll}
	\mbox{for every } s\in (0,T), \mbox{ there exists } z_{T,s}\in L^2(0,1)\\
	\mbox{such that } z(\cdot)=e^{A(s-\cdot)}z_{T,s} \mbox{ on } [0,s]
	\end{array}\right.\right\}.
	\end{equation}

\begin{lemma}\label{lemma-g-exist}
Suppose $(A_2)$ holds. Let $T>0$, $y_0\in L^2(0,1)$.    Let Stackelberg-Nash equilibria $(u_1^*(g),u_2^*(g))$ be given with respect to $g$. Then there exists at least one $g^*\in L^\infty(0,T; L^2(0,1))$ such that
    $
N(T,y_0)=\|g^*\|_{L^\infty(0,T; L^2(0,1))}.
    $
\end{lemma}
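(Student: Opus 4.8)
The plan is to run the direct method, using the weak-$*$ compactness of the admissible control sets together with the smoothing (compactness) of the parabolic input-to-final-state map to pass to the limit in the state constraint and in the Nash inequalities simultaneously. First, assumption $(A_2)$ guarantees that the feasible set $\{g\in\cU_0:\ y(T;y_0,g,u_1^*(g),u_2^*(g))=0\}$ is nonempty, so $N(T,y_0)<+\infty$ and I may pick a minimizing sequence $\{g_n\}\subset\cU_0$ with $\|g_n\|_{L^\infty(0,T;L^2(0,1))}\to N(T,y_0)$ and $y(T;y_0,g_n,u_1^*(g_n),u_2^*(g_n))=0$ for every $n$; abbreviate $(u_{1,n},u_{2,n}):=(u_1^*(g_n),u_2^*(g_n))\in\cU_1\times\cU_2$. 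Since $L^\infty(0,T;L^2(0,1))$ is the dual of $L^1(0,T;L^2(0,1))$ (here $L^2(0,1)$ is reflexive), the balls $\cU_0,\cU_1,\cU_2$ are weak-$*$ compact by Banach--Alaoglu; after extracting a subsequence I obtain $g_n\rightharpoonup^* g^*$ in $\cU_0$ and $u_{i,n}\rightharpoonup^* \hat u_i$ in $\cU_i$ for $i=1,2$.

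The key analytic ingredient is compactness of the control-to-final-state operators. Writing the affine representation $y(T;y_0,g,u_1,u_2)=e^{AT}y_0+L_\omega g+L_{\omega_1}u_1+L_{\omega_2}u_2$ with $L_{\omega'}v:=\int_0^T e^{A(T-s)}\chi_{\omega'}v(s)\,ds$, I would show that each $L_{\omega'}:L^2(0,T;L^2(0,1))\to L^2(0,1)$ is compact. This rests on the fact that $A$ has compact resolvent (so $e^{At}$ is compact for $t>0$, which is already implicit in the spectral framework of \cite{cannarsa2008carleman,buffe2018spectral} used earlier): splitting $\int_0^T=\int_0^{T-\delta}+\int_{T-\delta}^T$ and factoring $e^{A(T-s)}=e^{A\delta/2}e^{A(T-s-\delta/2)}$ on the first piece exhibits it as $e^{A\delta/2}$ composed with a bounded operator, hence compact, while the second piece has operator norm $O(\sqrt{\delta})$; letting $\delta\to0$ presents $L_{\omega'}$ as an operator-norm limit of compact operators. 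Because weak-$*$ convergence in $L^\infty(0,T;L^2)$ forces weak convergence in $L^2(0,T;L^2)$ on the finite interval $(0,T)$, compactness upgrades the weak convergence of the controls to strong convergence of the final states in $L^2(0,1)$.

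With strong convergence available, the constraint passes to the limit at once: $y(T;y_0,g^*,\hat u_1,\hat u_2)=\lim_n y(T;y_0,g_n,u_{1,n},u_{2,n})=0$. The main step, and the chief obstacle, is to identify $(\hat u_1,\hat u_2)$ as a Stackelberg--Nash equilibrium for $g^*$ by passing to the limit in the Nash inequalities. Writing $J_i^{g}(v_1,v_2):=\|y(T;y_0,g,v_1,v_2)-y_T^i\|_{L^2(G_i)}$ to make the dependence on the leader explicit, I fix $u_1\in\cU_1$ in \eqref{20180208-1} written for $g_n$; strong convergence of the two families of final states yields both $J_1^{g_n}(u_{1,n},u_{2,n})\to J_1^{g^*}(\hat u_1,\hat u_2)$ and $J_1^{g_n}(u_1,u_{2,n})\to J_1^{g^*}(u_1,\hat u_2)$, since $J_i^g$ is continuous for the strong topology as the composition of the affine state map with a norm. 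Hence \eqref{20180208-1} survives in the limit, and the same argument handles \eqref{20180208-2}; thus $(\hat u_1,\hat u_2)$ is a Nash equilibrium relative to $g^*$, which I take as $(u_1^*(g^*),u_2^*(g^*))$, making $g^*$ feasible. Finally, weak-$*$ lower semicontinuity of the norm gives $\|g^*\|_{L^\infty(0,T;L^2(0,1))}\le\liminf_n\|g_n\|_{L^\infty(0,T;L^2(0,1))}=N(T,y_0)$, while feasibility forces the reverse inequality, so $\|g^*\|_{L^\infty(0,T;L^2(0,1))}=N(T,y_0)$.

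The delicate point throughout is that the functionals $J_i$ are merely convex, not strictly convex, so the Nash equilibrium need not be unique and no a priori continuity of the selection $g\mapsto(u_1^*(g),u_2^*(g))$ is at hand; it is precisely the parabolic compactness of $L_{\omega'}$ that lets me pass to the limit in \emph{both} directions of each Nash inequality without invoking such continuity, and that simultaneously closes the graph of the equilibrium correspondence and the feasible set under weak-$*$ limits.
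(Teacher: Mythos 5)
Your proposal is correct in substance, and it reaches the conclusion by a genuinely different route than the paper at the one step where the argument is delicate. Both proofs begin the same way: a minimizing sequence $\{g_n\}$, weak-$*$ compactness of $\cU_0,\cU_1,\cU_2$, strong $L^2(0,1)$-convergence of the final states (you get this from compactness of the semigroup plus the operator-norm splitting of $L_{\omega'}$; the paper gets the same fact from energy estimates and the compact embedding $H^1_\alpha\hookrightarrow L^2$, exactly as in the derivation of \eqref{result-6}), passage to the limit in the constraint $y(T)=0$, and weak-$*$ lower semicontinuity of the norm. The divergence is in identifying the limit followers as an equilibrium for $g^*$. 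You pass to the limit directly in \emph{both} Nash inequalities, using that for fixed $u_1$ the states $y(T;y_0,g_n,u_1,u_{2,n})$ also converge strongly — in effect you re-run the closed-graph argument of Step 4 of the proof of Theorem \ref{Intro-3}, now with the leader varying as well. The paper instead invokes its structure theory: it extracts a subsequence of equilibria lying entirely in one piece of the decomposition $N=N_0\cup N_1\cup N_2$, and observes that under the null constraint the adjoint terminal data in Remark \ref{N0} and Propositions \ref{pro2}--\ref{pro4} reduce to the fixed targets $y_T^i$, so the follower controls are explicit bang-bang functions \emph{independent of $n$}; the identification of the limit is then immediate and explicit. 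Your route is more elementary and more general (no case analysis, no bang-bang formulas, it would work for any number of followers), but the paper's route buys the explicit form of the limit equilibria, which is not cosmetic: it is reused in Step 3 of the proof of Theorem \ref{9.15.1}, where the paper cites ``from the proof of Lemma \ref{lemma-g-exist}'' to conclude $u_i^*(\bar g)=u_i^*(g^*)$. One point you should state more transparently: your phrase ``which I take as $(u_1^*(g^*),u_2^*(g^*))$'' silently modifies the given selection $g\mapsto(u_1^*(g),u_2^*(g))$ at the point $g^*$, whereas problem {\bf(P2)} and the lemma are formulated relative to a fixed selection. This is a real subtlety, but not a defect peculiar to your argument — the paper's own identification of $u_i^*$ with $u_i^*(g^*)$ implicitly presupposes that the selected equilibrium at $g^*$ is feasible and lies in the same piece $N_j$, so both proofs in effect treat the selection as the canonical (bang-bang, respectively limiting) one.
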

The main result of the norm optimal control problem {\bf(P2)} is stated as follows.
\begin{theorem}\label{9.15.1}
Suppose $(A_1)$ and $(A_2)$ hold.		Let $y_0\in L^2(0,1)$, $T>0$. Let $(u_1^*(g^*),u_2^*(g^*))$ be given with respect to $g^*$ in Lemma \ref{lemma-g-exist} (i.e., $g^*$ is the optimal control for the problem {\bf (P2)}), and
		\begin{align*}
		J(\chi_\omega z)
		=&\frac{1}{2}\left[\int_0^T\left\| \chi_\omega z(t)\right\| dt\right]^2+\left\langle y_0, z(0)\right\rangle+\int_0^T\int_0^1\sum_{i=1}^2\chi_{\omega_i}u_i^*(g^*)z(t)dxdt\nonumber\\
		=&\frac{1}{2}\|\chi_\omega z\|_{L^1(0,T;L^2(0,1))}^2+\langle y_0,z(0)\rangle+\int_0^T\int_0^1\sum_{i=1}^2\chi_{\omega_i}u_i^*(g^*)z(t)dxdt,
\end{align*}
		where $\chi_\omega z\in Z_T$. Denote  a variational problem
		\begin{equation*}
		V(T,y_0)=\inf_{\chi_\omega z\in Z_T} J(\chi_\omega z).
		\end{equation*}
Then there exists a $\chi_\omega z_*\in Z_T$ such that
$$
V(T,y_0)=J(\chi_\omega z_*).
$$
Furthermore, we have the following conclusions:
 \begin{itemize}
     \item [ ($I$).] If $\chi_\omega z_*\neq0$, then
     $$
g^*(t)=\frac{\chi_\omega z_*(t)}{\|\chi_\omega z_*(t)\|}\|\chi_\omega z_*\|_{L^1(0,T; L^2(0,1))} ,\mbox{ for a.e. } t\in (0,T).
$$
     \item [ ($II$).] If $\chi_\omega z_*=0$, then
     $$
g^*(t)=0,\mbox{ for a.e. } t\in (0,T).
$$
 \end{itemize}
 Moreover,
		\begin{equation*}
		V(T,y_0)=-\frac{1}{2}N(T,y_0)^2.
		\end{equation*}
	\end{theorem}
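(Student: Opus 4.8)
The plan is to read the minimization $V(T,y_0)=\inf_{\chi_\omega z\in Z_T}J(\chi_\omega z)$ as a convex, coercive variational problem whose optimality condition reproduces the stated representation of $g^*$ and whose optimal value is $-\frac{1}{2}N(T,y_0)^2$. The first step is to linearize $J$. Pairing the state system \eqref{Intro-1}, with the followers frozen at $u_i^*(g^*)$ and with $y(T;y_0,g^*,u_1^*(g^*),u_2^*(g^*))=0$ (possible by $(A_2)$ and Lemma \ref{lemma-g-exist}), against a solution $z$ of the adjoint system \eqref{eq:adjoint}, and using the self-adjointness of $A$, yields
\begin{equation*}
\langle y_0,z(0)\rangle+\int_0^T\!\!\int_0^1\sum_{i=1}^2\chi_{\omega_i}u_i^*(g^*)\,z\,dx\,dt=-\int_0^T\langle\chi_\omega g^*(t),z(t)\rangle\,dt.
\end{equation*}
Thus the affine part of $J$ is the single continuous linear functional $\ell(\chi_\omega z):=-\langle\chi_\omega g^*,\chi_\omega z\rangle_{L^2(0,T;L^2(0,1))}$ and $J(\chi_\omega z)=\frac{1}{2}\|\chi_\omega z\|_{L^1(0,T;L^2(0,1))}^2+\ell(\chi_\omega z)$. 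Applying Theorem \ref{thm:obs-inq} to the time-reversed adjoint state over $D=\omega\times(0,T)$ gives $\|z(0)\|\le C\int_{\omega\times(0,T)}|z|\le C|\omega|^{1/2}\|\chi_\omega z\|_{L^1(0,T;L^2(0,1))}$, and $(A_1)$ gives $\|\chi_{\omega_i}z(t)\|\le\|\chi_\omega z(t)\|$; together these show $\ell$ is bounded and that $J(\chi_\omega z)\ge\frac{1}{2}r^2-Cr$ with $r=\|\chi_\omega z\|_{L^1(0,T;L^2(0,1))}$, so $J$ is coercive and $V(T,y_0)$ is finite.

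The existence of a minimizer is the main obstacle, because $L^1(0,T;L^2(0,1))$ is not reflexive and coercivity does not by itself give weak compactness of minimizing sequences. I would construct the minimizer from the primal optimizer $g^*$ of Lemma \ref{lemma-g-exist}. Since $g^*$ realizes $N(T,y_0)$, the state $b:=-e^{AT}y_0-\int_0^T e^{A(T-t)}\sum_i\chi_{\omega_i}u_i^*(g^*)\,dt$ is a boundary point of the convex set of final states reachable by leaders of $L^\infty(0,T;L^2(0,1))$-norm at most $N(T,y_0)$; a supporting-hyperplane (Hahn--Banach) argument in $L^2(0,1)$ then yields a nonzero terminal datum $\zeta$, whose backward evolution $z_*(t)=e^{A(T-t)}\zeta$ satisfies the Pontryagin alignment $\langle\chi_\omega g^*(t),\chi_\omega z_*(t)\rangle=\|g^*(t)\|\,\|\chi_\omega z_*(t)\|$ for a.e.\ $t$. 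After rescaling so that $\|\chi_\omega z_*\|_{L^1(0,T;L^2(0,1))}=N(T,y_0)$, this $\chi_\omega z_*$ is the minimizer of $J$ over $Z_T$. The role of $Z_T$, rather than $X_T$, is to be closed under precisely this construction (and, in a purely variational proof, under the weak-$*$ limits of minimizing sequences taken in the bidual), its defining local semigroup property being stable under such limits; this is how non-reflexivity is circumvented.

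With the minimizer $\chi_\omega z_*$ at hand, the representation of $g^*$ follows from the first-order condition $0\in\partial J(\chi_\omega z_*)$, that is $\chi_\omega g^*\in\|\chi_\omega z_*\|_{L^1(0,T;L^2(0,1))}\,\partial\|\cdot\|_{L^1(0,T;L^2(0,1))}(\chi_\omega z_*)$. If $\chi_\omega z_*\neq0$, then $\chi_\omega z_*(t)\neq0$ for a.e.\ $t$ and the subdifferential of the $L^1(0,T;L^2(0,1))$-norm is the single field $t\mapsto\chi_\omega z_*(t)/\|\chi_\omega z_*(t)\|$, which gives conclusion $(I)$; if $\chi_\omega z_*=0$, then $V(T,y_0)=J(0)=0$, forcing $N(T,y_0)=0$ and hence $g^*=0$, which is conclusion $(II)$. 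For the optimal value, substituting the optimality relation into $J$ gives $J(\chi_\omega z_*)=\frac{1}{2}\|\chi_\omega z_*\|_{L^1(0,T;L^2(0,1))}^2-\|\chi_\omega z_*\|_{L^1(0,T;L^2(0,1))}^2=-\frac{1}{2}\|\chi_\omega z_*\|_{L^1(0,T;L^2(0,1))}^2$; since $(I)$ makes $\|g^*(t)\|\equiv\|\chi_\omega z_*\|_{L^1(0,T;L^2(0,1))}$ a.e., we get $\|g^*\|_{L^\infty(0,T;L^2(0,1))}=\|\chi_\omega z_*\|_{L^1(0,T;L^2(0,1))}=N(T,y_0)$, whence $V(T,y_0)=-\frac{1}{2}N(T,y_0)^2$. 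A quick consistency check comes from the homogeneity of $J$: minimizing $\frac{1}{2}r^2+\ell$ along each ray first in $r$ gives $\inf J=-\frac{1}{2}\|\ell\|_{(Z_T)^*}^2$, and the duality between problem \textbf{(P2)} and this supremum identifies $\|\ell\|_{(Z_T)^*}=N(T,y_0)$.
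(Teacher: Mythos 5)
Your architecture is genuinely different from the paper's and, where it works, is arguably cleaner: the paper runs the direct method on $J$ (existence of a minimizer $\chi_\omega z_*$ is asserted from convexity, continuity and coercivity in Step 1 --- tersely, given that $L^1(0,T;L^2(0,1))$ is not reflexive), derives the Euler--Lagrange equation, builds a candidate control $\bar g$ from $z_*$, and then proves that $\bar g$ is admissible and optimal for {\bf(P2)}, so that $g^*$ may be taken of the stated form; you run the construction in the opposite direction, producing $z_*$ from $g^*$ by a supporting-hyperplane argument and verifying that it attains the value $-\frac{1}{2}N(T,y_0)^2$, which settles existence, conclusion $(I)$, and the value identity in one stroke. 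Your linearization of the affine part of $J$, the coercivity via the observability inequality and $(A_1)$, and the Cauchy--Schwarz equality argument are all sound.

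However, there is a genuine gap at the heart of your construction: the claim that $b$ is a boundary point of the reachable set $R(N)=\left\{\int_0^T e^{A(T-t)}\chi_\omega g\,dt:\|g\|_{L^\infty(0,T;L^2(0,1))}\le N(T,y_0)\right\}$. Your justification is ``since $g^*$ realizes $N(T,y_0)$,'' but the constraint in {\bf(P2)} is $y(T;y_0,g,u_1^*(g),u_2^*(g))=0$, where the followers are the Nash equilibria \emph{associated with} $g$, not frozen. If $b$ were interior to $R(N)$, you would obtain a leader $g'$ with $\|g'\|_{L^\infty(0,T;L^2(0,1))}<N(T,y_0)$ and $y(T;y_0,g',u_1^*(g^*),u_2^*(g^*))=0$, i.e.\ null control with the \emph{frozen} followers $u_i^*(g^*)$; this contradicts the definition of $N(T,y_0)$ only if $u_i^*(g')=u_i^*(g^*)$, that is, only if the frozen pair is still a Nash equilibrium with respect to the new leader $g'$. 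This is exactly the point the paper must (and does) address in Step 3 of its proof, by invoking the proof of Lemma \ref{lemma-g-exist} together with Remark \ref{remark-N1-N2}: under $(A_2)$, every leader steering the system to rest has follower equilibria determined by $y_T^1,y_T^2$ through fixed adjoint solutions, hence independent of the leader. Your proof needs this same input, stated explicitly, before the separation argument is legitimate. A secondary, fixable slip: you deduce $(I)$ from $0\in\partial J(\chi_\omega z_*)$ and the single-valuedness of $\partial\|\cdot\|_{L^1(0,T;L^2(0,1))}$ at a nowhere-vanishing point, but since the minimization is over the proper subspace $Z_T$, the first-order condition only identifies $\chi_\omega g^*$ as a linear functional \emph{on} $Z_T$, not almost everywhere as a function; the pointwise identity should instead be read off from the Cauchy--Schwarz equality case in your own separation step (which you do have), so the subdifferential detour should be dropped.
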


\section{The proof of Theorem \ref{thm:obs-inq}}
The equation (\ref{eq:main}) will be referred to as the controlled system in our discussion, where the control in the form of $\chi_\omega u$ is applied on the right-hand side of the equation. Here, $u\in L^2(0,T;L^2(I))$ represents a function, and $\omega$ denotes a subset of $I$.

Let us recall an important result that will be used later, see Section 3.4 in \cite{buffe2018spectral}.
\begin{lemma}
\label{lemma-ob}
Let $\alpha\in(0,2)$ and $\omega_0\subset I$ be an open subset. Let $\mu\in(0,1)$ be defined in (\ref{u}). Then there exists a positive constant $C=C(\alpha,\omega_0,I,\mu)$ such that the solution of (\ref{eq:main})  satisfies the following observability inequality:
\begin{equation}
\label{open-ob}
\int_I|y(x,T)|^2dx\leq Ce^{CT^{-\frac{\mu}{1-\mu}}}\int_0^T\int_{\omega_0}|y(x,t)|^2dxdt,\,\forall\,y_0\in L^2(I).
\end{equation}
\end{lemma}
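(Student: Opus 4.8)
The plan is to prove \eqref{open-ob} by the Lebeau--Robbiano strategy: combine a spectral inequality for the degenerate operator $A$ with the dissipation of the parabolic semigroup, and then sum over a geometric family of frequency/time scales via the telescoping series method. First I would record the spectral structure of $A$. Since $-A$ is a self-adjoint, nonnegative operator on $L^2(I)$ with compact resolvent (see \cite{cannarsa2008carleman}), there is a nondecreasing sequence of eigenvalues $0<\lambda_1\le\lambda_2\le\cdots\to+\infty$ and an associated orthonormal basis $\{\phi_k\}_{k\ge1}$ of $L^2(I)$ with $A\phi_k=-\lambda_k\phi_k$. Writing $y_0=\sum_k a_k\phi_k$, the solution of \eqref{eq:main} is $y(t)=\sum_k a_k e^{-\lambda_k t}\phi_k$. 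For $\Lambda>0$ let $\Pi_\Lambda$ denote the orthogonal projection onto $\mathrm{span}\{\phi_k:\lambda_k\le\Lambda\}$.

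The two analytic ingredients are as follows. First, the spectral inequality from Section~3.4 of \cite{buffe2018spectral}: there exist $C_0,c_0>0$, depending only on $\alpha,\omega_0,I,\mu$, such that for every $\Lambda>0$ and every $u\in\mathrm{Ran}\,\Pi_\Lambda$,
\begin{equation*}
\|u\|^2\le C_0 e^{c_0\Lambda^{\mu}}\int_{\omega_0}|u(x)|^2\,dx.
\end{equation*}
This is where the value of $\mu$ in \eqref{u} enters, and it reflects the degeneracy of $A$ (with the borderline case $\alpha=1$ forcing the $\gamma$-dependent exponent). Second, the high-frequency dissipation: since $e^{-\lambda_k(t-s)}\le e^{-\Lambda(t-s)}$ whenever $\lambda_k>\Lambda$ and $t\ge s$, one has $\|(I-\Pi_\Lambda)y(t)\|\le e^{-\Lambda(t-s)}\|(I-\Pi_\Lambda)y(s)\|$. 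I would take ingredient one directly from \cite{buffe2018spectral} and derive the rest.

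With these in hand I would run the telescoping iteration. Fix a ratio $\rho>1$ and set $\Lambda_k=\rho^{k}$; partition a subinterval of $(0,T)$ into consecutive pieces $(t_{k+1},t_k)$ whose lengths $\tau_k=t_k-t_{k+1}$ shrink geometrically and sum to at most $T$. On the piece at scale $\Lambda_k$, I would apply the spectral inequality to $\Pi_{\Lambda_k}y(t)$ (integrating in $t$ over the piece, where the observation $\int_{\omega_0}|y|^2$ is available) and control $\Pi_{\Lambda_k}y$ against the full $y(t)$ using the dissipation bound for the complementary high frequencies. This yields a recursion relating $\|y(t_k)\|$ to the observation term plus a factor $e^{c_0\Lambda_k^\mu}e^{-\Lambda_k\tau_k}\|y(t_{k+1})\|$. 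The crux is to choose $\tau_k\sim\Lambda_k^{-(1-\mu)}$ so that the dissipation factor $e^{-\Lambda_k\tau_k}\sim e^{-\Lambda_k^{\mu}}$ dominates the spectral growth $e^{c_0\Lambda_k^{\mu}}$; summing $\tau_k\sim\rho^{-k(1-\mu)}$ (convergent since $1-\mu>0$) gives a total time controlled by $T$, and tracking the accumulated constants produces exactly the prefactor $Ce^{CT^{-\mu/(1-\mu)}}$. Using that $\|y(t)\|$ is nonincreasing in $t$, the final value $\|y(T)\|$ is bounded by $\|y(t_1)\|$, closing the estimate.

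The main obstacle is the balancing step: one must calibrate the frequency ratio $\rho$, the time-slice lengths $\tau_k$, and the additive and multiplicative constants so that, after telescoping across all scales, the low-frequency gains beat the high-frequency losses uniformly and the resulting prefactor collapses to a single $Ce^{CT^{-\mu/(1-\mu)}}$ rather than a divergent product. Keeping the constant independent of $y_0$ while extracting the sharp power $\mu/(1-\mu)$ requires careful geometric-series bookkeeping; the spectral inequality with its degeneracy-adapted exponent $\mu$ is the other delicate point, but that I would import directly from \cite{buffe2018spectral} rather than re-derive.
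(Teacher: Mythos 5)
The paper never proves Lemma \ref{lemma-ob} itself: it is imported verbatim from Section~3.4 of \cite{buffe2018spectral}, and your sketch reconstructs precisely the argument underlying that citation, namely the Lebeau--Robbiano iteration combining the degeneracy-adapted spectral inequality $\|u\|^2\le C_0e^{c_0\Lambda^{\mu}}\int_{\omega_0}|u|^2dx$ on $\mathrm{Ran}\,\Pi_\Lambda$ with the high-frequency decay $\|(I-\Pi_\Lambda)y(t)\|\le e^{-\Lambda(t-s)}\|(I-\Pi_\Lambda)y(s)\|$ and a telescoping sum over geometric scales calibrated by $\tau_k\sim\Lambda_k^{-(1-\mu)}$. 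Your outline is correct, including the balancing that produces the cost $e^{CT^{-\mu/(1-\mu)}}$ and the use of the contraction property to pass from $\|y(t_1)\|$ to $\|y(T)\|$, so it matches the approach the paper relies on (the only pedantic caveat, inherited from the paper's own statement of \eqref{u} rather than from your argument, is that $\mu=3/(2\gamma)\in(0,1)$ in the case $\alpha=1$ actually forces $\gamma\in(3/2,2)$).
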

Next, we aim to quantify the real analyticity of the solution $y$ of equation \eqref{eq:main} on the subdomain that is away from the singularity point zero.
\begin{lemma}
\label{lemma-analytic}
Set $\alpha\in(0,2)$. Let $\omega$ be a subdomain of $I$ with $0\not\in \bar{\omega}$. Then there are positive constants $C=C(\alpha,I,\omega)\geq1$ and $\rho=\rho(\alpha,I,\omega)$, $0<\rho\leq1$, such that when $x\in\omega$, $\forall\,0\leq s< t$, $a\in\N$ and $\gamma\in\N$, the solution of equation (\ref{eq:main}) satisfies
\begin{equation}
\label{analytic-1}
|\partial_x^a\partial_t^\gamma y(x,t)|\leq \dfrac{Ce^{\frac{C}{t-s}}a!\gamma!}{\rho^a((t-s)/2)^\gamma}\|y(\cdot,s)\|.
\end{equation}
\end{lemma}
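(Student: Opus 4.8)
The plan is to prove interior analyticity estimates by treating the degenerate equation $y_t = (x^\alpha y_x)_x$ as a uniformly parabolic equation with smooth coefficients on the subdomain $\omega$, since $0 \notin \bar\omega$ forces $x \geq c > 0$ there, so the leading coefficient $x^\alpha$ is bounded below and real analytic. I would first fix a slightly larger open set $\omega'$ with $\bar\omega \subset \omega'$ and still $0 \notin \bar{\omega'}$, and restrict attention to a parabolic neighborhood inside $\omega' \times (s,t]$. The standard strategy for such analyticity bounds is to prove simultaneous Gevrey-type estimates on mixed space-time derivatives by induction, controlling $\partial_x^a \partial_t^\gamma y$ in terms of the factorials $a!\,\gamma!$ with the geometric radii $\rho^{-a}$ (spatial analyticity radius) and $((t-s)/2)^{-\gamma}$ (the parabolic scaling $\partial_t \sim \partial_x^2$, which accounts for why each time derivative costs a factor of $(t-s)^{-1}$).

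The key steps, in order, would be: first, establish the interior $L^2\to L^\infty$ smoothing estimate, namely that $\|y(\cdot,t)\|_{L^\infty(\omega')} \leq C e^{C/(t-s)}\|y(\cdot,s)\|$, using standard interior parabolic regularity (e.g. Nash–Moser iteration or the explicit heat-kernel-type bounds available once the operator is uniformly elliptic on $\omega'$); the exponential factor $e^{C/(t-s)}$ is exactly the price of the smoothing as $t \downarrow s$. Second, differentiate the equation in $x$ and in $t$ repeatedly: because the coefficients are analytic on $\omega'$, the derivatives $\partial_x^a \partial_t^\gamma y$ solve parabolic equations with controlled lower-order terms, and one sets up a Cauchy-estimate / majorant-series induction on the sum $a + 2\gamma$, using nested cylinders that shrink as the derivative order grows. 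Third, I would package the induction so that the radii of the shrinking cylinders in space and time produce exactly the claimed bound $\frac{C e^{C/(t-s)} a!\,\gamma!}{\rho^a\,((t-s)/2)^\gamma}\|y(\cdot,s)\|$, where $\rho \leq 1$ is the spatial analyticity radius determined by the distance from $\omega$ to the singularity and the analyticity radius of $x^\alpha$.

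The main obstacle I expect is bookkeeping in the combinatorial induction: producing the correct factorial growth $a!\,\gamma!$ rather than a weaker bound like $(a+2\gamma)!$ requires a careful choice of weighted norms (Gevrey classes with the parabolic weight assigning weight $2$ to each time derivative) and a sharp treatment of the Leibniz-rule terms coming from differentiating the analytic coefficient $x^\alpha$ and its derivatives. A cleaner route, which I would favor if available, is to invoke an off-the-shelf space-time analyticity theorem for uniformly parabolic equations with analytic coefficients — precisely the type of quantitative estimate developed in the reference \cite{escauriaza2015observation} — and simply verify its hypotheses on $\omega'$, since $x^\alpha$ is uniformly elliptic and analytic away from $x=0$; this reduces the whole lemma to checking uniform ellipticity and the analyticity modulus of the coefficient on $\omega'$, then absorbing constants. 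Either way, the degeneracy at $x=0$ never enters because $\omega$ stays a positive distance from it, so the entire difficulty of the degenerate operator is sidestepped on the interior region.
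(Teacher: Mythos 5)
There is a genuine gap, and it is not the ``bookkeeping'' issue you flag: the time factor $\gamma!\,((t-s)/2)^{-\gamma}$ in \eqref{analytic-1} asserts \emph{analyticity in time}, and time-analyticity is not a local property of parabolic equations. Purely interior estimates on $\omega'\times(s,t]$ --- nested cylinders, Gevrey norms with parabolic weights, or any other local induction --- can at best yield Gevrey-$2$ regularity in time, i.e.\ bounds of order $(\gamma!)^2$ (equivalently $(2\gamma)!$), and this is sharp: for instance $u(x,t)=\sum_{k\ge 0} g^{(k)}(t)\,x^{2k}/(2k)!$ with $g$ Gevrey-$2$ but not analytic is a local solution of the one-dimensional heat equation that is not analytic in $t$. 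So no choice of weighted norms can make your induction close with the factor $\gamma!$; the weaker $(a+2\gamma)!$-type bound you mention is the true ceiling for local arguments. Your fallback does not repair this: the quantitative space--time analyticity estimates of \cite{escauriaza2015observation} are proved for \emph{global} solutions of the initial--boundary value problem, where the boundary conditions enter the iteration scheme; they cannot be ``verified on $\omega'$'' because $y$ restricted to $\omega'$ satisfies no boundary conditions on $\partial\omega'$ (it is merely a local solution), while on the full domain $I$ the uniform ellipticity hypothesis fails at $x=0$.

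This is exactly why the paper does not argue locally. Its proof uses the global structure of the solution in two ways. First, time derivatives are controlled through the spectral expansion $y(\cdot,t)=\sum_k a_k e^{-w_k^2 t}e_k$ of the semigroup solution on all of $I$: the elementary bound $\max_{k}\{w_k^{4\gamma}e^{-w_k^2t}\}\le C(2/t)^{2\gamma}(\gamma!)^2$ (via Stirling) is what produces the factor $\gamma!\,(t/2)^{-\gamma}$, and it is available only because $y$ is a superposition of decaying exponentials, not a general local solution. Second, spatial analyticity is obtained by a lifting trick: the auxiliary function $\sum_k a_k e^{-w_k^2 t + w_k\tau}e_k(x)$ solves the \emph{elliptic} equation $\partial_\tau^2 v+(x^\alpha v_x)_x=0$ in $(x,\tau)$, and interior elliptic analyticity estimates --- which, unlike parabolic time-analyticity, are genuinely local --- give the factor $a!/\rho^a$ on a ball around $(x_0,0)$ avoiding $x=0$. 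So your instinct that the degeneracy can be sidestepped is valid only for the spatial part of the estimate; the degenerate operator enters irreducibly through its global spectral decomposition, and any correct proof must use that (or an equivalent analytic-semigroup property on $I$) to obtain the time factor claimed in the lemma.
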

\begin{proof}
We borrow some ideas from \cite{liu2017observability}. We only need to prove the situation of $s=0$. Let $\{e_k\}_{k\geq1}$ and $\{w_k^2\}_{k\geq1}$ be respectively the sets of $L^2(I)$-normalized eigenfunctions and eigenvalues for $-\frac{\partial}{\partial x}\bigg(x^\alpha\frac{\partial}{\partial x}\bigg)$ with zero lateral Dirichlet boundary conditions; i.e.,
\begin{equation}
\label{lemma-analytic-eq1}
\left\{
\begin{array}{ll}
\big(x^\alpha (e_k(x))_x\big)_x+w^2_ke_k(x)=0, & \text{in}~\,\, I,   \\[2mm]
e_k =BC_\alpha(e_k) = 0, & \text{on}~\,\, \partial I,
\end{array}
\right.
\end{equation}
here, $0<w_1\leq w_2\leq\cdots\leq w_k\leq\cdots$ and $\lim_{k\rightarrow+\infty}w_k=+\infty$. Take $y_0=\sum_{k\geq1}a_ke_k$ with
\begin{equation}
\label{lemma-analytic-eq5}
\sum_{k\geq1}a_k^2<+\infty,
\end{equation}
and define
\begin{equation*}
y(x,\tau,t)=\sum_{k=1}^{+\infty} a_ke^{-w_k^2t+w_k\tau}e_k(x),\,\,\text{for}~ \,\,x\in I,\,\,t>0,\,\,\text{and}~\,\,\tau\in\R.
\end{equation*}
Then for all $t>0$ and $x\in I$,
$$
y(x,0,t)=\sum_{k=1}^{+\infty} a_ke^{-w_k^2t}e_k(x)=y(x,t)
$$
solves equation (\ref{eq:main}) with initial datum $y_0$ and
\begin{equation}
\label{lemma-analytic-eq2}
\partial_t^\gamma y(x,\tau,t)=\sum_{k\geq1} a_k(-w_k^2)^\gamma e^{-w_k^2t+w_k\tau}e_k(x),\,\,\text{for}~ \,\,x\in I,\,\,\gamma\in\N,\,\,\text{and}~\,\,\tau\in\R.
\end{equation}
Moreover, together with (\ref{lemma-analytic-eq1}), we have
\begin{equation}
\label{lemma-analytic-eq3}
\left\{
\begin{array}{ll}
\partial_\tau^2\big( \partial_t^\gamma y \big)+\frac{\partial}{\partial x}\bigg( x^\alpha\frac{\partial}{\partial x}\big( \partial_t^\gamma y \big) \bigg)=0, & \text{in}~\,\, I\times\R,   \\[2mm]
\partial_t^\gamma y=BC_\alpha(\partial_t^\gamma y)
 = 0, & \text{on}~\,\, \partial I\times\R.
\end{array}
\right.
\end{equation}
Because $0\not\in\bar{\omega}$, without loss of generality (otherwise use of a finite covering argument), we can assume that there exists
$R\leq\frac{1}{8}$, $x_0\in\omega$ such that
\begin{equation}\label{lemma-analytic-eq9}
\omega\subset D_R(x_0),\,D_{2R}(x_0)\subset I, ~\text{and}~D_{2R}(x_0)\cap D_R(0)=\emptyset,
\end{equation}
where $$D_R(x_0)=\{x\in\omega:|x-x_0|\leq R  \}.$$
By the last equality of (\ref{lemma-analytic-eq9}), it shows the function $x^\alpha$ is real analytic and non-degenerate in $D_{2R}(x_0,0)\subset I\times\R$. By the real analytic estimates of solutions to linear elliptic equations with real analytic coefficients (cf. for instance, \cite{apraiz2013null}, Chapter 3 in \cite{john2004plane}, and Chapter 5 in \cite{morrey2009multiple}), we have that there are constants $C=C(R,\alpha)\geq1$ and $\rho=\rho(R,\alpha)\in(0,1)$ such that any solutions to equation (\ref{lemma-analytic-eq3}) satisfies,
\begin{equation}
\label{lemma-analytic-eq4}
\|\partial_x^a\partial_t^\gamma y(\cdot,\cdot,t)\|_{L^\infty(D_R(x_0,0))}\leq\frac{Ca!}{\rho^a}\bigg( \int_{D_{2R}(x_0,0)} |\partial_t^\gamma y(x,\tau,t)|^2dxd\tau \bigg)^{\frac{1}{2}},
\end{equation}
where $a\in \N$, $\gamma\in\N$. Notice that for each $t > 0$,
\begin{equation}
\begin{split}
\label{lemma-analytic-eq10}
\int_{D_{2R}(x_0,0)} |\partial_t^\gamma y(x,\tau,t)|^2dxd\tau&\leq\int_{-2R}^{2R}\int_{D_{2R}(x_0)} |\partial_t^\gamma y(x,\tau,t)|^2dxd\tau\\
&\leq\int_{-2R}^{2R}\int_I |\partial_t^\gamma y(x,\tau,t)|^2dxd\tau.
\end{split}
\end{equation}
By the orthogonality of $\{e_k\}_{k\geq1}$ in $L^2(I)$, and \eqref{lemma-analytic-eq2}, we have
\begin{equation*}
\begin{split}
\int_I|\partial_t^\gamma y(x,\tau,t)|^2dx&=\int_I\bigg|\sum_{k\geq1} a_k(-w_k^2)^\gamma e^{-w_k^2t+w_k\tau}e_k(x)\bigg|^2dx\\
&=\sum_{k\geq1} a_k^2w_k^{4\gamma}e^{-2w_k^2t+2w_k\tau}\\
&=\sum_{k\geq1} a_k^2w_k^{4\gamma}e^{-w_k^2t}e^{-w_k^2t+2w_k\tau}\\
&\leq \max_{k\geq1}\{ w_k^{4\gamma}e^{-w_k^2t} \}\max_{k\geq1}\{ e^{-w_k^2t+2w_k\tau} \}\sum_{k\geq1} a_k^2,
\end{split}
\end{equation*}
which, along with \eqref{lemma-analytic-eq10}, it implies
\begin{equation}
\begin{split}
\label{lemma-analytic-eq11}
\int_{D_{2R}(x_0,0)} |\partial_t^\gamma y(x,\tau,t)|^2dxd\tau&\leq\max_{k\geq1}\{ w_k^{4\gamma}e^{-w_k^2t} \}\max_{k\geq1}\{ e^{-w_k^2t+4Rw_k} \}\sum_{k\geq1} a_k^2.
\end{split}
\end{equation}
On one hand, from Stirling's formula, see page 111 in \cite{krantz2002primer}, i.e. for a absolute constant $C$,
$\gamma^{\gamma}\leq Ce^{\gamma}(\gamma)!$, we have
\begin{equation*}
\max_{k\geq1}\{ w_k^{4\gamma}e^{-w_k^2t} \}\leq C\bigg(\frac{2}{t}\bigg)^{2\gamma}(\gamma!)^2.
\end{equation*}
On the other hand, we have
\begin{equation*}
\max_{k\geq1}\{ e^{-w_k^2t+4Rw_k} \}=e^{\frac{4R^2}{t}}.
\end{equation*}
Therefore, it stands
\begin{equation}
\label{lemma-analytic-eq6}
\int_{D_{2R}(x_0,0)} |\partial_t^\gamma y(x,\tau,t)|^2dxd\tau\leq C\bigg(\frac{2}{t}\bigg)^{2\gamma}(\gamma!)^2 e^{\frac{4R^2}{t}}\sum_{k\geq1} a_k^2.
\end{equation}
This together with (\ref{lemma-analytic-eq4}), it implies
$$
\|\partial_x^a\partial_t^\gamma y(\cdot,t)\|_{L^\infty(D_R(x_0,0))}\leq\frac{Ce^{\frac{2R^2}{t}}a!\gamma!}{\rho^a(t/2)^\gamma}\|y_0\|,
$$
which, along with the first result of (\ref{lemma-analytic-eq9}), we have
$$
\|\partial_x^a\partial_t^\gamma y(\cdot,t)\|_{L^\infty(\omega)}\leq\frac{Ce^{\frac{2R^2}{t}}a!\gamma!}{\rho^a(t/2)^\gamma}\|y_0\|.
$$
Hence, the above inequality implies that (\ref{analytic-1}) with $s=0$ holds. The proof is completed.
\end{proof}

The subsequent two lemmas focus on the propagation of smallness estimates from measurable sets for real analytic functions. This type of observability estimate was initially established in \cite{vessella1999continuous} (also refer to \cite{apraiz2013null} and \cite{apraiz2014observability} for associated findings). To facilitate subsequent references, one lemma pertains to the one-dimensional scenario, while the other lemma caters to the multidimensional case. For a more streamlined proof, interested readers can refer to Section 3 of \cite{apraiz2014observability}.
\begin{lemma}
\label{lemma-measurable-1}
Let $g:[a,a+s]\rightarrow\R$, where $a\in\R$ and $s>0$ be a real analytic function satisfying
\begin{equation*}
\left| \frac{d^k}{dx^k}g(x) \right|\leq Mk!(s\rho)^{-k},\forall\,x\in[a,a+s],\,\,\forall\,k\in\N,
\end{equation*}
where $M>0$, $\rho\in(0,1]$. Assume that $F\subset[a,a+s]$ is a measurable subset of positive measure. Then there are $C=C(\rho, |F|/s)$ and $\theta=\theta(\rho, |F|/s)$, $0 <\theta<1$, such that
\begin{equation*}
\|g\|_{L^\infty(a,a+s)}\leq CM^{1-\theta} \bigg(\frac{1}{|F|} \int_F|g(x)|dx \bigg)^{\theta}.
\end{equation*}
\end{lemma}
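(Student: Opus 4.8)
The plan is to pass to the holomorphic extension of $g$ and then combine two propagation mechanisms: a logarithmic-convexity (three-circle) step that transports information across the whole interval, and a Remez-type step that converts the integral over the measurable set $F$ into a sup-norm on a genuine subinterval. First I would rescale. Setting $h(t):=g(a+st)$ for $t\in[0,1]$ turns the hypothesis into $|h^{(k)}(t)|\le Mk!\rho^{-k}$ for all $t\in[0,1]$ and all $k\in\N$, while the set $E:=\{t\in[0,1]:a+st\in F\}$ satisfies $|E|=|F|/s$. Since the asserted constants may depend only on $\rho$ and $|F|/s=|E|$, it suffices to prove
\[
\|h\|_{L^\infty(0,1)}\le CM^{1-\theta}\Big(\tfrac{1}{|E|}\int_E|h|\,dt\Big)^{\theta}.
\]
The derivative bound has the standard consequence that the Taylor series of $h$ at any $t_0\in[0,1]$ converges on the complex disk $\{|z-t_0|<\rho\}$ with $|h(z)|\le M\big(1-|z-t_0|/\rho\big)^{-1}$ there; in particular $h$ extends holomorphically to the complex neighbourhood $\Omega:=\bigcup_{t_0\in[0,1]}\{|z-t_0|<\rho/2\}$ of the segment $[0,1]$, with $|h|\le 2M$ on $\Omega$. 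This complexification is what makes complex-analytic tools available.

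Next I would carry out the interval-to-interval propagation. Fixing a subinterval $J\subset[0,1]$, and using $|h|\le 2M$ on $\Omega$ together with Hadamard's three-circle theorem (equivalently, the logarithmic convexity of $r\mapsto\max_{|z-t_*|=r}|h(z)|$) along a finite chain of overlapping disks joining $J$ to an arbitrary point of $[0,1]$, one obtains an estimate of the form
\[
\|h\|_{L^\infty(0,1)}\le C\,(2M)^{1-\theta_0}\,\|h\|_{L^\infty(J)}^{\theta_0},
\]
with $C$ and $\theta_0\in(0,1)$ depending only on $\rho$ and $|J|$, since the number of disks in the chain, and hence $\theta_0$, is controlled by $\rho$ and the length of $J$.

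It then remains to pass from the measurable set $E$ to a subinterval, which is the technical heart of the argument. I would select a subinterval $J\subset[0,1]$ of length comparable to $|E|$ in which $E$ occupies a fixed positive fraction (for instance taking $J$ around a Lebesgue density point of $E$, so that $|E\cap J|\ge\tfrac12|J|$), and then invoke a Remez-type inequality for the analytic function $h$: because $h$ is the restriction of a function holomorphic and bounded by $2M$ on the fixed-size neighbourhood $\Omega$, it behaves on $J$ like a polynomial of controlled degree, and one gets
\[
\|h\|_{L^\infty(J)}\le C\,(2M)^{1-\theta_1}\Big(\tfrac{1}{|E|}\int_E|h|\,dt\Big)^{\theta_1},
\]
with $C$ and $\theta_1$ depending only on $\rho$ and $|E|$.

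Chaining this with the three-circle estimate of the preceding paragraph and setting $\theta:=\theta_0\theta_1$ gives exponent $1-\theta_0+(1-\theta_1)\theta_0=1-\theta$ on the factor $2M$ and exponent $\theta$ on the integral term, which after absorbing $2^{1-\theta}$ into the constant yields the assertion. The main obstacle is precisely the last step: one must make the \emph{almost polynomial} comparison rigorous and, crucially, keep both the constant $C$ and the exponent $\theta_1$ dependent only on $\rho$ and the relative measure $|E|=|F|/s$, uniformly in $M$. This is exactly where the boundedness of the holomorphic extension on a neighbourhood of fixed width $\rho/2$ — itself a consequence of the Cauchy estimates with the same $\rho$ — is essential. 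For the quantitative details I would follow the streamlined treatment in Section 3 of \cite{apraiz2014observability}.
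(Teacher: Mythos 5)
You should know at the outset that the paper contains no proof of Lemma~\ref{lemma-measurable-1}: it is quoted from the literature (\cite{vessella1999continuous}, \cite{apraiz2013null}), with the reader directed to Section~3 of \cite{apraiz2014observability} for a streamlined proof. Your outline follows exactly that standard route --- holomorphic extension of the rescaled function to a complex neighbourhood of width $\sim\rho$ via the Taylor/Cauchy bounds, propagation along a chain of disks by Hadamard's three-circle theorem, and a Remez-type step (polynomial truncation of the Taylor series plus the classical polynomial Remez inequality, or equivalently a two-constants/harmonic-measure argument) to handle the measurable set. The rescaling is correct, the extension bound $|h|\le 2M$ on the $\rho/2$-neighbourhood is correct, and the exponent bookkeeping $1-\theta_0+\theta_0(1-\theta_1)=1-\theta_0\theta_1$ is right. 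Since you ultimately defer the quantitative core to the same reference the paper itself cites, your proposal is essentially at parity with the paper's treatment.

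There is, however, one step that, as written, fails to deliver the uniformity the lemma demands. You select $J$ around a Lebesgue density point of $E$ so that $|E\cap J|\ge\tfrac12|J|$ with $|J|$ comparable to $|E|$. The density theorem only guarantees density $\ge\tfrac12$ at scales below some $\delta>0$ that depends on the \emph{structure} of $E$, not on $|E|$ alone: for $E=\bigcup_{k=0}^{n-1}[k/n,\,k/n+|E|/n]$ with $n$ large, every interval of length $|E|/2$ has density about $|E|$, and density $\ge \tfrac12$ is achieved only at scales $\sim |E|/n\to 0$. This matters because your three-circle exponent $\theta_0$ degenerates as $|J|\to 0$ (the first ratio in the chain is of order $\log 2/\log(\rho/|J|)$), so the constants your argument produces would depend on $E$ itself rather than only on $\rho$ and $|F|/s$, contradicting the statement being proved. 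The fix is standard: partition $[0,1]$ into intervals of equal length $\sim\min\{|E|,\rho\}/2$; by pigeonhole at least one of them, $J$, satisfies $|E\cap J|\ge |E|\,|J|$. A relative density of $|E|$ (rather than $\tfrac12$) is perfectly adequate, because the Remez-type exponent $\theta_1$ is allowed to depend on $|E|=|F|/s$, and now $|J|$ is bounded below in terms of $\rho$ and $|E|$, so $\theta_0$ is under control. With this replacement --- and a Chebyshev selection $E'=\{t\in E\cap J:\ |h(t)|\le \frac{2}{|E\cap J|}\int_{E\cap J}|h|\,dt\}$, $|E'|\ge\tfrac12|E\cap J|$, to convert the $L^1$ average into a sup before applying the Remez step --- your argument goes through and yields constants depending only on $\rho$ and $|F|/s$, as required.
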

\begin{lemma}
\label{lemma-measurable}
Let $\omega$ be a bounded domain in $\R^n,n\geq1$ and $\bar{\omega}\subset\omega$ be a measurable set of positive measure. Let $f$ be an analytic function in $\omega$ satisfying
\begin{equation*}
|\partial_x^af(x)|\leq M \rho^{-|a|}|a|!,\,\,\forall\,x\in\omega,\,\,\forall\,a\in\N^n,
\end{equation*}
where $M>0$, $\rho\in(0,1]$. Then there are $C=C(|\omega|, \rho, |\bar{\omega}|)$ and $\theta=\theta(|\omega|, \rho, |\bar{\omega}|)$, $0 <\theta<1$, such that
\begin{equation*}
\|f\|_{L^\infty(\omega)}\leq CM^{1-\theta} \bigg( \int_{\bar{\omega}}|f(x)|dx \bigg)^{\theta}.
\end{equation*}
\end{lemma}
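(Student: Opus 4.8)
The plan is to reduce the $n$-dimensional estimate to the one-dimensional Lemma \ref{lemma-measurable-1} by a slicing argument and an induction on the dimension, after first localizing to a cube on which $\bar\omega$ has controlled relative density and then globalizing by propagation of smallness. Concretely, I would first invoke the Lebesgue density theorem to select a closed cube $Q_0\subset\omega$, whose side length and location depend only on $\bar\omega$, such that $|\bar\omega\cap Q_0|\geq\beta|Q_0|$ for some fixed fraction $\beta\in(0,1]$. The statement then splits into a local interpolation estimate on $Q_0$ and a globalization step to $\omega$.

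The local estimate I would establish by induction on $n$: whenever $E\subset Q$ is measurable in a cube $Q=Q'\times[a,a+s]$ with $|E|\geq\beta|Q|$, then $\|f\|_{L^\infty(Q)}\leq CM^{1-\theta}\big(|E|^{-1}\int_E|f|\,dx\big)^{\theta}$. The base case $n=1$ is exactly Lemma \ref{lemma-measurable-1}. For the inductive step, write $x=(x',x_n)$, set $E_{x'}=\{x_n:(x',x_n)\in E\}$, and note that by Fubini the set $G=\{x'\in Q':|E_{x'}|\geq\tfrac{\beta}{2}s\}$ has measure at least $\tfrac{\beta}{2}|Q'|$. For each $x'\in G$ the slice $t\mapsto f(x',t)$ is analytic with the same constants $M,\rho$, so Lemma \ref{lemma-measurable-1} controls $\sup_t|f(x',t)|$ by the slice quantity $\int_{E_{x'}}|f(x',t)|\,dt$ raised to a power $\theta_1$. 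Evaluating at the hyperplane $x_n=\bar t$ that realizes $\|f\|_{L^\infty(Q)}$ and applying the $(n-1)$-dimensional induction hypothesis to $x'\mapsto f(x',\bar t)$ on $Q'$ with measurable set $G$, then bounding $\int_G|f(x',\bar t)|\,dx'$ via the slice estimate together with H\"older's inequality, yields the claim with composite exponent $\theta=\theta_1\theta_2$ and a constant depending only on $\beta,s,|Q'|,\rho$.

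To globalize, I would use that $\omega$ is a connected bounded domain and $f$ is analytic with the uniform bounds $|\partial^a f|\leq M\rho^{-|a|}|a|!$ throughout $\omega$. A standard three-ball (chain-of-balls) propagation of smallness then gives $\|f\|_{L^\infty(\omega)}\leq CM^{1-\theta'}\|f\|_{L^\infty(Q_0)}^{\theta'}$ for some $\theta'\in(0,1)$ determined by the geometry of $\omega$ and $Q_0$. Combining this with the local estimate on $Q_0$, and using $\int_{\bar\omega\cap Q_0}|f|\leq\int_{\bar\omega}|f|$, produces the asserted inequality with a final exponent $\theta\in(0,1)$ and a constant $C$ depending only on $|\omega|,\rho,|\bar\omega|$.

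The main obstacle is the inductive slicing step: one must check that the one-dimensional analytic bounds are inherited uniformly by every coordinate slice, track the multiplicative degradation $\theta\mapsto\theta_1\theta_2$ of the interpolation exponent through each successive dimension, and apply H\"older's inequality so that the resulting constant and exponent depend only on the admissible parameters and not on $f$. The density localization and the three-ball globalization are comparatively routine once this cube estimate is in place; a streamlined execution of all three steps appears in Section 3 of \cite{apraiz2014observability}.
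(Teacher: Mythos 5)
The paper itself does not prove Lemma \ref{lemma-measurable}: it is quoted as a known propagation-of-smallness estimate, with references to \cite{vessella1999continuous} and to Section 3 of \cite{apraiz2014observability}. So your proposal has to be measured against the statement itself and against the way the lemma is used later in the paper. Your overall architecture --- the one-dimensional Lemma \ref{lemma-measurable-1} applied to coordinate slices, Fubini to produce a set $G$ of good slices with $|G|\geq\tfrac{\beta}{2}|Q'|$, induction on the dimension with H\"older composing the exponents into $\theta_1\theta_2$, and a chain-of-balls globalization --- is exactly the standard route of the cited literature, and the inductive cube estimate is correct and, crucially, \emph{uniform}: its constants depend only on $\rho$, the cube dimensions and the density fraction $\beta$, not on where $E$ sits inside the cube. (One minor point: you need the cube side $s\leq 1$ so that the slice bound $M\rho^{-k}k!$ implies the hypothesis $Mk!(s\rho)^{-k}$ of Lemma \ref{lemma-measurable-1}.)

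The genuine gap is in your localization step, which destroys precisely this uniformity. You select $Q_0$ by the Lebesgue density theorem and concede that its ``side length and location depend only on $\bar{\omega}$'' --- that is, on the \emph{set} $\bar{\omega}$, whereas the lemma asserts $C=C(|\omega|,\rho,|\bar{\omega}|)$ and $\theta=\theta(|\omega|,\rho,|\bar{\omega}|)$, so the only admissible information about $\bar{\omega}$ is its measure. No function of $|\bar{\omega}|$ alone can bound from below the scale at which high-density cubes exist: in $\omega=(0,1)^n$ take $\bar{\omega}_\varepsilon$ to be a union of balls of radius comparable to $\varepsilon$ centered at the points of a grid of spacing $\varepsilon$, normalized so that $|\bar{\omega}_\varepsilon|=\sigma$ for every $\varepsilon$; then every cube of side $\geq C\varepsilon$ has relative density about $\sigma$, so cubes of density $\geq 1/2$ must have side of order $\varepsilon\to 0$ while the measure stays fixed, and the constants your proof produces blow up along this family. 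This matters for the paper: in the proof of Theorem \ref{thm:inter} the lemma is applied with $\bar{\omega}=D_t$ for a.e.\ $t\in F$, where only the bound $|D_t|\geq\sigma$ is known and the sets vary with $t$, and a single pair $(C_1,\theta_1)$ must serve the whole family; without measure-only dependence that step collapses. The repair is to replace the density theorem by a quantitative pigeonhole selection: choose $\delta=\delta(\omega,\sigma)>0$ with $|\{x\in\omega:\mathrm{dist}(x,\partial\omega)<\delta\}|<\sigma/2$, tile $\R^n$ by cubes of side $\delta/(2\sqrt{n})$, and pick a grid cube $Q_0\subset\omega$ with $|\bar{\omega}\cap Q_0|\geq\beta|Q_0|$, where $\beta=\beta(\omega,\sigma)>0$ comes from pigeonholing over the boundedly many cubes meeting $\omega$. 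Your induction accepts any fixed density fraction $\beta$, and the chaining step never sees $\bar{\omega}$, so with this replacement the argument yields constants depending only on $\omega$, $\rho$ and $|\bar{\omega}|$ --- which is the form actually needed (and, as in Theorem \ref{thm:inter} and \cite{apraiz2014observability}, the dependence is really on the domain $\omega$ itself, since its diameter enters the chaining; that imprecision is in the paper's statement, not yours).
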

Next, we shall make use of Lemma \ref{lemma-ob} and Lemma \ref{lemma-analytic}, as well as Lemma \ref{lemma-measurable-1} and Lemma \ref{lemma-measurable}, to establish an interpolation inequality from
measurable sets for any solution $y$ to equation \eqref{eq:main}. For similar results, we refer the reader to \cite{apraiz2014observability} or \cite{liu2017observability}.
\begin{theorem}
\label{thm:inter}
Let $0\leq t_1<t_2<1$, $\eta\in(0,1)$, $\mu\in(0,1)$ defined in \eqref{u}, $\alpha\in(0,2)$ and $\sigma>0$. Assume that $E\subset(t_1,t_2)$ is a measurable subset with $|E\cap(t_1,t_2)|\geq\eta(t_2-t_1)$ such that for all $t\in E$, the measurable subset $D_t\subset\omega$ stands that $|D_t|\geq\sigma$. Then, there are constants $C=C(I,\omega,\alpha,\eta,\sigma,\mu)\geq1$ and $\theta=\theta(I,\omega,\alpha,\eta,\sigma,\mu)\in(0,1)$ such that the solution to equation (\ref{eq:main}) satisfies
\begin{equation}\label{thm-1}
\|y(\cdot,t_2)\|\leq \left( \int_{t_1}^{t_2}\chi_E \|y(\cdot,t)\|_{L^1(D_t)}dt\right)^\theta\left( e^{C(t_2-t_1)^{-\frac{\mu}{1-\mu}}}\|y(\cdot,t_1)\| \right)^{1-\theta}.
\end{equation}
\end{theorem}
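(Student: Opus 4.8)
The plan is to transfer the measurable observation, which lives in the space--time region $\omega\times(0,T)$ away from the degeneracy at $x=0$, first to a full \emph{open} observation via the propagation of smallness, and then to feed this into the open--set observability of Lemma \ref{lemma-ob} in order to recover $\|y(\cdot,t_2)\|$ over \emph{all} of $I$ (including a neighbourhood of the singular point). Write $L:=t_2-t_1$ and recall that $\omega$ is bounded away from $0$, so that Lemma \ref{lemma-analytic} applies with $s=t_1$. Since $|E\cap(t_1,t_2)|\geq\eta L$, the observation set cannot concentrate entirely near $t_1$: choosing $\ell:=t_1+\tfrac{\eta}{2}L$ I would have simultaneously $t-t_1\geq\tfrac{\eta}{2}L$ for $t\in[\ell,t_2]$ (so that the factor $e^{C/(t-t_1)}$ in Lemma \ref{lemma-analytic} stays controlled) and $|E\cap(\ell,t_2)|\geq\tfrac{\eta}{2}L$ (so that enough observation survives). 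I would also fix an open subinterval $\omega_0\subseteq\omega$ bounded away from $0$, to serve as the open observation set in Lemma \ref{lemma-ob}.

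The decisive point is to keep the final exponent $\theta$ \emph{independent} of $t_1,t_2$. To this end I would rescale time by $\tilde t:=(t-\ell)/(t_2-\ell)\in(0,1)$ and set $\tilde y(x,\tilde t):=y\big(x,\ell+(t_2-\ell)\tilde t\big)$. Then $\partial_{\tilde t}^{\gamma}\tilde y=(t_2-\ell)^{\gamma}\partial_t^{\gamma}y$, and since $t-t_1\in[\tfrac{\eta}{2}L,L]$ and $t_2-\ell\leq L$, the chain--rule factor $(t_2-\ell)^{\gamma}$ compensates the $\big((t-t_1)/2\big)^{-\gamma}$ of Lemma \ref{lemma-analytic} up to the power $(4/\eta)^{\gamma}$, which can be absorbed into a fixed radius. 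Using $a!\,\gamma!\leq(a+\gamma)!$ this gives
\[
|\partial_x^{a}\partial_{\tilde t}^{\gamma}\tilde y(x,\tilde t)|\leq M\,\tilde\rho^{-(a+\gamma)}(a+\gamma)!,\qquad (x,\tilde t)\in\omega\times(0,1),
\]
with the fixed radius $\tilde\rho:=\min\{\rho,\eta/4\}\in(0,1]$ \emph{independent of} $L$ and $M:=C e^{C/(\eta L)}\|y(\cdot,t_1)\|$. Thus $\tilde y$ is real analytic on the fixed bounded domain $\Omega:=\omega\times(0,1)\subset\R^2$, and crucially its analyticity radius does not degenerate as $L\to0$.

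Next I would apply the multidimensional propagation of smallness (Lemma \ref{lemma-measurable} with $n=2$) on $\Omega$ to the measurable set $\widetilde{\mathcal D}:=\{(x,\tilde t)\in\Omega:\ \ell+(t_2-\ell)\tilde t\in E,\ x\in D_{\ell+(t_2-\ell)\tilde t}\}$. By Fubini together with $|D_t|\geq\sigma$ and $|E\cap(\ell,t_2)|\geq\tfrac{\eta}{2}L$, one gets $|\widetilde{\mathcal D}|=(t_2-\ell)^{-1}\int_{E\cap(\ell,t_2)}|D_t|\,dt\geq \sigma\eta/(2-\eta)$, a lower bound \emph{independent of} $L$. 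Replacing $\widetilde{\mathcal D}$ by a subset of a fixed measure if necessary, this produces a single exponent $\theta=\theta(|\omega|,\rho,\eta,\sigma)\in(0,1)$ and a constant $C_2$ with
\[
\|y\|_{L^\infty(\omega\times(\ell,t_2))}=\|\tilde y\|_{L^\infty(\Omega)}\leq C_2\,M^{1-\theta}\Big(\int_{\widetilde{\mathcal D}}|\tilde y(x,\tilde t)|\,dx\,d\tilde t\Big)^{\theta}.
\]
Undoing the rescaling, $\int_{\widetilde{\mathcal D}}|\tilde y|\,dx\,d\tilde t=(t_2-\ell)^{-1}\int_{E\cap(\ell,t_2)}\int_{D_t}|y(x,t)|\,dx\,dt\leq (t_2-\ell)^{-1}\int_{t_1}^{t_2}\chi_E\|y(\cdot,t)\|_{L^1(D_t)}\,dt$.

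Finally I would recover the $L^2(I)$ norm. Applying Lemma \ref{lemma-ob} on the interval $(\ell,t_2)$ (legitimate since the equation is autonomous) and bounding $\int_\ell^{t_2}\int_{\omega_0}|y|^2\leq|\omega_0|(t_2-\ell)\|y\|_{L^\infty(\omega\times(\ell,t_2))}^2$ yields $\|y(\cdot,t_2)\|\leq C\,e^{\frac{C}{2}(t_2-\ell)^{-\mu/(1-\mu)}}(t_2-\ell)^{1/2}\|y\|_{L^\infty(\omega\times(\ell,t_2))}$. Substituting the two previous displays and recalling $M=Ce^{C/(\eta L)}\|y(\cdot,t_1)\|$ gives $\|y(\cdot,t_2)\|\leq K\,\|y(\cdot,t_1)\|^{1-\theta}\big(\int_{t_1}^{t_2}\chi_E\|y(\cdot,t)\|_{L^1(D_t)}\,dt\big)^{\theta}$, where $K$ is a product of a fixed constant, of the powers $(t_2-\ell)^{1/2-\theta}$, and of the exponentials $e^{\frac{C}{2}(t_2-\ell)^{-\mu/(1-\mu)}}$ and $e^{C(1-\theta)/(\eta L)}$. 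Here lies the crux of the bookkeeping, and the main obstacle of the whole argument: since $t_2-\ell=(1-\tfrac{\eta}{2})L\in[\tfrac{L}{2},L]$ and, for the admissible values of $\mu$ in \eqref{u}, one has $\mu/(1-\mu)\geq 3>1$, the factor $e^{cL^{-\mu/(1-\mu)}}$ dominates every polynomial power of $L^{-1}$ and every $e^{c/L}$ as $L\to0^+$; hence $K\leq\big(e^{CL^{-\mu/(1-\mu)}}\big)^{1-\theta}$ for a suitably enlarged constant $C=C(I,\omega,\alpha,\eta,\sigma,\mu)$ (the exponent $1-\theta$ being harmless now that $\theta$ is fixed). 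This is precisely \eqref{thm-1}. The difficulty is thus entirely quantitative: arranging the time rescaling and the subset reduction so that $\theta$ does not depend on $t_1,t_2$, and then verifying that all subordinate constant, polynomial and $e^{c/L}$ factors can be swallowed by the single factor $e^{C(t_2-t_1)^{-\mu/(1-\mu)}}$.
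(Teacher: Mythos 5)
Your proposal is correct, but it takes a genuinely different route from the paper's. The paper never rescales time and never invokes the multidimensional propagation of smallness: after splitting at $\tau=t_1+\tfrac{\eta}{3}(t_2-t_1)$ and setting $F=E\cap(\tau,t_2)$, it applies the \emph{one-dimensional} Lemma \ref{lemma-measurable-1} in the time variable for each fixed $x\in\omega$ (that lemma is stated scale-invariantly, with constants depending only on $\rho$ and the ratio $|F|/s$, which is precisely why no rescaling is needed there), then applies Lemma \ref{lemma-measurable} with $n=1$ in the space variable for each $t\in F$, chains the two estimates through H\"{o}lder's inequality, and closes with Lemma \ref{lemma-ob} combined with the interpolation $\|y\|_{L^2(\tau,t_2;L^2(\omega))}\le\|y\|_{L^\infty}^{1/2}\|y\|_{L^1}^{1/2}$; the final exponent is (up to the factor $\tfrac12$ from that interpolation) the product of the two one-dimensional exponents. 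You instead rescale $(\ell,t_2)$ onto the fixed interval $(0,1)$ so that $\tilde y$ is analytic on $\omega\times(0,1)$ with radius $\tilde\rho$ independent of $t_2-t_1$, apply Lemma \ref{lemma-measurable} once with $n=2$ to the space--time set $\widetilde{\mathcal D}$ (whose measure you correctly bound below by $\sigma\eta/(2-\eta)$, independently of $t_2-t_1$, and correctly reduce to a subset of fixed measure so that $\theta$ does not drift), and replace the interpolation by the cruder bound $\int_\ell^{t_2}\!\int_{\omega_0}|y|^2\le|\omega_0|(t_2-\ell)\|y\|_{L^\infty(\omega\times(\ell,t_2))}^2$. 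Both arguments rest on the same two inputs (Lemmas \ref{lemma-ob} and \ref{lemma-analytic}) and on the same absorption principle, which you state explicitly: since $\mu/(1-\mu)\ge 3>1$ and $t_2-t_1<1$, every factor $e^{c/(t_2-t_1)}$ and every negative power of $t_2-t_1$ is dominated by $e^{C(t_2-t_1)^{-\mu/(1-\mu)}}$, and the fixed exponent $1-\theta$ can be absorbed by enlarging $C$. What your version buys is a single propagation exponent, no H\"{o}lder chaining, and a cleaner identification of where the uniformity in $t_1,t_2$ comes from (the rescaling); what the paper's version buys is that the slice structure $F\times D_t$ is exploited directly, so only one-dimensional propagation estimates are ever needed and no change of variables is required.
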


\begin{proof}
Define
$$
\tau=t_1+\frac{\eta}{3}(t_2-t_1),\,\,F=E\cap(\tau,t_2).
$$
One can verify that$|F|>\frac{\eta}{2}(t_2-t_1)$. This shows that $F\subset[\tau,t_2]$ is a measurable subset of positive measure.

From Lemma \ref{lemma-analytic}, we have that there are constants $C=C(I,\omega,\alpha,\eta,\sigma)\geq1$ and $\theta=\theta(I,\omega,\alpha,\eta,\sigma)\in(0,1)$ such that for all $t\in[\tau,t_2]$ and for all $x\in\omega$, $a\in\N$, $\gamma\in\N$,
\begin{equation}\label{thm-2}
|\partial_x^a\partial_t^\gamma y(\cdot,t)|\leq \dfrac{e^{\frac{C}{t_2-t_1}}a!\gamma!}{\rho^a(\eta(t_2-t_1)/2)^\gamma}\|y(\cdot,t_1)\|.
\end{equation}
Now, for simplicity, we write
\begin{equation}\label{thm-3}
M=e^{\frac{C}{t_2-t_1}}\|y(\cdot,t_1)\|.
\end{equation}
Then for all $x\in\omega$ and $\gamma\in\N$, by (\ref{thm-2}) with $a=0$, we have
\begin{equation}\label{thm-4}
|\partial_t^\gamma y(\cdot,t)|\leq M\gamma!\left(\frac{\eta(t_2-t_1)}{2}\right)^{-\gamma}\leq M\gamma!\left(\frac{\eta(t_2-\tau)}{6}\right)^{-\gamma},~\text{for all}~t\in[\tau,t_2].
\end{equation}
Hence, it follows from Lemma \ref{lemma-measurable-1} that there are constants $C=C(\eta)\geq1$ and $\theta=\theta(\eta)\in(0,1)$ such that for all $x\in\omega$,
\begin{equation}\label{thm-5}
\|y(x,\cdot)\|_{L^\infty(\tau,t_2)}\leq CM^{1-\theta}\left( \frac{1}{|F|}\int_F|y(x,t)|dt \right)^\theta.
\end{equation}
On the other hand, we obtain from Lemma \ref{lemma-ob} that there exists a constant $C=C(I,\omega,\alpha,\mu)$ such that the following observability inequality holds:
\begin{equation}\label{thm-6}
\int_I|y(x,t_2)|^2dx\leq e^{C(t_2-\tau)^{-\frac{\mu}{1-\mu}}}\int_\tau^{t_2}\int_{\omega}|y(x,t)|^2dxdt.
\end{equation}
By (\ref{thm-2}) with $a=\gamma=0$, we have
$$
\|y\|_{L^2(\tau,t_2;L^2(\omega))}\leq\|y\|^{\frac{1}{2}}_{L^\infty(\tau,t_2;L^\infty(\omega))}\|y\|^{\frac{1}{2}}_{L^1(\tau,t_2;L^1(\omega))}\leq
e^{\frac{C}{t_2-t_1}}\|y(\cdot,t_1)\|^{\frac{1}{2}}\|y\|^{\frac{1}{2}}_{L^1(\tau,t_2;L^1(\omega))},
$$
which along with (\ref{thm-6}), it holds
\begin{equation}\label{thm-7}
\begin{split}
\|y(\cdot,t_2)\|&\leq\|y\|^{\frac{1}{2}}_{L^1(\tau,t_2;L^1(\omega))} e^{C(t_2-\tau)^{-\frac{\mu}{1-\mu}}}e^{\frac{C}{t_2-t_1}}\|y(\cdot,t_1)\|^{\frac{1}{2}}\\
&\leq \|y\|^{\frac{1}{2}}_{L^1(\tau,t_2;L^1(\omega))}\left( e^{C(t_2-t_1)^{-\frac{\mu}{1-\mu}}}\|y(\cdot,t_1)\| \right)^{\frac{1}{2}}.
\end{split}
\end{equation}
It follows by (\ref{thm-5}) that for any $t\in F$,
\begin{equation}\label{thm-8}
\begin{split}
\|y\|_{L^1(\tau,t_2;L^1(\omega))}&\leq(t_2-\tau)\int_\omega\|y(x,\cdot)\|_{L^\infty(\tau,t_2)}dx
\leq CM^{1-\theta}\int_\omega\left(\int_F|y(x,t)|dt \right)^\theta dx\\
&\leq CM^{1-\theta}\left(\int_\omega\int_F|y(x,t)|dt dx\right)^\theta.
\end{split}
\end{equation}
Finally, by (\ref{thm-2}) with $\gamma=0$, we infer that for any $t \in F$,
$$
|\partial_x^ay(\cdot,t)|\leq Ma!\rho^{-a},~\text{for all}~x\in\omega.
$$
Since the measurable subset $D_t\subset\omega$ stands that $|D_t|\geq\sigma$, when $t\in F$, we obtain from Lemma \ref{lemma-measurable} (with $n=1$) that there are constants $C_1=C_1(\omega,\rho,\sigma)\geq1$ and $\theta_1=\theta_1(\omega,\rho,\sigma)\in(0,1)$ such that
$$
\|y(\cdot,t)\|_{L^\infty(\omega)}\leq C_1M^{1-\theta_1}\left( \int_{D_t}|y(x,t)|dx \right)^{\theta_1},
$$
which along with (\ref{thm-8}), it implies
\begin{equation}\label{thm-9}
\begin{split}
\|y\|_{L^1(\tau,t_2;L^1(\omega))}&\leq CM^{1-\theta}\left[\int_FC_1M^{1-\theta_1}\left(\int_{D_t}|y(x,t)| dx\right)^{\theta_1} dt\right]^\theta\\
&\leq CC_1M^{1-\theta\theta_1}\left( \int_F\int_{D_t}|y(x,t)| dxdt \right)^{\theta\theta_1}.
\end{split}
\end{equation}
Therefore, by (\ref{thm-7}), it implies
$$
\|y(\cdot,t_2)\|\leq CC_1M^{1-\theta\theta_1}\left( \int_F\int_{D_t}|y(x,t)| dxdt \right)^{\theta\theta_1}\left( e^{C(t_2-t_1)^{-\frac{\mu}{1-\mu}}}\|y(\cdot,t_1)\| \right)^{\frac{1}{2}}.
$$
Set $\theta'=\theta\theta_1\in(0,1)$ and by (\ref{thm-3}), we obtain the desired estimate (\ref{thm-1}).
\end{proof}
By leveraging Fubini's theorem and the property of Lebesgue density points for measurable subsets, along with the interpolation inequality established in Theorem \ref{thm:inter}, a telescoping series can be constructed with respect to any solution $y$ of \eqref{eq:main}. Subsequently, the desired observability inequality from measurable subsets can be derived directly from this telescoping series (also see \cite{apraiz2014observability} and \cite{phung2013observability}). To begin, we quote the following fact from \cite{phung2013observability}, which pertains to the property of Lebesgue density points and represents an improved version of Lemma 2.1.5 in \cite{fattorini2005infinite}.
\begin{lemma}\label{lemma-1}
Let $E\subset(0,T)$ be a measurable subset of positive measure. Assume that $l\in(0,T)$ be a Lebesgue density point of $E$. Then, for each
$q\in(0,1)$, there exists a sequence $\{\ell_n\}_{n\geq1}\subset[0,T]$, which monotone decreasing converges to $l$, such that for any $n\geq1$,
\begin{equation}\label{lemma-1-1}
\ell_{n+1}-\ell_{n+2}=q(\ell_{n}-\ell_{n+1}),
\end{equation}
and
\begin{equation}\label{lemma-1-2}
|E\cap (\ell_{n+1},\ell_n)|\geq \frac{\ell_n-\ell_{n+1}}{3}.
\end{equation}
\end{lemma}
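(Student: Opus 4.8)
The plan is to exploit the rigidity of the recursion \eqref{lemma-1-1}: once the ratio $q$ is fixed and the sequence is required to decrease monotonically to $l$, the only genuine degree of freedom is the location of the first point. Writing $d_n := \ell_n - \ell_{n+1}$, relation \eqref{lemma-1-1} reads $d_{n+1} = q d_n$, so $d_n = q^{n-1} d_1$ and the gaps form a geometric sequence. Since $\ell_n \downarrow l$, one has $\ell_n - l = \sum_{k\geq n} d_k = d_n/(1-q)$. Setting $r_n := \ell_n - l$, this yields the clean self-similar description $r_n = q^{n-1} r_1$, $d_n = (1-q) r_n$, and $r_{n+1} = q r_n$. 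Equivalently, I would simply \emph{define} $\ell_n := l + r_1 q^{n-1}$ for a single parameter $r_1 = \ell_1 - l > 0$ to be chosen, and check directly that \eqref{lemma-1-1} holds and that $\ell_n \downarrow l$. Thus the whole lemma reduces to selecting $r_1$ small enough that the density estimate \eqref{lemma-1-2} is valid for \emph{every} $n \geq 1$ simultaneously.

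To handle \eqref{lemma-1-2}, I would first convert the hypothesis that $l$ is a Lebesgue density point of $E$ into one-sided (right-hand) information. The two-sided density $\lim_{r\to 0^+} |E\cap(l-r,l+r)|/(2r) = 1$ forces $\lim_{r\to 0^+} |(l,l+r)\setminus E|/r = 0$, because $|(l,l+r)\setminus E| \leq |(l-r,l+r)\setminus E| = o(r)$. The point of this reformulation is that each interval $(\ell_{n+1},\ell_n)$ lies to the right of, and accumulates at, $l$: indeed $(\ell_{n+1},\ell_n) \subset (l, l+r_n)$. Hence I can bound the ``bad'' part of the interval by
\[
|(\ell_{n+1},\ell_n)\setminus E| \leq |(l, l+r_n)\setminus E|,
\]
and compare it against the interval length $|(\ell_{n+1},\ell_n)| = d_n = (1-q)r_n$.

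The choice of $r_1$ is then made quantitatively. Given $q\in(0,1)$, the right-density statement provides $\delta>0$ (which I also shrink so that $\delta \leq T-l$, keeping the sequence inside $[0,T]$) such that $|(l,l+r)\setminus E| \leq \tfrac{2}{3}(1-q)\,r$ for all $r\in(0,\delta)$. I then fix any $r_1 \in (0,\delta)$. Because $r_n = q^{n-1}r_1 \leq r_1 < \delta$ for every $n\geq 1$, the estimate applies simultaneously at every scale, and
\[
|(\ell_{n+1},\ell_n)\setminus E| \leq |(l,l+r_n)\setminus E| \leq \tfrac{2}{3}(1-q)r_n = \tfrac{2}{3}d_n,
\]
whence $|E\cap(\ell_{n+1},\ell_n)| \geq d_n - \tfrac{2}{3}d_n = \tfrac{1}{3}(\ell_n-\ell_{n+1})$, which is exactly \eqref{lemma-1-2}.

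The main obstacle, and the one place where care is needed, is precisely the mismatch between the hypothesis and the conclusion: the density point controls intervals \emph{centred at} $l$, whereas \eqref{lemma-1-2} concerns intervals that strictly avoid $l$ and sit to one side of it. The resolution is the monotone nesting $(\ell_{n+1},\ell_n)\subset(l,l+r_n)$ combined with the observation that the self-similar (geometric) structure of the $r_n$ makes every $r_n$ no larger than $r_1$; this is what upgrades the estimate from holding only for large $n$ (which a bare density-point limit would give) to holding for all $n\geq 1$, once $r_1$ is taken below the threshold $\delta$.
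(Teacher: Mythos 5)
Your proof is correct. Note that the paper itself offers no proof of Lemma \ref{lemma-1}: it is quoted from \cite{phung2013observability}, so there is no internal argument to compare against. Your construction — forcing \eqref{lemma-1-1} by taking the geometric sequence $\ell_n = l + q^{n-1}r_1$, converting the two-sided density hypothesis into the one-sided estimate $|(l,l+r)\setminus E|\le \tfrac{2}{3}(1-q)\,r$ for all $r<\delta$, and then fixing $r_1<\min\{\delta,\,T-l\}$ so that every scale $r_n\le r_1$ falls below the threshold (which is exactly what upgrades the estimate \eqref{lemma-1-2} from "for $n$ large" to "for all $n\ge 1$") — is essentially the standard argument in that reference, with the bookkeeping carried out on the complement of $E$ rather than on $E$ itself.
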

Now, we are prepared to present the proof of Theorem \ref{thm:obs-inq} by employing the techniques outlined in Theorem \ref{thm:inter} and Lemma \ref{lemma-1}, in addition to the telescoping series method.

\begin{proof}[\textbf{Proof of Theorem~\ref{thm:obs-inq}}]
Firstly, we can make the assumption, without loss of generality, that $T \in (0, 1]$ and that there exists a subdomain $\omega \subset I$ such that $0 \notin \bar{\omega}$. (Alternatively, we can select a new measurable subset $\tilde{D} \subset \left(I \setminus \{0\}\right) \times (T_1, T_2)$ with $0 \leq T_1 < T_2 \leq T$ and $T_2 - T_1 \leq 1$.) Next, for a.e. $t\in(0,T)$, we slice the space
\begin{equation}\label{thm:obs-inq-1}
D_t=\{ x\in\omega:(x,t)\in D \}.
\end{equation}
Define
\begin{equation}\label{thm:obs-inq-2}
E=\left\{ t\in(0,T):|D_t|\geq\frac{|D|}{2T} \right\}.
\end{equation}
From the Fubini theorem, we have
$$
|D|=\int_0^T|D_t|dt=\int_E|D_t|dt+\int_{(0,T)\setminus E}|D_t|dt\leq |E||\omega|+\frac{|D|}{2},
$$
which shows $|E|\geq \frac{|D|}{2|\omega|}$, i.e. $E$ is a measurable subset with positive measures. Moreover, we have
\begin{equation}\label{thm:obs-inq-3}
\chi_E(t)\chi_{D_t}(x)\leq\chi_D(x,t),\,\,a.e.\,(x,t)\in I\times(0,T).
\end{equation}
Let $l\in(0,T)$ be a Lebesgue density point of $E$. Then for each $q \in(0, 1)$, which is to be fixed later, by Lemma \ref{lemma-1}, there exists a monotone decreasing sequence $\{\ell_n\}_{n\geq1}$ such that (\ref{lemma-1-1}) and (\ref{lemma-1-2}) hold. Moreover,
\begin{equation}\label{thm:obs-inq-4}
\lim_{n\rightarrow+\infty}l_n=l.
\end{equation}
We can obtain from Theorem \ref{thm:inter} that there are positive constants $C=C(I,\omega,\alpha,\eta,\sigma,\mu)\geq1$ and $\theta=\theta(I,\omega,\alpha,\eta,\sigma,\mu)\in(0,1)$ such that for all $n\geq1$,
\begin{equation}\label{thm:obs-inq-5}
\|y(\cdot,l_n)\|\leq \left( \int_{l_{n+1}}^{l_n}\chi_E \|y(\cdot,t)\|_{L^1(D_t)}dt\right)^\theta\left( e^{C(l_n-l_{n+1})^{-\frac{\mu}{1-\mu}}}\|y(\cdot,l_{n+1})\| \right)^{1-\theta}.
\end{equation}
Using the Young inequality with $\epsilon$, i.e., $ab\leq\epsilon a^{(1-\theta)^{-1}}+\epsilon^{-\frac{1-\theta}{\theta}}b^{\theta^{-1}}$, the above inequality leads to
\begin{equation}\label{thm:obs-inq-6}
\|y(\cdot,l_n)\|\leq \epsilon\|y(\cdot,l_{n+1})\|+\epsilon^{-\frac{1-\theta}{\theta}}e^{C(l_n-l_{n+1})^{-\frac{\mu}{1-\mu}}}\int_{l_{n+1}}^{l_n}\chi_E \|y(\cdot,t)\|_{L^1(D_t)}dt.
\end{equation}
Multiplying the above inequality by $\epsilon^{\frac{1-\theta}{\theta}} e^{-C(\ell_{n}-\ell_{n+1})^{-\frac{\mu}{1-\mu}}}$, we have
\begin{align*}
 &\epsilon^{\frac{1-\theta}{\theta}}e^{-C(\ell_{n}-\ell_{n+1})^{-\frac{\mu}{1-\mu}}}\|y(\cdot,l_n)\|-\epsilon^{\frac{1}{\theta}}e^{-C(\ell_{n}-\ell_{n+1})^{-\frac{\mu}{1-\mu}}}
\|y(\cdot,l_{n+1})\|\\
\leq& \int_{l_{n+1}}^{l_n}\chi_E \|y(\cdot,t)\|_{L^1(D_t)}dt.   
\end{align*}
Choosing $\epsilon =e^{-\theta(\ell_{n}-\ell_{n+1})^{-\frac{\mu}{1-\mu}}}$ in the above inequality, we get
\begin{align*}
&e^{-(C+1-\theta)(\ell_{n}-\ell_{n+1})^{-\frac{\mu}{1-\mu}}}\|y(\cdot,l_n)\|-e^{-(C+1)(\ell_{n}-\ell_{n+1})^{-\frac{\mu}{1-\mu}}}
\|y(\cdot,l_{n+1})\|\\
\leq& \int_{l_{n+1}}^{l_n}\chi_E \|y(\cdot,t)\|_{L^1(D_t)}dt.
\end{align*}

Now, fixing $q = \bigg(\frac{C+1-\theta}{C+1}\bigg)^{\frac{1-\mu}{\mu}}\in(0,1)$, it follows from (\ref{lemma-1-1}) and
the latter inequality that
\begin{align*}
&e^{-(C+1-\theta)(\ell_{n}-\ell_{n+1})^{-\frac{\mu}{1-\mu}}}\|y(\cdot,l_n)\|-e^{-(C+1-\theta)(\ell_{n+1}-\ell_{n+2})^{-\frac{\mu}{1-\mu}}}
\|y(\cdot,l_{n+1})\|\\
\leq& \int_{l_{n+1}}^{l_n}\chi_E \|y(\cdot,t)\|_{L^1(D_t)}dt.
\end{align*}
To summarize the inequalities earlier from $n=1$ to $+\infty$ and noting that (\ref{thm:obs-inq-4}), we conclude that
$$
\|y(\cdot,l_1)\|\leq e^{(C+1-\theta)(\ell_{1}-\ell_{2})^{-\frac{\mu}{1-\mu}}}\int_{l}^{l_1}\chi_E(t) \|y(\cdot,t)\|_{L^1(D_t)}dt,
$$
which along with (\ref{thm:obs-inq-3}) and $\|y(\cdot,T)\|\leq C(T)\|y(\cdot,l_1)\|$, we obtain the desired estimate (\ref{eq:obs-inq}). The proof is completed.

\end{proof}

\section{A  Stackelberg-Nash game problem}
In this section, we shall discuss the Stackelberg-Nash game problem {\bf (P1)}.

\subsection{Existence of Stackelberg-Nash equilibrium}

In this subsection, we will prove Theorem~\ref{Intro-3}. Its proof needs the Kakutani fixed point theorem
quoted from \cite{charalambos2013infinite}.

\begin{lemma}\label{Intro-5}  Let $S$ be a nonempty, compact and convex subset of
a locally convex Hausdorff space $X$. Let $\Phi: S\mapsto 2^S$ (where $2^S$ denotes the set consisting of
all subsets of $S$)
be a set-valued function
satisfying:
\begin{itemize}
\item[(i)] For each $s\in S$, $\Phi(s)$ is a nonempty and convex subset;
\item[(ii)] Graph $\Phi:=\{(s,z): s\in S\;\mbox{and}\;z\in \Phi(s)\}$ is closed.
\end{itemize}
Then the set of fixed points of $\Phi$ is nonempty and compact, where $s^*\in S$ is called to
be a \emph{fixed point} of $\Phi$ if $s^*\in \Phi(s^*)$.
\end{lemma}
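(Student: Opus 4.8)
The plan is to prove this in two stages---first the finite-dimensional case via Brouwer's fixed point theorem, then the general locally convex case by a neighborhood-approximation argument---since Lemma~\ref{Intro-5} is precisely the Kakutani--Fan--Glicksberg theorem. Before starting I would record two preliminary observations that follow purely from the hypotheses. First, because $\mathrm{Graph}\,\Phi$ is closed and $S$ is compact, each value $\Phi(s)=\{z\in S:(s,z)\in\mathrm{Graph}\,\Phi\}$ is a closed subset of the compact set $S$, hence compact; moreover the closed graph together with compactness of the range $S$ yields that $\Phi$ is upper hemicontinuous. Second, the fixed-point set is $\mathrm{Fix}(\Phi)=\{s\in S:(s,s)\in\mathrm{Graph}\,\Phi\}$, which is the preimage of the closed set $\mathrm{Graph}\,\Phi$ under the continuous diagonal embedding $s\mapsto(s,s)$; thus $\mathrm{Fix}(\Phi)$ is automatically closed, and being a subset of the compact set $S$ it is compact. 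So the entire difficulty is concentrated in showing $\mathrm{Fix}(\Phi)\neq\emptyset$.

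For the finite-dimensional case, where $S$ is a compact convex subset of $\R^n$, I would take a sequence of simplicial subdivisions of $S$ with mesh tending to $0$. For each subdivision I select, at every vertex $v$, a point $f(v)\in\Phi(v)$ (nonempty by hypothesis) and extend $f$ affinely over each simplex to obtain a continuous single-valued map $f_k:S\to S$. Brouwer's theorem supplies a fixed point $x_k=f_k(x_k)$. Passing to a convergent subsequence $x_k\to x^*$ and writing $x_k$ as a convex combination of values $f(v)\in\Phi(v)$ over the vertices $v$ of the simplex containing $x_k$ (all of which converge to $x^*$ as the mesh shrinks), the upper hemicontinuity and convexity of the values force $x^*\in\Phi(x^*)$ in the limit, using the closed graph.

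For the general case I would produce approximate fixed points indexed by the convex symmetric open neighborhoods $U$ of $0$ in $X$, which form a directed set under reverse inclusion. Given such a $U$, compactness of $S$ yields finitely many centers $s_1,\dots,s_m$ with $S\subseteq\bigcup_i(s_i+U)$; choosing a subordinate partition of unity $\{\beta_i\}$ and points $z_i\in\Phi(s_i)$, the continuous self-map $g(s)=\sum_i\beta_i(s)z_i$ of the finite-dimensional compact convex set $\mathrm{conv}\{z_1,\dots,z_m\}\subseteq S$ has, by Brouwer, a fixed point $s_U=g(s_U)$. Thus $s_U$ is a convex combination of values $z_i\in\Phi(s_i)$ whose centers $s_i$ all lie within $U$ of $s_U$; invoking upper hemicontinuity and the convexity of the values, one then locates an actual element $z_U\in\Phi(s_U)$ with $z_U-s_U$ controlled by $U$. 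By compactness of $S$ the net $(s_U)$ has a subnet converging to some $s^*\in S$, and since $z_U-s_U\to 0$ along the net the companion points $z_U$ converge to $s^*$ as well; because $z_U\in\Phi(s_U)$ and $\mathrm{Graph}\,\Phi$ is closed, the limit satisfies $(s^*,s^*)\in\mathrm{Graph}\,\Phi$, i.e. $s^*\in\Phi(s^*)$, which is the required fixed point.

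\textbf{The hard part} will be the infinite-dimensional approximation step: organizing the partition-of-unity construction so that each approximate fixed point genuinely lies within $U$ of an element of its own image, and then making the net-limit argument rigorous in an arbitrary locally convex Hausdorff space, where one must work with the neighborhood filter of $0$ (equivalently, the generating family of seminorms) rather than with a metric. The convexity of the values $\Phi(s)$ is essential precisely here---it is what keeps the barycentric and partition-of-unity averages inside the relevant images---and it is the hypothesis that cannot be relaxed.
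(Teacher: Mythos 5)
The paper itself offers no proof of this lemma---it is quoted verbatim from its reference on infinite-dimensional analysis---so the only question is whether your from-scratch argument is sound. Its architecture (Brouwer via vertex selections and partitions of unity, then a net limit) is the standard Fan--Glicksberg strategy, and your preliminaries and finite-dimensional step are essentially correct: there each approximate fixed point involves a \emph{bounded} number of coefficients and selected values, so one can pass to convergent subsequences and invoke the closed graph at the limit. The genuine gap is in the infinite-dimensional step, namely the claim that from $s_U=\sum_i\beta_i(s_U)z_i$ with $z_i\in\Phi(s_i)$ and $s_i-s_U\in U$, ``upper hemicontinuity and convexity'' produce an element $z_U\in\Phi(s_U)$ with $z_U-s_U$ controlled by $U$. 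Upper hemicontinuity at $s_U$ gives, for the open set $\Phi(s_U)+U$, \emph{some} neighborhood $V_U$ of $s_U$ with $\Phi(V_U\cap S)\subseteq\Phi(s_U)+U$, but the size of $V_U$ has nothing to do with $U$: nothing forces $s_U+U\subseteq V_U$, and a closed-graph correspondence admits no uniform modulus of upper hemicontinuity because it need not be lower hemicontinuous. Concretely, on $S=[0,1]$ let $\Phi(s)=\{1\}$ for $s<\tfrac12$, $\Phi(\tfrac12)=[0,1]$, and $\Phi(s)=\{0\}$ for $s>\tfrac12$; the graph is closed and all values are nonempty, compact and convex, yet the point $s=\tfrac12+\tfrac{\delta}{2}$ lies in $\mathrm{conv}\,\Phi\bigl((s-\delta,s+\delta)\bigr)=[0,1]$ while $\mathrm{dist}\bigl(s,\Phi(s)\bigr)=s>\tfrac12$. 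So an approximate fixed point of your construction need not be anywhere near its own image set, and the concluding step ``$z_U-s_U\to 0$, closed graph, hence $s^*\in\Phi(s^*)$'' collapses.

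The repair is to invoke upper hemicontinuity at the \emph{limit} point rather than at the moving points. Pass to a subnet $s_U\to s^*$ and suppose $s^*\notin\Phi(s^*)$. Since $\Phi(s^*)$ is compact and convex, Hahn--Banach separation gives a continuous linear functional $f$ and $\varepsilon>0$ with $f(s^*)\ge \sup_{\Phi(s^*)}f+2\varepsilon$; the half-space $W=\{z:f(z)<\sup_{\Phi(s^*)}f+\varepsilon\}$ is open, convex, and contains $\Phi(s^*)$, so upper hemicontinuity yields a neighborhood $N$ of $s^*$ with $\Phi(N\cap S)\subseteq W$. Far enough along the subnet, $s_U+U\subseteq N$, hence every $z_i$ appearing in $s_U=\sum_i\beta_i(s_U)z_i$ lies in $W$ and, $W$ being convex, $s_U\in W$; this contradicts $f(s_U)\to f(s^*)$. (Equivalently, avoiding separation: for every convex neighborhood $U'$ of $0$ one eventually has $\Phi\bigl((s_U+U)\cap S\bigr)\subseteq\Phi(s^*)+U'$, so $s_U\in\mathrm{conv}\bigl(\Phi(s^*)+U'\bigr)=\Phi(s^*)+U'$, whence $s^*\in\bigcap_{U'}\overline{\Phi(s^*)+U'}=\Phi(s^*)$ by compactness of $\Phi(s^*)$.) Note that this is exactly where convexity of the values earns its keep: it keeps the average $s_U$ inside the convex set $W$ (respectively $\Phi(s^*)+U'$). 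A smaller point: an arbitrary compact convex $S\subset\R^n$ need not admit simplicial subdivisions, so in the finite-dimensional step you should triangulate a large simplex containing $S$ and compose with the nearest-point retraction onto $S$, or prove the result on a simplex first and deduce the general case by retraction.
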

Now, we are in a position to prove Theorem ~\ref{Intro-3}.

\begin{proof}[\textbf{Proof of Theorem~\ref{Intro-3}}]
We first introduce  three set-valued functions
$\Phi_1: {\mathcal{U}_1}\mapsto 2^{{\mathcal{U}_2}}, \Phi_2: {\mathcal{U}_2}\mapsto 2^{{\mathcal{U}_1}}$
and $\Phi: {\mathcal{U}_1}\times {\mathcal{U}_2}\mapsto 2^{{\mathcal{U}_1}\times {\mathcal{U}_2}}$
as follows:
\begin{equation}\label{result-1}
\Phi_1 u_1:= \{u_2\in {\mathcal U}_2: J_2(u_1,u_2)\leq J_2(u_1,v_2)\;\;
\mbox{for all}\;\;v_2\in {\mathcal U}_2\},\;\;u_1\in {\mathcal U}_1,
\end{equation}
\begin{equation}\label{result-2}
\Phi_2 u_2:= \{u_1\in {\mathcal U}_1: J_1(u_1,u_2)\leq J_1(v_1,u_2)\;\;
\mbox{for all}\;\;v_1\in {\mathcal U}_1\},\;\;u_2\in {\mathcal U}_2,
\end{equation}
and
\begin{equation}\label{result-3}
\Phi(u_1,u_2):= \{({\widetilde u}_1,{\widetilde u}_2): {\widetilde u}_1\in \Phi_2 u_2\;\;\mbox{and}\;\;
{\widetilde u}_2\in \Phi_1 u_1\},\;\;(u_1,u_2)\in {\mathcal U}_1\times {\mathcal U}_2.
\end{equation}
Then we set
\begin{equation*}
X:= (L^\infty(0,T;L^2(0,1)))^2\;\;\mbox{and}\;\;S:= {\mathcal U}_1\times {\mathcal U}_2.
\end{equation*}
It is clear that $X$ is a locally convex Hausdorff space.
The rest of the proof will be carried out by the following four steps.

{\it Step 1. Show that $S$ is a nonempty, compact and convex subset of $X$.}

This fact can be easily checked. We omit the proof here.

{\it Step 2. $\Phi(u_1,u_2)$ is nonempty.} We prove that $\Phi(u_1,u_2)$ is nonempty for each $(u_1,u_2)\in S$.

We arbitrarily fix $(u_1,u_2)\in S$. According to (\ref{result-1})-(\ref{result-3}), it suffices to
show that $\Phi_1 u_1$ and $\Phi_2 u_2$ are nonempty. For this purpose, we introduce the following
auxiliary optimal control problem:
\begin{equation*}
{\rm\bf (P_{au})}\;\;\;\;\;\;\;\inf_{v_2\in {\mathcal U}_2} J_2(u_1,v_2).
\end{equation*}
Let
\begin{equation}\label{result-4}
d:=\inf_{v_2\in {\mathcal U}_2} J_2(u_1,v_2).
\end{equation}
It is obvious that $d\geq 0$. Let $\{v_{2,n}\}_{n\geq 1}\subseteq {\mathcal U}_2$ be a minimizing sequence
so that
\begin{equation}\label{result-5}
d=\lim_{n\rightarrow \infty} J_2(u_1,v_{2,n}).
\end{equation}
On one hand, since $\|v_{2,n}\|_{L^\infty(0,T;L^2(0,1))}\leq M_2$,
there exists a subsequence of $\{v_{2,n}\}_{n\geq 1}$, still denoted by itself, and
$v_{2,0}\in {\mathcal U}_2$, so that
\begin{equation}\label{result-5:1}
v_{2,n}\rightarrow v_{2,0}\;\;\mbox{weakly star in}\;\;L^\infty(0,T;L^2(0,1)).
\end{equation}

We denote by $\bar{y}_n(\cdot):=y(\cdot; y_0,g,u_1,v_{2,n})-y(\cdot; y_0,g,u_1,v_{2,0})$. According to (\ref{Intro-1}), it is clear that
\begin{equation}
\label{result-5:2}
\left\{
\begin{array}{ll}
\bar{y}_{n,t}(x,t) = A\bar{y}_n(x,t) +\chi_{\omega_2}( v_{2,n}-v_{2,0}), & \left( x ,t\right) \in (0,1)\times(0,T),   \\[2mm]
\bar{y}_n(1,t)=BC_\alpha\big(\bar{y}_n(\cdot,t)\big) = 0, & t\in \left(0,T\right), \\[2mm]
\bar{y}_n\left(x, 0\right) =0, &  x\in (0,1).
\end{array}
\right.
\end{equation}
Multiplying the first equation of (\ref{result-5:2}) by $\bar{y}_n$ and integrating over $(0, 1)$, one deduces, for all $t \leq T$,
\begin{align}
\label{result-5:3}
\frac{1}{2}\frac{d}{dt}\|\bar{y}_n(t)\|^2+\|x^{\frac{\alpha}{2}}\bar{y}_{n,x}(t)\|^2&\leq \frac{1}{2}\|\bar{y}_n(t)\|^2+\frac{1}{2}\|(v_{2,n}-v_{2,0})(t)\|^2_{L^2(\omega_2)}.
\end{align}
By Gronwall's inequality, we have
\begin{align}
\label{result-5:4}
\sup_{t\in[0,T]}\|\bar{y}_n(t)\|^2\leq e^{T}\|v_{2,n}-v_{2,0}\|^2_{L^2(0,T;L^2(\omega_2))}.
\end{align}
Notice that, from (\ref{result-5:3}) and (\ref{result-5:4}), it also follows that
\begin{align}
\label{result-5:5}
\|x^{\frac{\alpha}{2}}\bar{y}_{n,x}\|^2_{L^2(0,T;L^2(0,1))}\leq \left[\frac{ e^{T}T}{2}+\frac{1}{2}\right]\|v_{2,n}-v_{2,0}\|^2_{L^2(0,T;L^2(\omega_2))}.
\end{align}
From these, we obtain
\begin{align}
\label{result-5:6}
\sup_{t\in[0,T]}\|\bar{y}_n(t)\|^2+\|x^{\frac{\alpha}{2}}\bar{y}_{n,x}\|^2_{L^2(0,T;L^2(0,1))}\leq C\|v_{2,n}-v_{2,0}\|^2_{L^2(0,T;L^2(\omega_2))},
\end{align}
where $C=C(T)$ independent of $n$.

It follows from (\ref{result-5:1}) and (\ref{result-5:6}) that there exists a subsequence of $\bar{y}_n$, still denoted by itself, and $\bar{y}\in C([0,T];L^2(0,1))\cap L^2(0,T;H_\alpha^1(0,1))$ such that
\begin{align}
\label{result-5:7}
\bar{y}_n\rightarrow \bar{y}\;\;\mbox{weakly in}\;\;C([0,T];L^2(0,1))\cap L^2(0,T;H_a^1(0,1)).
\end{align}
Since the embedding $H_\alpha^1(0,1)\hookrightarrow L^2(0,1)$ is compact, see \cite{sun2022fundamental}, also see section 6 in \cite{alabau2006carleman}, it implies that
\begin{align}
\label{result-5:73}
\bar{y}_n\rightarrow \bar{y}\;\;\mbox{strongly in}\;\;L^2(0,1).
\end{align}
Passing to the limit for $n\rightarrow\infty$ in (\ref{result-5:2}), by (\ref{result-5:1}) and (\ref{result-5:7}), we have
\begin{equation*}
    \left\{
\begin{array}{ll}
\bar{y}_{t}(x,t) = A\bar{y}(x,t), & \left( x ,t\right) \in (0,1)\times(0,T),   \\[2mm]
\bar{y}(1,t)=BC_\alpha\big(\bar{y}(\cdot,t)\big) = 0, & t\in \left(0,T\right), \\[2mm]
\bar{y}\left(x, 0\right) =0, &  x\in (0,1),
\end{array}
\right.
\end{equation*}
from which we obtain that $\bar{y} = 0$. Hence,
on the other hand, we have
\begin{equation}\label{result-6}
y(T; y_0,g,u_1,v_{2,n})\rightarrow y(T; y_0,g,u_1,v_{2,0}) ~\text{strongly in}~ L^2(0,1).
\end{equation}
It follows from (\ref{result-5}), (\ref{Intro-2}),  (\ref{result-6}) that
\begin{equation}\label{result-7}
d=J_2(u_1,v_{2,0}).
\end{equation}
Noting that $v_{2,0}\in {\mathcal U}_2$, by (\ref{result-4}), (\ref{result-7}) and (\ref{result-1}),
we obtain that $v_{2,0}\in \Phi_1 u_1$. This implies that $\Phi_1 u_1\not=\emptyset$. In the same way,
we also have that $\Phi_2 u_2\not=\emptyset$.

{\it Step 3. Convex subset $\Phi(u_1,u_2)$.} We show that $\Phi(u_1,u_2)$ is a convex subset of ${\mathcal U}_1\times {\mathcal U}_2$ for
each $(u_1,u_2)\in {\mathcal U}_1\times {\mathcal U}_2$.

We arbitrarily fix $(u_1,u_2)\in {\mathcal U}_1\times {\mathcal U}_2$.
According to (\ref{result-1})-(\ref{result-3}), it suffices to prove that
$\Phi_1 u_1$ is a  convex subset of ${\mathcal U}_2$. The convexity of $\Phi_2 u_2$ can
be similarly proved. For this purpose, we arbitrarily fix ${\widetilde u}_2, {\widehat u}_2\in \Phi_1 u_1$.
By (\ref{result-1}),
we get that
\begin{equation}\label{result-8}
{\widetilde u}_2,\;\;{\widehat u}_2\in {\mathcal U}_2,
\end{equation}
and
\begin{equation}\label{result-9}
J_2(u_1,{\widetilde u}_2)\leq J_2(u_1,v_2)\;\;\mbox{and}\;\;
J_2(u_1,{\widehat u}_2)\leq J_2(u_1,v_2)\;\;\mbox{for each}\;\;v_2\in {\mathcal U}_2.
\end{equation}
For any $\lambda\in [0,1]$, by (\ref{Intro-2}) and (\ref{Intro-1}), we have that
\begin{eqnarray*}
&&J_2(u_1,\lambda {\widetilde u}_2+(1-\lambda){\widehat u}_2)-[\lambda J_2(u_1,{\widetilde u}_2)+(1-\lambda)J_2(u_1,{\widehat u}_2)]\\[3mm]
&=&\|y(T;y_0,u_1,\lambda {\widetilde u}_2+(1-\lambda){\widehat u}_2)-y^2_T\|_{L^2(G_2)}-\lambda\|y(T;y_0,u_1,{\widetilde u}_2)-y^2_T\|_{L^2(G_2)}\\[3mm]
&&-(1-\lambda)\|y(T;y_0,u_1,{\widehat u}_2)-y^2_T\|_{L^2(G_2)}\\[3mm]
&=&\|\lambda [y(T;y_0,u_1,{\widetilde u}_2)-y^2_T]+(1-\lambda)[y(T;y_0,u_1,{\widehat u}_2)-y^2_T]\|_{L^2(G_2)}\\[3mm]
&&-\lambda\|y(T;y_0,u_1,{\widetilde u}_2)-y^2_T\|_{L^2(G_2)}-(1-\lambda)\|y(T;y_0,u_1,{\widehat u}_2)-y^2_T\|_{L^2(G_2)}\\[3mm]
&\leq&0.
\end{eqnarray*}
This, along with (\ref{result-8}) and (\ref{result-9}), yields that
\begin{equation*}
\lambda {\widetilde u}_2+(1-\lambda){\widehat u}_2\in {\mathcal U}_2
\end{equation*}
and
\begin{equation*}
J_2(u_1,\lambda{\widetilde u}_2+(1-\lambda){\widehat u}_2)\leq J_2(u_1,v_2)\;\;\mbox{for each}\;\;v_2\in {\mathcal U}_2,
\end{equation*}
which indicate that $\lambda{\widetilde u}_2+(1-\lambda){\widehat u}_2\in \Phi_1 u_1$ (see (\ref{result-1})). Hence, $\Phi_1 u_1$
is a convex subset of ${\mathcal U}_2$.

{\it Step 4. Prove that Graph $\Phi$ is closed.}

It suffices to show that if $(u_{n,1},u_{n,2})\in {\mathcal U}_1\times {\mathcal U}_2,
{\widetilde u}_{n,1}\in \Phi_2 u_{n,2}$, ${\widetilde u}_{n,2}\in \Phi_1 u_{n,1}$,
$(u_{n,1},u_{n,2})\rightarrow (u_1,u_2)$ in $X$
and $({\widetilde u}_{n,1},{\widetilde u}_{n,2})\rightarrow ({\widetilde u}_1,{\widetilde u}_2)$  in
$X$, then
\begin{equation}\label{20180208-3}
(u_1,u_2)\in {\mathcal U}_1\times {\mathcal U}_2,\; {\widetilde u}_1\in \Phi_2 u_2\;\;
\mbox{and}\;\;{\widetilde u}_2\in \Phi_1 u_1.
\end{equation}
Indeed, on one hand, by (\ref{result-1}) and (\ref{result-2}), we can easily check that
\begin{equation}\label{result-10}
(u_1,u_2)\in {\mathcal U}_1\times {\mathcal U}_2,\; {\widetilde u}_1\in {\mathcal U}_1\;\;
\mbox{and}\;\;{\widetilde u}_2\in {\mathcal U}_2.
\end{equation}
On the other hand, according to ${\widetilde u}_{n,1}\in \Phi_2 u_{n,2}$, (\ref{result-2}) and (\ref{Intro-2}), it is obvious that for each $v_1\in {\mathcal U}_1$,
\begin{equation}\label{result-11}
\|y(T;y_0,g,{\widetilde u}_{n,1},u_{n,2})-y^1_T\|_{L^2(G_1)}\leq \|y(T;y_0,g,v_1,u_{n,2})-y^1_T\|_{L^2(G_1)}.
\end{equation}
Since $({\widetilde u}_{n,1},u_{n,2})\rightarrow ({\widetilde u}_1,u_2)$ weakly star in $(L^\infty(0,T;L^2(0,1)))^2$, by similar arguments as
those to get (\ref{result-6}), there exists a subsequence of $\{n\}_{n\geq1}$, still denoted by itself, so that
\begin{equation*}
\begin{array}{lll}
&(y(T;y_0,g,{\widetilde u}_{n,1},u_{n,2}),y(T;y_0,g,v_1,u_{n,2}))\\[3mm]
& \rightarrow (y(T;y_0,g,{\widetilde u}_1,u_2),y(T;y_0,g,v_1,u_2))
~\text{strongly in}~  (L^2(0,1))^2,
\end{array}
\end{equation*}
which, implies that
\begin{equation}\label{result-12}
    \begin{split}
        &(y(T;y_0,g,{\widetilde u}_{n,1},u_{n,2}),y(T;y_0,g,v_1,u_{n,2}))\\[3mm]
& \rightarrow (y(T;y_0,g,{\widetilde u}_1,u_2),y(T;y_0,g,v_1,u_2))
~\text{strongly in}~  (L^2(G_1))^2.
    \end{split}
\end{equation}
Passing to the limit for $n\rightarrow \infty$ in (\ref{result-11}), by (\ref{result-12}),
we get that for each $v_1\in {\mathcal U}_1$,
\begin{equation*}
\|y(T;y_0,g,{\widetilde u}_1,u_2)-y_T^1\|_{L^2(G_1)}\leq \|y(T;y_0,g,v_1,u_2)-y_T^1\|_{L^2(G_1)}.
\end{equation*}
This, together with (\ref{Intro-2}), (\ref{result-2}) and the second conclusion in (\ref{result-10}), implies that
${\widetilde u}_1\in \Phi_2 u_2$. Similarly, ${\widetilde u}_2\in \Phi_1 u_1$. Hence, (\ref{20180208-3}) follows.

{\it Step 5. Finish the proof.}

According to Steps 1-4 and Lemma~\ref{Intro-5}, there exists a pair of $(u_1^*,u_2^*)\in {\mathcal U}_1\times {\mathcal U}_2$
so that $(u_1^*,u_2^*)\in \Phi(u_1^*,u_2^*)$,
which, combined with (\ref{result-1})-(\ref{result-3}), indicates that
$(u_1^*,u_2^*)$ is a Stackelberg-Nash equilibrium of the problem {\bf(P1)}.

In summary, we end the proof of Theorem~\ref{Intro-3}.
\end{proof}

%\begin{remark}
%    Condition $G_1\subset\subset(0,1)$  is exclusively employed in the proof of \eqref{result-12} (similar to the condition $G_2\subset\subset(0,1)$). Without these conditions, the derivation of strong convergence in $L^2(G_i)$ for $i=1,2$  would be unattainable.
%    We also note that the embedding $H_a^1(0,1)\hookrightarrow L^2(0,1)$ is not compact in generally, see [].
%\end{remark}

\subsection{Decomposition and Characterization of Stackelberg-Nash Equilibria}
In this subsection, we first give some conclusions that will be used later. By Theorem \ref{thm:obs-inq}, we have the observability inequality for adjoint equation (\ref{eq:adjoint}).
\begin{corollary}\label{co-obine}
Let $\alpha\in(0,2)$, $\mu\in(0,1)$ and $\omega\times E\subset I\times(0,T)$  be measurable subsets with positive measures. Then there exists a constant $C=C(T, I,\alpha,\omega,E,\mu)\geq1$ such that the solution of (\ref{eq:adjoint})  satisfies the following observability inequality: for any $z_T\in L^2(0,1)$,
\begin{equation}
  \label{eq:obs-inq1}
  \|z(0)\|\le C \int_E\|\chi_{\omega} z(t)\|dt.
\end{equation}
\end{corollary}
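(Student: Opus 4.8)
The plan is to reduce the corollary to Theorem~\ref{thm:obs-inq} by a time-reversal. Since (\ref{eq:adjoint}) is the backward equation $z_t + Az = 0$ with terminal datum $z_T$, I would set $\tilde y(x,t) := z(x, T-t)$. A direct computation gives $\tilde y_t = -z_t(\cdot, T-t) = A z(\cdot, T-t) = A\tilde y$, so that $\tilde y$ solves the forward equation (\ref{eq:main}) with initial datum $\tilde y(\cdot,0) = z(\cdot, T) = z_T$; moreover $\tilde y(\cdot, T) = z(\cdot, 0) = z(0)$, and the boundary conditions are preserved under $t \mapsto T-t$. Thus $\tilde y$ is an admissible state for Theorem~\ref{thm:obs-inq}.

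Next I would choose the observation set. Writing $T - E := \{T - t : t \in E\}$, I would apply Theorem~\ref{thm:obs-inq} with $D := \omega \times (T-E) \subset I \times (0,T)$; since time reversal preserves Lebesgue measure and $|\omega|, |E| > 0$, we have $|D| = |\omega|\,|E| > 0$, so $D$ is an admissible measurable subset. Theorem~\ref{thm:obs-inq} then yields $\|\tilde y(T)\| \le C \int_D |\tilde y(x,t)|\,dx\,dt$. The change of variables $s = T - t$ converts the right-hand side back into an integral over $E$, namely $\int_D |\tilde y| = \int_{T-E}\int_\omega |z(x, T-t)|\,dx\,dt = \int_E \int_\omega |z(x,s)|\,dx\,ds$. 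Combined with $\|\tilde y(T)\| = \|z(0)\|$, this gives $\|z(0)\| \le C \int_E \int_\omega |z(x,s)|\,dx\,ds$.

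The only genuine discrepancy with the statement is that Theorem~\ref{thm:obs-inq} produces the spatial $L^1$ integrand $\int_\omega |z(x,s)|\,dx$, whereas the corollary is phrased with the spatial $L^2$ norm $\|\chi_\omega z(s)\|$. Because $\omega \subset I$ has finite measure, I would close this gap by Cauchy--Schwarz in $x$: $\int_\omega |z(x,s)|\,dx \le |\omega|^{1/2} \big(\int_\omega |z(x,s)|^2\,dx\big)^{1/2} = |\omega|^{1/2}\,\|\chi_\omega z(s)\|$. Substituting this bound and absorbing the factor $|\omega|^{1/2}$ into the constant yields $\|z(0)\| \le C \int_E \|\chi_\omega z(t)\|\,dt$, which is precisely (\ref{eq:obs-inq1}). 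I do not anticipate a serious obstacle: the whole argument is a change of variables together with a single application of Theorem~\ref{thm:obs-inq}, and the mild $L^1$-versus-$L^2$ mismatch in space is handled cleanly by Cauchy--Schwarz on the bounded set $\omega$.
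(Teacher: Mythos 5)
Your proof is correct and is exactly the argument the paper intends: the paper states Corollary \ref{co-obine} with only the one-line justification ``By Theorem \ref{thm:obs-inq},'' and your time-reversal $\tilde y(x,t)=z(x,T-t)$ with observation set $D=\omega\times(T-E)$ is the standard way to fill in that reduction. The final Cauchy--Schwarz step also goes in the right direction, since the $L^1$-in-space observability of Theorem \ref{thm:obs-inq} is the stronger statement and dominates the $L^2$-in-space bound up to the factor $|\omega|^{1/2}$.
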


\begin{lemma}
\label{lemma-main}
Let $g\in L^\infty(0,T;L^2(0,1))$ be given. Assume $\omega,\omega_0$ are measurable subsets of $(0,1)$with positive measures. Let $\omega_0\subset G$, where $G$ is a measurable subset of $(0,1)$ and $\zeta(\cdot;\zeta_0,g,u)$ be the solution to the following system:
\begin{equation}
\label{model-zeta}
\left\{
\begin{array}{ll}
\zeta_t(x,t) = A\zeta(x,t) +\chi_\omega g+\chi_{\omega_0} u, & \left( x ,t\right) \in (0,1)\times(0,T),   \\[2mm]
\zeta(1,t)=BC_\alpha\big(\zeta(\cdot,t)\big) = 0, & t\in \left(0,T\right), \\[3mm]
\zeta\left(x, 0\right) =\zeta_{0}(x), &  x\in (0,1)\,,
\end{array}
\right.
\end{equation}
where $u\in \cU$ with
$$
\cU:=\big\{u\in L^\infty(0,T;L^2(0,1)):\|u\|_{L^\infty(0,T;L^2(0,1))}\leq M  \big\},
$$
with $M>0$. Consider the following problem:
\begin{equation}
\label{lemma-P}
\inf_{u\in \cU}\|\zeta(T;\zeta_0,g,u)-\zeta_T\|_{L^2(G)},
\end{equation}
where $\zeta_T\in L^2(0,1)$ is a given function. Then there exists $u^*\in \cU$ such that
$$
\|\zeta(T;\zeta_0,g,u^*)-\zeta_T\|_{L^2(G)}=\inf_{u\in \cU}\|\zeta(T;\zeta_0,g,u)-\zeta_T\|_{L^2(G)}.
$$
Moreover, if $\zeta(T;\zeta_0,g,u^*)\neq\zeta_T$ , then $u^*$ enjoys the following form
\begin{equation}
\label{lemma-u}
u^*(t)=M\dfrac{\chi_{\omega_0} z(t)}{\|\chi_{\omega_0} z(t)\|_{L^2(G)}},~\text{for a.e.}~t\in (0,T).
\end{equation}
where $z$ is a solution solving the following equation:
\begin{equation}
\label{BSDE}
\left\{
\begin{array}{ll}
z_t(x,t) +Az(x,t)= 0, & \left( x ,t\right) \in (0,1)\times(0,T),   \\[2mm]
z(1,t)=BC_\alpha\big(z(\cdot,t)\big) = 0, & t\in \left(0,T\right), \\[2mm]
z\left(x, T\right) =\zeta_T-\zeta(T;\zeta_0,g,u^*), &  x\in (0,1).
\end{array}
\right.
\end{equation}
\end{lemma}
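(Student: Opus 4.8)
The plan is to treat \eqref{lemma-P} as a convex optimal control problem and argue in two stages: existence of a minimizer by the direct method, and the bang-bang characterization \eqref{lemma-u} through a first-order optimality condition dualized against the adjoint state \eqref{BSDE}.

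\emph{Existence.} Write $J(u):=\|\zeta(T;\zeta_0,g,u)-\zeta_T\|_{L^2(G)}$ and $d:=\inf_{u\in\cU}J(u)\ge 0$. Since $\cU$ is a closed ball of $L^\infty(0,T;L^2(0,1))=\left(L^1(0,T;L^2(0,1))\right)^*$, it is convex and weak-$*$ sequentially compact, so a minimizing sequence $\{u_n\}\subset\cU$ has a subsequence with $u_n\rightharpoonup u^*$ weakly-$*$ and $u^*\in\cU$. The only nontrivial point is that the control-to-final-state map $u\mapsto\zeta(T;\zeta_0,g,u)$ is weak-$*$-to-strong continuous into $L^2(0,1)$: the difference $\bar\zeta_n:=\zeta(\cdot;\zeta_0,g,u_n)-\zeta(\cdot;\zeta_0,g,u^*)$ solves the homogeneous-initial-data system driven by $\chi_{\omega_0}(u_n-u^*)$, and the energy estimates together with the compact embedding $H^1_\alpha(0,1)\hookrightarrow L^2(0,1)$ — exactly as in Step 2 of the proof of Theorem \ref{Intro-3} — give $\zeta(T;\zeta_0,g,u_n)\to\zeta(T;\zeta_0,g,u^*)$ strongly in $L^2(0,1)$, hence in $L^2(G)$. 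Consequently $d=\lim_n J(u_n)=J(u^*)$ and $u^*$ is a minimizer.

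\emph{Optimality condition.} Because $u\mapsto\zeta(T;\zeta_0,g,u)$ is affine and $\|\cdot\|_{L^2(G)}$ is convex, $J$ is convex, so the minimizer is characterized by the variational inequality $\frac{d}{d\epsilon}\big|_{\epsilon=0^+}J\big(u^*+\epsilon(v-u^*)\big)\ge 0$ for all $v\in\cU$. Under the hypothesis $\zeta(T;\zeta_0,g,u^*)\ne\zeta_T$ the residual $r:=\zeta(T;\zeta_0,g,u^*)-\zeta_T$ is nonzero in $L^2(G)$, the norm is differentiable there, and writing $Lh$ for the value at time $T$ of the solution with zero initial datum driven by $\chi_{\omega_0}h$, the inequality reduces to $\langle r, L(v-u^*)\rangle_{L^2(G)}\ge 0$ for all $v\in\cU$. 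I now dualize: since $A=(x^\alpha\,\cdot_x)_x$ is self-adjoint on $L^2(0,1)$, multiplying the equation for $Lh$ by the adjoint state $z$ of \eqref{BSDE} (whose terminal datum is the residual $\zeta_T-\zeta(T;\zeta_0,g,u^*)$, read on $G$) and integrating by parts yields $\langle L h, z(T)\rangle=\int_0^T\langle h(t),\chi_{\omega_0}z(t)\rangle\,dt$. Combining this with the variational inequality gives $\int_0^T\langle v(t)-u^*(t),\chi_{\omega_0}z(t)\rangle\,dt\le 0$ for every $v\in\cU$; that is, $u^*$ maximizes $v\mapsto\int_0^T\langle v(t),\chi_{\omega_0}z(t)\rangle\,dt$ over $\cU$.

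\emph{Bang-bang form and non-degeneracy.} This maximization decouples pointwise in $t$ under the sole constraint $\|v(t)\|\le M$, so the Cauchy–Schwarz inequality forces $u^*(t)=M\,\chi_{\omega_0}z(t)/\|\chi_{\omega_0}z(t)\|$ wherever $\chi_{\omega_0}z(t)\ne 0$; since $\omega_0\subset G$, this is precisely \eqref{lemma-u}. The last — and I expect main — obstacle is to rule out $\chi_{\omega_0}z(t)=0$ on a set of positive measure, so that the formula holds for a.e.\ $t$. Here I use that $z$ admits the eigenfunction expansion $z(t)=\sum_k b_k e^{-w_k^2(T-t)}e_k$, whence $t\mapsto\|\chi_{\omega_0}z(t)\|^2$ is real analytic on $(0,T)$; were it to vanish on a positive-measure set it would vanish identically, forcing $z\equiv 0$ on $\omega_0\times(0,T)$. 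Applying Corollary \ref{co-obine} with observation set $\omega_0\times(0,T)$ would then give $z(0)=0$, and backward uniqueness for \eqref{BSDE} would yield $\zeta_T-\zeta(T;\zeta_0,g,u^*)=0$, contradicting the standing hypothesis. Hence $\chi_{\omega_0}z(t)\ne 0$ for a.e.\ $t\in(0,T)$ and \eqref{lemma-u} holds almost everywhere, which completes the proof.
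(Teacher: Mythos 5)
Your proposal is correct and follows the paper's own proof essentially step for step: the direct method with weak-$*$ compactness and the compact embedding $H^1_\alpha(0,1)\hookrightarrow L^2(0,1)$ for existence, then the convex variational inequality, duality against the adjoint state of \eqref{BSDE}, and pointwise Cauchy--Schwarz maximization for the bang-bang form \eqref{lemma-u}. The only (minor) deviation is the non-degeneracy step: the paper applies Corollary \ref{co-obine} directly to $\omega_0\times E$ with $E$ the hypothetical vanishing set, whereas you first use time-analyticity of $t\mapsto\|\chi_{\omega_0}z(t)\|^2$ to upgrade vanishing on a positive-measure set to vanishing on all of $(0,T)$ and then invoke the observability inequality plus backward uniqueness — a harmless variant that in fact makes explicit the injectivity of the semigroup, which the paper uses tacitly.
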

\begin{proof}
We carry out this proof step by step.

{\it Step 1.} We show the existence of the problem (\ref{lemma-P}).

Note that $\zeta(\cdot;\zeta_0,g,u)=\zeta(\cdot;\zeta_0,g,0)+\zeta(\cdot;0,0,u)$, then the problem (\ref{lemma-P}) is equivalent to the
following problem:
$$
\inf_{u\in \cU}\|\zeta(T;0,0,u)-(\zeta_T-\zeta(T;\zeta_0,g,0))\|_{L^2(G)}.
$$
Let $\{u_n\}_{n\geq1}\subseteq\cU$ be the minimal sequence of problem (\ref{lemma-P}), i.e.,
\begin{equation}
\label{lemma-eq1}
\|\zeta(T;0,0,u_n)-(\zeta_T-\zeta(T;\zeta_0,g,0))\|_{L^2(G)}\rightarrow\inf_{u\in \cU}\|\zeta(T;0,0,u)-(\zeta_T-\zeta(T;\zeta_0,g,0))\|_{L^2(G)}.
\end{equation}
Since $\{u_n\}_{n\geq1}\subseteq\cU$, there exists a subsequence of $\{u_n\}_{n\geq1}$, still denoted by itself, and $u^*\in L^\infty(0,T;L^2(0,1))$ such that
$$
u_n\rightarrow u^*\quad\ \text{weakly star in}~\ L^\infty(0,T;L^2(0,1)).
$$
Note that $\zeta(\cdot; 0,0, u_n)$ is the solution of the following system
\begin{equation*}
\left\{
\begin{array}{ll}
\zeta(t; 0,0, u_n)_t(x) = A\zeta(t; 0,0, u_n)(x) +\chi_{\omega_0} u_n, & \left( x ,t\right) \in (0,1)\times(0,T),   \\[2mm]
\zeta(t; 0,0, u_n)(1)=BC_\alpha(\zeta(t; 0,0, u_n)(\cdot))
 = 0, & t\in \left(0,T\right), \\[2mm]
\zeta(0; 0,0, u_n)(x) =0, &  x\in (0,1).
\end{array}
\right.
\end{equation*}
By similar arguments as
those to get (\ref{result-6}), there exists a subsequence of $\{n\}_{n\geq1}$, still denoted by itself and $\zeta^*\in C([0,T];L^2(0,1))\cap L^2(0,T;H_a^1(0,1))$ such that
\begin{align}
\label{2.18.1}
\zeta(\cdot; 0,0, u_n)\rightarrow \zeta^*\;\;&\mbox{weakly in}\;\;C([0,T];L^2(0,1))\cap L^2(0,T;H_a^1(0,1))\nonumber\\
&~\text{and strongly in}~L^2(0,1).
\end{align}
Passing to the limit for $n\rightarrow\infty$ in (\ref{2.18.1}), we obtain that $\zeta^*$ is the solution of the following
system
$$
\left\{
\begin{array}{ll}
\zeta^*_t(x,t) = A\zeta^*(x,t) +\chi_{\omega_0} u^*, & \left( x ,t\right) \in (0,1)\times(0,T),   \\[2mm]
\zeta^*(1,t)=BC_\alpha\big(\zeta^*(\cdot,t)\big)
 = 0, & t\in \left(0,T\right), \\[2mm]
\zeta^*\left(x, 0\right) =0, &  x\in (0,1).
\end{array}
\right.
$$
This means that $\zeta^*=\zeta(\cdot; 0,0 ,u^*)$. Now, letting $n\rightarrow\infty$ in (\ref{lemma-eq1}) we obtain that
$$
\|\zeta(T;0,0,u^*)-(\zeta_T-\zeta(T;\zeta_0,g,0))\|_{L^2(G)}=\inf_{u\in \cU}\|\zeta(T;0,0,u)-(\zeta_T-\zeta(T;\zeta_0,g,0))\|_{L^2(G)}.
$$
This proves the existence of problem (\ref{lemma-P}). In other words, $u^*$ is the optimal control for the problem (\ref{lemma-P}).

{\it Step 2.} We characterize the optimal control $u^*$.

Denote $\hat{\zeta}_T=\zeta_T-\zeta(T;\zeta_0,g,0)$. Since $u^*$ is an optimal control, i.e., for each $u\in \cU$ we have
\begin{equation}
\label{lemma-eq2}
\|\zeta(T;0,0,u^*)-\hat{\zeta}_T\|_{L^2(G)}\leq\|\zeta(T;0,0,u)-\hat{\zeta}_T\|_{L^2(G)}.
\end{equation}
Set
$$
u_\lambda:= u^*+\lambda(u-u^*),\quad \lambda\in[0,1].
$$
It is obvious that $u_\lambda\in \cU$. Then, by (\ref{lemma-eq2}), we see
\begin{align*}
&\|\zeta(T;0,0,u^*)\|^2_{L^2(G)}-2\langle \zeta(T;0,0,u^*), \hat{\zeta}_T\rangle_{L^2(G)}\\
\leq& \|\zeta(T;0,0,u_\lambda)\|^2_{L^2(G)}-2\langle \zeta(T;0,0,u_\lambda), \hat{\zeta}_T\rangle_{L^2(G)}.
\end{align*}
After some calculations, the above inequality holds
$$
\|\zeta(T;0,0,u-u^*)\|_{L^2(G)}\lambda^2+2\lambda\langle \zeta(T;0,0,u-u^*),\zeta(T;0,0,u^*)- \hat{\zeta}_T\rangle_{L^2(G)}\geq0,
$$
which, implies that for all $u\in\cU$
\begin{equation}
\label{lemma-eq3}
\langle \zeta(T;0,0,u-u^*),\zeta(T;0,0,u^*)- \hat{\zeta}_T\rangle_{L^2(G)}\geq0.
\end{equation}
Noting that $\hat{\zeta}_T=\zeta_T-\zeta(T;\zeta_0,g,0)$, and plugging it into (\ref{BSDE}), then multiplying the first equation of (\ref{BSDE}) by $\zeta(\cdot;0,0,u-u^*)$ and integrating over $G\times(0,T)$, one deduces,
$$
-\langle \zeta(T;0,0,u-u^*),\zeta(T;0,0,u^*)- \hat{\zeta}_T\rangle_{L^2(G)} =\int_0^T \left\langle u(t)-u^*(t),\chi_{\omega_0} z(t)\right\rangle_{L^2(G)} dt.
$$
This, along with (\ref{lemma-eq3}) stands that
$$
\int_0^T \left\langle u(t)-u^*(t), \chi_{\omega_0} z(t)\right\rangle_{L^2(G)} dt\leq0, \text{for each}~ u\in\cU,
$$
which, implies
$$
\int_0^T\langle \chi_{\omega_0} z(t),u^*(t) \rangle_{L^2(G)} dt=\max_{u(\cdot)\in \cU} \int_0^T\langle \chi_{\omega_0} z(t),u(t) \rangle_{L^2(G)} dt.
$$
This is equivalent to the following condition:
\begin{equation}
\label{lemma-eq4}
\langle \chi_{\omega_0} z(t),u^*(t) \rangle_{L^2(G)} =\max_{u(\cdot)\in\cU}  \langle \chi_{\omega_0} z(t),u(t) \rangle_{L^2(G)}, ~\text{for}~a.e.\,\,t\in (0,T).
\end{equation}
By the condition $\zeta(T;\zeta_0,g,u^*)\neq \zeta_T$ and noting that $\hat{\zeta}_T-\zeta(T;0,0,u^*)=\zeta_T-\zeta(T;\zeta_0,g,0)-\zeta(T;0,0,u^*)=\zeta_T-\zeta(T;\zeta_0,g,u^*)$, it holds $\hat{\zeta}_T-\zeta(T;0,0,u^*)\neq0$. This, together with (\ref{eq:obs-inq1}) in Corollary \ref{co-obine},  implies
$$
\|\chi_{\omega_0} z(t)\|\neq0,~\text{for}~a.e. \,\,t\in(0,T).
$$
By (\ref{lemma-eq4}), we see
$$
\langle \chi_{\omega_0} z(t),u^*(t) \rangle_{L^2(G)} \leq M\|\chi_{\omega_0} z(t)\|.
$$
Denote
$$
u_0(t)=M\frac{\chi_{\omega_0} z(t)}{\|\chi_{\omega_0} z(t)\|}~\text{for a.e.}~t\in (0,T).
$$
It implies $u_0\in\cU$. Also, we have
$$
\langle \chi_{\omega_0} z(t),u_0(t) \rangle_{L^2(G)}=\left\langle \chi_{\omega_0} z(t),M\frac{\chi_{\omega_0} z(t)}{\|\chi_{\omega_0} z(t)\|} \right\rangle_{L^2(G)}=M\|\chi_{\omega_0} z(t)\|_{L^2(G)},~\text{for a.e.}~t\in (0,T),
$$
which, along with (\ref{lemma-eq4}) again, it holds
$$
\langle \chi_{\omega_0} z(t),u^*(t) \rangle_{L^2(G)}=M\|\chi_{\omega_0} z(t)\|_{L^2(G)}~\text{for a.e.}~t\in (0,T).
$$
Therefore, we have (\ref{lemma-u}). This completes the proof.
\end{proof}

Next, we shall obtain that the set of Stackelberg-Nash equilibria breaks down into three disjoint parts.

Let N be the set of Stackelberg-Nash equilibria of problem {\bf(P1)} with respect to the system (\ref{Intro-1}), and let
\begin{equation}
\label{eq-N}
\begin{array}{lll}
&&N_0=\big\{ (u_1^*,u_2^*)\in N:y(T,y_0,g,u_1^*,u_2^*)\neq y_T^1,y(T,y_0,g,u_1^*,u_2^*)\neq y_T^2 \big\},\\[3mm]
&&N_1=\big\{ (u_1^*,u_2^*)\in N:y(T,y_0,g,u_1^*,u_2^*)= y_T^1 \big\},\\[3mm]
&&N_2=\big\{ (u_1^*,u_2^*)\in N:y(T,y_0,g,u_1^*,u_2^*)= y_T^2 \big\}.\\[3mm]
\end{array}
\end{equation}
Clearly, $N_0,N_1,N_2$ are three disjoint subsets of $N$ and $N = N_0 \cup N_1 \cup N_2$.
\begin{proposition}
\label{pro1}
Let $g\in L^\infty(0,T;L^2(0,1))$ be given and $(u_1^*,u_2^*)\in N$. Then either
$$
\|u_1^*\|_{L^2(G_1)}=M_1,~\text{for a.e.}~ t\in(0,T),
$$
or
$$
\|u_2^*\|_{L^2(G_2)}=M_2,~\text{for a.e.}~ t\in(0,T).
$$

\end{proposition}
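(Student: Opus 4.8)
The plan is to read off each follower's equilibrium condition as a scalar optimal control problem of exactly the type handled by Lemma~\ref{lemma-main}, and then to invoke its bang-bang characterization \eqref{lemma-u}. Fix a Stackelberg-Nash equilibrium $(u_1^*,u_2^*)\in N$. By the defining inequalities \eqref{20180208-1} and \eqref{20180208-2}, the control $u_1^*$ minimizes $u_1\mapsto J_1(u_1,u_2^*)=\|y(T;y_0,g,u_1,u_2^*)-y_T^1\|_{L^2(G_1)}$ over $\mathcal{U}_1$, while $u_2^*$ minimizes $u_2\mapsto J_2(u_1^*,u_2)=\|y(T;y_0,g,u_1^*,u_2)-y_T^2\|_{L^2(G_2)}$ over $\mathcal{U}_2$. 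Each of these is precisely the problem \eqref{lemma-P}.

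For the second follower I would apply Lemma~\ref{lemma-main} with $\omega_0=\omega_2$, $G=G_2$, admissible bound $M=M_2$, and control $u=u_2$. The only point to check is that the remaining forcing in \eqref{Intro-1}, namely $\chi_\omega g+\chi_{\omega_1}u_1^*$ together with the initial datum $y_0$, is fixed throughout this subproblem; since Lemma~\ref{lemma-main} uses this data only through the affine splitting $\zeta(\cdot;\zeta_0,g,u)=\zeta(\cdot;\zeta_0,g,0)+\zeta(\cdot;0,0,u)$, one may absorb $\chi_{\omega_1}u_1^*$ into the ``free'' part $\zeta(\cdot;\zeta_0,g,0)$ without altering the argument. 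The hypotheses $\omega_2\subset G_2$ and the positivity of the relevant measures hold by the standing assumptions on the $G_i$. Consequently Lemma~\ref{lemma-main} yields: if $y(T;y_0,g,u_1^*,u_2^*)\neq y_T^2$, then $u_2^*$ has the form \eqref{lemma-u} with $\omega_0=\omega_2$, whence $\|u_2^*(t)\|_{L^2(G_2)}=M_2$ for a.e.\ $t\in(0,T)$. Running the same argument for the first follower (with $\omega_0=\omega_1$, $G=G_1$, $M=M_1$, and $\chi_{\omega_2}u_2^*$ absorbed into the free evolution) gives: if $y(T;y_0,g,u_1^*,u_2^*)\neq y_T^1$, then $\|u_1^*(t)\|_{L^2(G_1)}=M_1$ for a.e.\ $t\in(0,T)$.

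To finish, I would use the standing hypothesis \eqref{y1-y2} that $y_T^1\neq y_T^2$. The final state $y(T;y_0,g,u_1^*,u_2^*)$ is a single element of $L^2(0,1)$, so it cannot equal both $y_T^1$ and $y_T^2$; equivalently, at least one of the two conditions $y(T;y_0,g,u_1^*,u_2^*)\neq y_T^1$ or $y(T;y_0,g,u_1^*,u_2^*)\neq y_T^2$ must hold. By the previous paragraph the first triggers $\|u_1^*(t)\|_{L^2(G_1)}=M_1$ a.e., and the second triggers $\|u_2^*(t)\|_{L^2(G_2)}=M_2$ a.e., which is exactly the asserted dichotomy. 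In terms of the decomposition $N=N_0\cup N_1\cup N_2$ from \eqref{eq-N}, equilibria in $N_0$ saturate both controls, those in $N_1$ saturate $u_2^*$, and those in $N_2$ saturate $u_1^*$.

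The main obstacle, and the only genuinely non-routine step, is the reduction carried out in the second paragraph: one must verify that fixing the opponent's control and treating it as part of the inhomogeneous data legitimately places each follower's best-response problem inside the scope of Lemma~\ref{lemma-main}, and in particular that the non-degeneracy $\|\chi_{\omega_i}z(t)\|\neq 0$ required for \eqref{lemma-u} is available. This is where the observability Corollary~\ref{co-obine} enters internally, through the nonvanishing of the adjoint terminal datum $y_T^i-y(T;y_0,g,u_1^*,u_2^*)$, which is guaranteed precisely when the corresponding target is not reached.
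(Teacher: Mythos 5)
Your proposal is correct and follows essentially the same route as the paper: fix the opponent's control, view each follower's best response as an instance of Lemma~\ref{lemma-main} (whose bang-bang characterization \eqref{lemma-u} applies whenever the corresponding target $y_T^i$ is missed), and then invoke $y_T^1\neq y_T^2$ from \eqref{y1-y2} to conclude that at least one target must be missed, hence at least one control is saturated. In fact your write-up is more explicit than the paper's, which compresses the reduction into ``similar to the proof of Lemma~\ref{lemma-main}'' and leaves the final dichotomy step implicit.
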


\begin{proof}
Let $(u_1^*,u_2^*)\in N$. Then we have
$$
\begin{array}{lll}
J_1(u_1^*,u_2^*)\leq J_1(u_1,u_2^*),\,\, \forall\,u_1\in\cU_1,\\[3mm]
J_2(u_1^*,u_2^*)\leq J_1(u_1^*,u_2),\,\, \forall\,u_2\in\cU_2.
\end{array}
$$
Similar to the proof of Lemma \ref{lemma-main}, we obtain:

Case 1: if $y(T,y_0,g,u_1^*,u_2^*)\neq y_T^1$, then
  $$
  u^*_1(t)=M_1\dfrac{\chi_{\omega_1}z(t)}{\|\chi_{\omega_1}z(t)\|_{L^2(G_1)}},~\text{for a.e.}~t\in (0,T),
  $$
  where $z$ is a solution solving the following equation:
$$
\left\{
\begin{array}{ll}
z_t(x,t) +Az(x,t)= 0, & \left( x ,t\right) \in (0,1)\times(0,T),   \\[2mm]
z(1,t)=BC_\alpha\big(z(\cdot,t)\big) = 0, & t\in \left(0,T\right), \\[3mm]
z\left(x, T\right) =y_T^1-y(T,y_0,g,u_1^*,u_2^*), &  x\in (0,1).
\end{array}
\right.
$$
One can easily check that
$$
\|u_1^*\|_{L^2(G_1)}=M_1,~\text{for a.e.}~ t\in(0,T).
$$

Case 2: if $y(T,y_0,g,u_1^*,u_2^*)\neq y_T^2$, then
  $$
  u^*_2(t)=M_2\dfrac{\chi_{\omega_2}z(t)}{\|\chi_{\omega_2}z(t)\|_{L^2(G_2)}},~\text{for a.e.}~t\in (0,T),
  $$
  where $z$ is a solution solving the following equation:
  $$
\left\{
\begin{array}{ll}
z_t(x,t) +Az(x,t)= 0, & \left( x ,t\right) \in (0,1)\times(0,T),   \\[2mm]
z(1,t)=BC_\alpha\big(z(\cdot,t)\big)  = 0, & t\in \left(0,T\right), \\[3mm]
z\left(x, T\right) =y_T^2-y(T,y_0,g,u_1^*,u_2^*), &  x\in (0,1).
\end{array}
\right.
$$
One can easily check that
$$
\|u_2^*\|_{L^2(G_2)}=M_2,~\text{for a.e.}~ t\in(0,T).
$$
The proof is completed.
\end{proof}

\begin{remark}\label{N0}
    If Stackelberg-Nash equilibria $(u_1^*,u_2^*)\in N_0$, we have
    \begin{align*}
        u^*_1(t)=M_1\dfrac{\chi_{\omega_1}z(t)}{\|\chi_{\omega_1}z(t)\|_{L^2(G_1)}},~\text{for a.e.}~t\in (0,T),\\
u^*_2(t)=M_2\dfrac{\chi_{\omega_2}z(t)}{\|\chi_{\omega_2}z(t)\|_{L^2(G_2)}},~\text{for a.e.}~t\in (0,T).
    \end{align*}
\end{remark}

\begin{proposition}
\label{pro2}
Let $(u_1^*,u_2^*)\in N_1$, i.e.,
\begin{equation}
\label{pro2-eq-1}
u^*_2(t)=M_2\dfrac{\chi_{\omega_2}z(t)}{\|\chi_{\omega_2}z(t)\|_{L^2(G_2)}},~\text{for a.e.}~t\in (0,T),
\end{equation}
where $z$ is a solution for adjoint equation (\ref{BSDE}) with the terminal condition $z(T)=y_T^2-y(T;y_0,g,u_1,u_2^*)$, $u_1\in\cU_1$.

Denote
%\left\{\chi_\omega z \in L^1(0,T; L^2(0,1))\ \left|\
	%\begin{array}{lll}
	%\mbox{for every } s\in (0,T), \mbox{ there exists } z_{T,s}\in L^2(0,1)\\
	%\mbox{such that } z(\cdot)=e^{A(s-\cdot)}z_{T,s} \mbox{ on } [0,s]
	%\end{array}\right.\right\}

$$
A=
\left\{u_1\in\cU_1\ \left|\
\begin{array}{lll}
y(T,y_0,g,u_1,u_2^*)=y_T^1,\\[2mm]
\|y(T;y_0,g,u_1,u_2^*)-y^2_T\|_{L^2(G_2)}\leq\|y(T;y_0,g,u_1,u_2)-y^2_T\|_{L^2(G_2)},\\[2mm]
\forall\,u_2\in\cU_2.
\end{array}\right.
\right\},
$$
and
$$
B=
\left\{
\begin{array}{lll}
u_1\in\cU_1: y(T,y_0,g,u_1,u_2^*)=y_T^1
\end{array}
\right\}.
$$
Then, $A=B$.

\end{proposition}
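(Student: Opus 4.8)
The plan is to establish the two inclusions $A\subseteq B$ and $B\subseteq A$ separately. The first is immediate, and the real content lies in the second, which I would derive from the linearity of the state map in the follower control $u_1$ together with the Nash optimality that is already encoded in the pair $(u_1^*,u_2^*)$. The inclusion $A\subseteq B$ is trivial: the very first defining condition of $A$ is $y(T;y_0,g,u_1,u_2^*)=y_T^1$, which is precisely the condition defining $B$.

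For $B\subseteq A$, fix $u_1\in B$, so that $y(T;y_0,g,u_1,u_2^*)=y_T^1$; since $(u_1^*,u_2^*)\in N_1$ we also have $u_1^*\in B$, i.e.\ $y(T;y_0,g,u_1^*,u_2^*)=y_T^1$. The key step is to reduce $J_2(u_1,\cdot)$ to $J_2(u_1^*,\cdot)$. To this end observe that the difference $w:=y(\cdot;y_0,g,u_1,u_2)-y(\cdot;y_0,g,u_1^*,u_2)$ solves \eqref{Intro-1} with zero initial datum and source $\chi_{\omega_1}(u_1-u_1^*)$, the leader term $\chi_\omega g$ and the term $\chi_{\omega_2}u_2$ having cancelled; in particular $w$ does not depend on $g$, $y_0$ or $u_2$. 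Evaluating at $u_2=u_2^*$ and using $u_1,u_1^*\in B$ gives $w(T)=y_T^1-y_T^1=0$ in $L^2(0,1)$, and since $w(T)$ is independent of $u_2$ we conclude $y(T;y_0,g,u_1,u_2)=y(T;y_0,g,u_1^*,u_2)$ in $L^2(0,1)$, hence in $L^2(G_2)$, for every $u_2\in\cU_2$. Therefore
\[
J_2(u_1,u_2)=J_2(u_1^*,u_2)\qquad\text{for all }u_2\in\cU_2 .
\]

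With this identity the second defining inequality of $A$ follows from the equilibrium property. As $(u_1^*,u_2^*)$ is a Stackelberg-Nash equilibrium, \eqref{20180208-2} yields $J_2(u_1^*,u_2^*)\le J_2(u_1^*,u_2)$ for all $u_2\in\cU_2$; combining this with the displayed identity (once with $u_2=u_2^*$ and once with general $u_2$) gives
\[
J_2(u_1,u_2^*)=J_2(u_1^*,u_2^*)\le J_2(u_1^*,u_2)=J_2(u_1,u_2),\qquad\forall\,u_2\in\cU_2 ,
\]
which is exactly $\|y(T;y_0,g,u_1,u_2^*)-y^2_T\|_{L^2(G_2)}\le\|y(T;y_0,g,u_1,u_2)-y^2_T\|_{L^2(G_2)}$. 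Together with $u_1\in B$, this places $u_1$ in $A$, so $B\subseteq A$ and hence $A=B$.

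The one point deserving care is the reduction $J_2(u_1,\cdot)=J_2(u_1^*,\cdot)$: it rests on noting that any two members of $B$ differ by an element of the kernel of the linear map $v\mapsto y(T;0,0,v,0)$ (the contribution of the first follower to the terminal state), so that swapping $u_1$ for $u_1^*$ leaves the terminal state unchanged for \emph{every} choice of the second follower's control. Once this observation is made, the bang-bang representation \eqref{pro2-eq-1} of $u_2^*$ is not needed, and the statement collapses to the defining Nash inequality of $u_2^*$ against $u_1^*$.
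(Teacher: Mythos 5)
Your proof is correct, and it takes a genuinely different route from the paper's. The paper proves $B\subseteq A$ variationally: for $u_1\in B$ it posits a control $u_2\in\cU_2$ satisfying \eqref{pro2-P} (a minimizer of $J_2(u_1,\cdot)$ whose terminal state equals $y_T^1$), derives from perturbations $w_\lambda=u_2+\lambda(w_2-u_2)$ the inequality $\langle y(T;0,0,0,w_2-u_2),\,y_T^2-y_T^1\rangle_{L^2(G_2)}\le 0$, and then, running the bang--bang machinery of Lemma \ref{lemma-main} (which rests on the observability inequality of Corollary \ref{co-obine} and on \eqref{y1-y2}), identifies $u_2$ with the explicit control $M_2\,\chi_{\omega_2}e^{A(T-\cdot)}(y_T^2-y_T^1)\big/\left\|\chi_{\omega_2}e^{A(T-\cdot)}(y_T^2-y_T^1)\right\|_{L^2(G_2)}$, which by \eqref{pro2-eq-1} is exactly $u_2^*$; hence $u_2^*$ is a best response to $u_1$ and $u_1\in A$. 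You instead exploit pure linearity: any two elements of $B$ differ by an element of the kernel of the map $v\mapsto\int_0^T e^{A(T-s)}\chi_{\omega_1}v(s)\,ds$, so the terminal state, and hence $J_2$, is the same function of $u_2$ whether the first follower plays $u_1$ or $u_1^*$, and the Nash inequality \eqref{20180208-2} for $(u_1^*,u_2^*)$ transfers verbatim to $u_1$. Your route buys several things: it uses neither the representation \eqref{pro2-eq-1}, nor observability, nor $y_T^1\neq y_T^2$, and, more substantively, it avoids the paper's unsupported opening claim that there exists a minimizer of $J_2(u_1,\cdot)$ whose terminal state is exactly $y_T^1$ (the paper asserts this existence without proof; your argument shows a posteriori that $u_2^*$ is such a minimizer, whereas the paper assumes one up front). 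What the paper's route buys in exchange is the explicit closed form of the best response, which is what Remark \ref{remark-N1-N2} and Proposition \ref{pro4} subsequently quote; under your proof that formula is still recoverable, since once $A=B$ is known, \eqref{pro2-eq-1} with $z(T)=y_T^2-y_T^1$ yields it directly.
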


\begin{proof}
It is clear that $A\subseteq B$. We only need to show $B\subseteq A$.

Let $u_1\in B$. Then there exists an element $u_2\in\cU_2$ such that the following problem holds:
\begin{equation}
\label{pro2-P}
\left\{
\begin{array}{lll}
y(T,y_0,g,u_1,u_2)=y_T^1,\\[3mm]
\|y(T;y_0,g,u_1,u_2)-y^2_T\|_{L^2(G_2)}\leq\|y(T;y_0,g,u_1,w_2)-y^2_T\|_{L^2(G_2)},\,\,\forall\,w_2\in\cU_2.
\end{array}
\right.
\end{equation}
For every $w_2\in\cU_2$ and $\lambda\in(0,1)$, set $$w_\lambda=u_2+\lambda(w_2-u_2),$$ then $w_\lambda\in\cU_2$. Then for all $\lambda\in(0,1)$, we have$$
\|y(T;y_0,g,u_1,u_2)-y^2_T\|_{L^2(G_2)}\leq\|y(T;y_0,u_1,w_\lambda)-y^2_T\|_{L^2(G_2)},
$$
which shows that for all $\lambda\in(0,1)$,
$$
\|y(T;y_0,g,u_1,u_2)-y^2_T\|^2_{L^2(G_2)}\leq\|y(T;y_0,g,u_1,w_\lambda)-y^2_T\|^2_{L^2(G_2)}.
$$
This implies that for all $\lambda\in(0,1)$,
\begin{align*}
&2\langle y(T;y_0,g,u_1,w_\lambda)-y(T;y_0,g,u_1,u_2),y_T^2  \rangle_{L^2(G_2)}\\
&\leq\langle y(T;y_0,g,u_1,w_\lambda)-y(T;y_0,g,u_1,u_2),y(T;y_0,u_1,w_\lambda)+y(T;y_0,g,u_1,u_2)  \rangle_{L^2(G_2)}.
\end{align*}
In other words, for all $\lambda\in(0,1)$, we have
\begin{align*}
&2\lambda\langle y(T;0,0,0,w_2-u_2),y_T^2  \rangle_{L^2(G_2)}\\
&\leq\langle \lambda y(T;0,0,0,w_2-u_2),2y(T;y_0,g,u_1,u_2)+\lambda y(T;0,0,0,w_2-u_2)  \rangle_{L^2(G_2)}.
\end{align*}
Letting $\lambda\rightarrow0^+$, it holds
$$
\langle y(T;0,0,0,w_2-u_2),y_T^2  \rangle_{L^2(G_2)}\leq \langle y(T;0,0,0,w_2-u_2),y(T;y_0,g,u_1,u_2)\rangle_{L^2(G_2)}.
$$
This shows for all $w_2\in\cU_2$, it stands
$$
\langle y(T;0,0,0,w_2-u_2),y_T^2- y(T;y_0,g,u_1,u_2)\rangle_{L^2(G_2)}\leq0.
$$
In other words, for all $w_2\in\cU_2$, we have
$$
\langle y(T;0,0,0,w_2-u_2),y_T^2- y_T^1\rangle_{L^2(G_2)}\leq0.
$$
From these, we obtain
$$
\left\langle \int^T_0e^{A(T-t)}\chi_{\omega_2}(w_2-u_2)dt,y_T^2- y_T^1\right\rangle_{L^2(G_2)}\leq0.
$$
This implies that for all $w_2\in\cU_2$,
$$
\int^T_0\left\langle\chi_{\omega_2}(w_2-u_2),e^{A(T-t)}(y_T^2- y_T^1)\right\rangle_{L^2(G_2)}dt\leq0.
$$
Therefore, we obtain for all $w_2\in\cU_2$,
$$
\left\langle(w_2-u_2),\chi_{\omega_2}e^{A(T-t)}(y_T^2- y_T^1)\right\rangle_{L^2(0,T;L^2(G_2))}\leq0.
$$
This is equivalent to the following condition: for $a.e.\,\,t\in(0,T)$
$$
\langle \chi_{\omega_2} e^{A(T-t)}(y_T^2- y_T^1),u_2(t) \rangle_{L^2(G_2)} =\max_{w_2(\cdot)\in\cU_2}  \langle \chi_{\omega_2} e^{A(T-t)}(y_T^2- y_T^1),w_2(t) \rangle_{L^2(G_2)}.
$$
By the same line as the proof of Lemma \ref{lemma-main}, it holds
$$
u^*_2(t)=M_2\dfrac{\chi_{\omega_2}z(t)}{\|\chi_{\omega_2}z(t)\|_{L^2(G_2)}},~\text{for a.e.}~t\in (0,T),
$$
which, together with (\ref{pro2-eq-1}), shows
$$
\begin{array}{lll}
y(T,y_0,g,u_1,u_2^*)=y_T^1,\\[3mm]
\|y(T;y_0,g,u_1,u_2^*)-y^2_T\|_{L^2(G_2)}\leq\|y(T;y_0,g,u_1,w_2)-y^2_T\|_{L^2(G_2)},\,\,\forall\,w_2\in\cU_2.
\end{array}
$$
This implies $u_1\in A$. Hence, we have $B\subseteq A$. The proof is completed.
\end{proof}

\begin{proposition}
\label{pro3}
Let $(u_1^*,u_2^*)\in N_2$, i.e.,
\begin{equation}
\label{pro3-eq-1}
u^*_1(t)=M_1\dfrac{\chi_{\omega_1}z(t)}{\|\chi_{\omega_1}z(t)\|_{L^2(G_1)}},~\text{for a.e.}~t\in (0,T),
\end{equation}
where $z$ is a solution for adjoint equation (\ref{BSDE}) with the terminal condition $z(T)=y_T^1-y(T;y_0,g,u_1^*,u_2)$, $u_2\in\cU_2$.

Denote
$$
\bar{A}=
\left\{u_2\in\cU_2\ \left|\
\begin{array}{lll}
y(T,y_0,g,u_1^*,u_2)=y_T^2,\\[2mm]
\|y(T;y_0,g,u_1^*,u_2)-y^1_T\|_{L^2(G_1)}\leq\|y(T;y_0,g,u_1,u_2)-y^1_T\|_{L^2(G_1)},\\[2mm]
\forall\,u_1\in\cU_1.
\end{array}\right.
\right\},
$$
and
$$
\bar{B}=
\left\{
\begin{array}{lll}
u_1\in\cU_1: y(T,y_0,g,u_1^*,u_2)=y_T^2
\end{array}
\right\}.
$$
Then, $\bar{A}=\bar{B}$.

\end{proposition}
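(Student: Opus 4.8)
The plan is to follow the proof of Proposition~\ref{pro2} verbatim, simply interchanging the roles of the two followers, i.e. the indices $1\leftrightarrow 2$, the targets $y_T^1\leftrightarrow y_T^2$, the control domains $\omega_1\leftrightarrow\omega_2$, and the observation sets $G_1\leftrightarrow G_2$ (here $\bar B$ is understood as ranging over $u_2\in\cU_2$, in accordance with its defining relation $y(T,y_0,g,u_1^*,u_2)=y_T^2$). The inclusion $\bar A\subseteq\bar B$ is immediate, since the first line in the definition of $\bar A$ is precisely the defining condition of $\bar B$. Hence everything reduces to proving $\bar B\subseteq\bar A$.

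First I would fix $u_2\in\bar B$, so that $y(T;y_0,g,u_1^*,u_2)=y_T^2$, and show that $u_1^*$ is follower~$1$'s best response against this $u_2$, namely that
$$\|y(T;y_0,g,u_1^*,u_2)-y_T^1\|_{L^2(G_1)}\le\|y(T;y_0,g,u_1,u_2)-y_T^1\|_{L^2(G_1)},\quad\forall\,u_1\in\cU_1.$$
To this end I would reuse the variational computation of Lemma~\ref{lemma-main}: writing $u_{1,\lambda}=u_1^*+\lambda(u_1-u_1^*)$ for $u_1\in\cU_1$, expanding the squared cost and letting $\lambda\to0^+$, the optimality of $u_1^*$ against $u_2$ becomes equivalent to the Pontryagin-type maximum condition
$$\langle \chi_{\omega_1}z(t),u_1^*(t)\rangle_{L^2(G_1)}=\max_{w_1(\cdot)\in\cU_1}\langle \chi_{\omega_1}z(t),w_1(t)\rangle_{L^2(G_1)},\quad\text{for a.e. }t\in(0,T),$$
where $z$ solves the adjoint equation~\eqref{BSDE} with terminal datum $z(T)=y_T^1-y(T;y_0,g,u_1^*,u_2)$.

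The crucial observation---and the only place where membership in $\bar B$ enters---is that the adjoint equation~\eqref{BSDE} is homogeneous, so $z(t)=e^{A(T-t)}z(T)$ is determined solely by its terminal value. Since $u_2\in\bar B$ forces $y(T;y_0,g,u_1^*,u_2)=y_T^2$, the terminal datum collapses to $z(T)=y_T^1-y_T^2$, which is independent of $u_2$ and coincides exactly with the terminal condition that generates $u_1^*$ in~\eqref{pro3-eq-1} for the given $N_2$-equilibrium. Consequently the maximum condition above is literally the same condition that $u_1^*$ already satisfies as the first component of $(u_1^*,u_2^*)\in N_2$; therefore $u_1^*$ is follower~$1$'s best response against every $u_2\in\bar B$, which places $u_2$ in $\bar A$. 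Combined with $\bar A\subseteq\bar B$, this yields $\bar A=\bar B$.

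The only delicate point I expect is the well-posedness of the bang-bang representation, which must be checked once: because $y_T^1\ne y_T^2$ by~\eqref{y1-y2}, the terminal datum $y_T^1-y_T^2$ is nonzero, so Corollary~\ref{co-obine} applied to $z$ gives $\|\chi_{\omega_1}z(t)\|\ne0$ for a.e.\ $t\in(0,T)$; this guarantees that the maximizer in the Pontryagin condition is the admissible control $M_1\,\chi_{\omega_1}z(t)/\|\chi_{\omega_1}z(t)\|_{L^2(G_1)}$ and that $u_1^*$ agrees with it a.e., exactly as in~\eqref{pro3-eq-1}. All remaining ingredients---existence of the best response, weak-star compactness of minimizing sequences, and strong convergence of the terminal states---are identical to those already established in Lemma~\ref{lemma-main} and may simply be invoked.
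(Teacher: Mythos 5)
Your proof is correct, but it runs the optimality argument in the opposite logical direction from the paper, whose proof of Proposition~\ref{pro3} is given only by reference to that of Proposition~\ref{pro2}. The paper's argument (transposed to the present indices) fixes $u_2\in\bar B$, \emph{asserts} the existence of some $u_1\in\cU_1$ that simultaneously steers the state to $y_T^2$ and is a best response of follower $1$ to $u_2$ (the analogue of \eqref{pro2-P}), and then uses the necessity direction of the variational computation: optimality forces the Pontryagin maximum condition, the state constraint collapses the adjoint datum to $y_T^1-y_T^2$, and the resulting bang-bang formula identifies that $u_1$ with $u_1^*$, whence $u_2\in\bar A$. You instead start from the known representation \eqref{pro3-eq-1} of $u_1^*$ and verify that it satisfies the first-order condition of the best-response problem against every $u_2\in\bar B$, then invoke \emph{sufficiency} of that condition. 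This reversal buys something genuine: the joint-existence assertion in \eqref{pro2-P} is stated in the paper without justification (a best response to a given control need not a priori keep the terminal state pinned at the target), and your argument needs no such assertion. The price is that sufficiency of the maximum condition is not what Lemma~\ref{lemma-main} proves---that lemma establishes only necessity---so your claim of ``equivalence'' needs one explicit line: since $u_1\mapsto y(T;y_0,g,u_1,u_2)$ is affine, the elementary inequality $\|a+v\|^2\geq\|a\|^2+2\langle a,v\rangle$ with $a=y(T;y_0,g,u_1^*,u_2)-y_T^1$ and $v=y(T;0,0,w_1-u_1^*,0)$ reduces optimality of $u_1^*$ to nonnegativity of $\langle a,v\rangle_{L^2(G_1)}$ for all $w_1\in\cU_1$, and the duality identity in Lemma~\ref{lemma-main} shows this is exactly the maximum condition. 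Your treatment of the nondegeneracy (via \eqref{y1-y2} and Corollary~\ref{co-obine}) and your reading of the typo in the definition of $\bar B$ agree with the paper. With that convexity sentence added, your proof is complete, and in fact tighter than the one the paper defers to.
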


\begin{proof}
Since the proof is similar to that of Proposition \ref{pro2}, we here omit the detail.
\end{proof}

\begin{remark}
\label{remark-N1-N2}
Under the assumption of Proposition \ref{pro2}, if $u_1\in A$, then $u_1\in\cU_1$, and
$$
\begin{array}{lll}
&&y(T,y_0,g,u_1,u_2^*)=y_T^1,\\[3mm]
&&\|y(T;y_0,g,u_1,u_2^*)-y^2_T\|_{L^2(G_2)}\leq\|y(T;y_0,g,u_1,u_2)-y^2_T\|_{L^2(G_2)},\,\,\forall\,u_2\in\cU_2.
\end{array}
$$
Thus,
$$
\begin{array}{lll}
&&\|y(T;y_0,g,u_1,u_2^*)-y^1_T\|_{L^2(G_1)}=0\leq\|y(T;y_0,g,w_1,u_2^*)-y^1_T\|_{L^2(G_1)},\,\,\forall\,w_1\in\cU_1,\\[3mm]
&&\|y(T;y_0,g,u_1,u_2^*)-y^2_T\|_{L^2(G_2)}\leq\|y(T;y_0,g,u_1,u_2)-y^2_T\|_{L^2(G_2)},\,\,\forall\,u_2\in\cU_2.
\end{array}
$$
These imply $(u_1,u_2^*)\in N$. i.e.,
$$
A=\{ u_1\in\cU_1: (u_1,u_2^*)\in N, y(T,y_0,g,u_1,u_2^*)=y_T^1 \}.
$$
Therefore, Proposition \ref{pro2} tells us: if
$$
u^*_2=M_2\dfrac{\chi_{\omega_2}e^{A(T-\cdot)}(y_T^2- y_T^1)}{\|\chi_{\omega_2}e^{A(T-\cdot)}(y_T^2- y_T^1)\|_{L^2(G_2)}},
$$
then
$$
A=\{ u_1\in\cU_1: (u_1,u_2^*)\in N, y(T,y_0,g,u_1,u_2^*)=y_T^1 \}=\{ u_1\in\cU_1: y(T,y_0,g,u_1,u_2^*)=y_T^1 \}.
$$
Similarly, Proposition \ref{pro3} tells us: if
$$
u^*_1=M_1\dfrac{\chi_{\omega_1}e^{A(T-\cdot)}(y_T^1- y_T^2)}{\|\chi_{\omega_1}e^{A(T-\cdot)}(y_T^1- y_T^2)\|_{L^2(G_1)}},
$$
then
$$
\bar{A}=\{ u_2\in\cU_2: (u_1^*,u_2)\in N, y(T,y_0,g,u_1^*,u_2)=y_T^2 \}=\{ u_2\in\cU_2: y(T,y_0,g,u_1^*,u_2)=y_T^2 \}.
$$

\end{remark}
From Remark \ref{remark-N1-N2}, we obtain the following further characterizations of $N_1$ and $N_2$.

\begin{proposition}
\label{pro4}
Let $g\in L^\infty(0,T;L^2(0,1))$ be given. Then we have the following characterizations:
$$
N_1=
\left\{(u_1^*,u_2^*)\in(\cU_1\times\cU_2)\ \left|\
\begin{array}{lll}
y(T,y_0,g,u_1^*,u_2^*)=y_T^1,\\[2mm]
u^*_2=M_2\dfrac{\chi_{\omega_2}e^{A(T-\cdot)}(y_T^2- y_T^1)}{\|\chi_{\omega_2}e^{A(T-\cdot)}(y_T^2- y_T^1)\|_{L^2(G_2)}}.
\end{array}\right.
\right\}
$$
and
$$
N_2=
\left\{(u_1^*,u_2^*)\in(\cU_1\times\cU_2)\ \left|\
\begin{array}{lll} y(T,y_0,g,u_1^*,u_2^*)=y_T^2,\\[2mm]
u^*_1=M_1\dfrac{\chi_{\omega_1}e^{A(T-\cdot)}(y_T^1- y_T^2)}{\|\chi_{\omega_1}e^{A(T-\cdot)}(y_T^1- y_T^2)\|_{L^2(G_1)}}.
\end{array}\right.
\right\}.
$$
\end{proposition}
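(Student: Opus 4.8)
The plan is to establish each of the two set equalities by a double inclusion, reducing everything to the representation already obtained in Lemma~\ref{lemma-main}, Proposition~\ref{pro2}, Proposition~\ref{pro3} and Remark~\ref{remark-N1-N2}. I will carry out the argument for $N_1$ in full; the case of $N_2$ is entirely symmetric, with the roles of the two followers interchanged and Proposition~\ref{pro3} used in place of Proposition~\ref{pro2}.

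First I would prove the inclusion $\subseteq$. Fix $(u_1^*,u_2^*)\in N_1$. By the definition \eqref{eq-N} we immediately have $y(T,y_0,g,u_1^*,u_2^*)=y_T^1$, which is the first defining relation on the right-hand side. For the second, note that the Nash condition \eqref{20180208-2} says precisely that $u_2^*$ minimizes $J_2(u_1^*,\cdot)=\|y(T;y_0,g,u_1^*,\cdot)-y_T^2\|_{L^2(G_2)}$ over $\mathcal U_2$. Since $y(T,y_0,g,u_1^*,u_2^*)=y_T^1\neq y_T^2$ by \eqref{y1-y2}, the optimal state differs from the target $y_T^2$, so the hypothesis ``$\zeta(T;\zeta_0,g,u^*)\neq\zeta_T$'' of Lemma~\ref{lemma-main} holds with the identifications $\omega_0=\omega_2$, $G=G_2$, $M=M_2$, $\zeta_T=y_T^2$, and the fixed source $\chi_\omega g+\chi_{\omega_1}u_1^*$. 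The representation \eqref{lemma-u} then yields
\[
u_2^*(t)=M_2\frac{\chi_{\omega_2}z(t)}{\|\chi_{\omega_2}z(t)\|_{L^2(G_2)}},
\]
where $z$ solves \eqref{BSDE} with terminal datum $z(T)=y_T^2-y(T;y_0,g,u_1^*,u_2^*)=y_T^2-y_T^1$. It then remains to identify $z$ explicitly: since \eqref{BSDE} is the backward equation $z_t+Az=0$, the change of variable $t\mapsto T-t$ shows $z(t)=e^{A(T-t)}(y_T^2-y_T^1)$, and substituting this gives exactly the stated form of $u_2^*$.

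For the reverse inclusion $\supseteq$, I would invoke Remark~\ref{remark-N1-N2} directly. Suppose $(u_1^*,u_2^*)\in\mathcal U_1\times\mathcal U_2$ satisfies $y(T,y_0,g,u_1^*,u_2^*)=y_T^1$ together with $u_2^*=M_2\chi_{\omega_2}e^{A(T-\cdot)}(y_T^2-y_T^1)/\|\chi_{\omega_2}e^{A(T-\cdot)}(y_T^2-y_T^1)\|_{L^2(G_2)}$. With this specific $u_2^*$, Remark~\ref{remark-N1-N2} identifies the set $A$ of Proposition~\ref{pro2} both as $\{u_1\in\mathcal U_1:y(T,y_0,g,u_1,u_2^*)=y_T^1\}$ and as $\{u_1\in\mathcal U_1:(u_1,u_2^*)\in N,\ y(T,y_0,g,u_1,u_2^*)=y_T^1\}$. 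Since $u_1^*$ lies in the former by hypothesis, it lies in the latter, so $(u_1^*,u_2^*)\in N$ with $y(T,y_0,g,u_1^*,u_2^*)=y_T^1$; by \eqref{eq-N} this is exactly $(u_1^*,u_2^*)\in N_1$.

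The computation for $N_2$ runs along the same lines, using the Nash condition \eqref{20180208-1} together with Proposition~\ref{pro3} to obtain the bang-bang form of $u_1^*$ with $z(t)=e^{A(T-t)}(y_T^1-y_T^2)$, and the second half of Remark~\ref{remark-N1-N2} for the reverse inclusion. I expect the only genuinely delicate point to be the well-definedness of the quotient defining $u_2^*$ (respectively $u_1^*$): one must know that $\|\chi_{\omega_2}e^{A(T-\cdot)}(y_T^2-y_T^1)\|_{L^2(G_2)}\neq0$ for a.e. $t$, which is guaranteed by the observability inequality \eqref{eq:obs-inq1} of Corollary~\ref{co-obine} applied to the nonzero terminal state $y_T^2-y_T^1$, exactly as in the proof of Lemma~\ref{lemma-main}.
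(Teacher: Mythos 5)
Your proof is correct and follows essentially the same route as the paper: the forward inclusion is exactly the application of Lemma \ref{lemma-main} (with the Nash condition \eqref{20180208-2} and $y_T^1\neq y_T^2$ guaranteeing $\zeta(T)\neq\zeta_T$, and the identification $z(t)=e^{A(T-t)}(y_T^2-y_T^1)$), while the reverse inclusion is precisely the content of Remark \ref{remark-N1-N2}, which is what the paper invokes when it states Proposition \ref{pro4} as a direct consequence of that remark. Your write-up merely makes the double-inclusion structure explicit, including the well-definedness of the quotient via Corollary \ref{co-obine}, which matches the paper's own reasoning.
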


In conclusion, Theorem \ref{Intro-4} is a direct consequence of (\ref{eq-N}), Proposition \ref{pro1} and Proposition \ref{pro4}.

\section{The norm optimal control problem}
In this section, we discuss the norm optimal control problem {\bf(P2)}. At first, we study the existence, i.e., Lemma~\ref{lemma-g-exist}. Then the leader control $g$ is characterized
in terms of the solution of a variational problem, i.e., Theorem \ref{9.15.1}.

\begin{proof}[\textbf{Proof of Lemma~\ref{lemma-g-exist}}]

 Let $\{g_n\}_{n\geq 1}\subset\cU_0$ be a minimizing sequence of problem {\bf(P2)} corresponding a  Stackelberg-Nash equilibrium sequence $(u_{1,n}^*(g_n),u_{2,n}^*(g_n))\in N=N_0\cup N_1\cup N_2, N\subset \cU_1\times\cU_2$ such
that
$$
\|g_n\|_{L^\infty(0,T;L^2(0,1))}\rightarrow\inf_{g\in \cU_0}\|g\|_{L^\infty(0,T;L^2(0,1))}.
$$
Since $\{g_n\}_{n=1}^\infty\subset L^\infty(0,T;L^2(0,1))$ is bounded, then there exists a subsequence, still denoted by itself, and $g^*\in L^\infty(0,T; L^2(0,1))$ such that
\begin{equation}\label{5.15.2}
    g_n\rightarrow g^* \mbox{ weakly star in } L^\infty(0,T;L^2(\Omega))
\end{equation}
with
\begin{equation}\label{5.15.1}
    \|g^*\|_{L^\infty(0,T;L^2(\Omega))}\leq \inf_{g\in\cU_0}\|g\|_{L^\infty(0,T;L^2(0,1))}.
\end{equation}
Note that $\|u_1^*(g_n)\|_{L^\infty(0,T;L^2(0,1))}\leq M_1, \|u_2^*(g_n)\|_{L^\infty(0,T;L^2(0,1))}\leq M_2$, then there exists a subsequence of $\{(u_1^*(g_n), u_2^*(g_n))\}_{n=1}^\infty \subset (L^\infty(0,T;L^2(0,1)))^2$, still denoted by itself, and $(u_1^*,u_2^*)\in (L^\infty(0,T;L^2(0,1)))^2$, such that
\begin{equation}\label{5.15.3}
    u_1^*(g_n)\rightarrow u_1^*, \mbox{and } u_2^*(g_n)\rightarrow u_2^* \mbox{ weakly star in } L^\infty(0,T; L^2(0,1)).
\end{equation}
According to
$y(T; y_0, g_n, u_1^*(g_n), u_2^*(g_n))=0$, we get $y(T; y_0, g^*,u_1^*,u_2^*)=0$ by \eqref{5.15.2} and \eqref{5.15.3}. By \eqref{5.15.1}, we only need to show $u_1^*=u_1^*(g^*), u_2^*=u_2^*(g^*)$.

It is obviously that there exists a subsequence of $\{(u_{1,n}^*(g_n), u_{2,n}^*(g_n))\}_{n=1}^\infty$, by extracting a subsequence (the subsequence is also a minimizing sequence of Problem {\bf(P2)}), still denoted by itself, such that the sequence  $\{(u_{1,n}^*(g_n), u_{2,n}^*(g_n))\}_{n=1}^\infty$ is contained in $N_0$, $N_1$ or $N_2$. This implies the following three cases:

Case $1$. If $(u_{1,n}^*(g_n),u_{2,n}^*(g_n))\in N_0$, i.e., $y_T^1\neq 0\neq y_T^2$, by Remark \ref{N0} and $(A_2)$, we have
\begin{align*}
u^*_{1,n}(g_n)=M_1\dfrac{\chi_{\omega_1}z_{1,n}}{\|\chi_{\omega_1}z_{1,n}\|_{L^2(G_1)}},\quad
u^*_{2,n}(g_n)=M_2\dfrac{\chi_{\omega_2}z_{2,n}}{\|\chi_{\omega_2}z_{2,n}\|_{L^2(G_2)}},
 \end{align*}  
where $z_{i,n}$ are the solutions of following systems
\begin{equation}\label{model77}
\left\{
\begin{array}{ll}
z_{i,n,t}(,t) +Az_{i,n}(x,t)= 0, & \left( x ,t\right) \in (0,1)\times(0,T),   \\[2mm]
z_{i,n}(1,t)=BC_\alpha\big(z_{i,n}(\cdot,t)\big)
= 0, & t\in \left(0,T\right), \\[3mm]
z_{i,n}\left( T\right) =y_T^i, &  x\in (0,1),
\end{array}
\right.
\end{equation}
for $i=1,2$, respectively.

Note that $z_{i,n}$ are independent on $n$, then $u_{1,n}^*(g_n)=u_{1}^*, u_{2,n}^*(g_n)=u_2^*$, i.e., $$
u^*_{1,n}(g_n)=M_1\dfrac{\chi_{\omega_1}z_{1}}{\|\chi_{\omega_1}z_{1}\|_{L^2(G_1)}},\quad\
u^*_{2,n}(g_n)=M_2\dfrac{\chi_{\omega_2}z_{2}}{\|\chi_{\omega_2}z_{2}\|_{L^2(G_2)}},
    $$
    where $(i=1,2)$
\begin{equation*}
\left\{
\begin{array}{ll}
z_{i,t}(x,t) +Az_{i}(x,t)= 0, & \left( x ,t\right) \in (0,1)\times(0,T),   \\[2mm]
z_{i}(1,t)=BC_\alpha\big(z_{i}(\cdot,t)\big)
 = 0, & t\in \left(0,T\right), \\[3mm]
z_{i}\left( T\right) =y_T^i, &  x\in (0,1).
\end{array}
\right.
\end{equation*}
Hence,
\begin{equation*}
 u^*_{1}(g^*)=M_1\dfrac{\chi_{\omega_1}z_{1}}{\|\chi_{\omega_1}z_{1}\|_{L^2(G_1)}},\quad\
u^*_{2}(g^*)=M_2\dfrac{\chi_{\omega_2}z_{2}}{\|\chi_{\omega_2}z_{2}\|_{L^2(G_2)}}
\end{equation*}
by \eqref{5.15.3} and Remark \ref{N0}.

Case $2$. If $(u_{1,n}^*(g_n),u_{2,n}^*(g_n))\in N_1$, i.e., $y_T^1=0$, by Proposition \ref{pro2} and assumption $(A_2)$, we have
$$
u^*_{2,n}(g_n)=M_2\dfrac{\chi_{\omega_2}z_{2,n}}{\|\chi_{\omega_2}z_{2,n}\|_{L^2(G_2)}},
    $$
where $z_{2,n}$ is the solution of following system
\begin{equation*}
\left\{
\begin{array}{ll}
z_{2,n,t}(x,t) +Az_{2,n}(x,t)= 0, & \left( x ,t\right) \in (0,1)\times(0,T),   \\[2mm]
z_{2,n}(1,t)=BC_\alpha\big(z_{2,n}(\cdot,t)\big)
= 0, & t\in \left(0,T\right), \\[3mm]
z_{2,n}\left(x, T\right) =y_T^2, &  x\in (0,1).
\end{array}
\right.
\end{equation*}
Similar to the proof of Case $1$, we have
$$
u^*_{2,n}(g_n)=M_2\dfrac{\chi_{\omega_2}z_{2}}{\|\chi_{\omega_2}z_{2}\|_{L^2(G_2)}}= u^*_{2}(g^*),
    $$
where $z_2$ is the solution of following system
\begin{equation*}
\left\{
\begin{array}{ll}
z_{2,t}(x,t) +Az_2(x,t)= 0, & \left( x ,t\right) \in (0,1)\times(0,T),   \\[2mm]
z_2(1,t)=BC_\alpha\big(z_{2}(\cdot,t)\big)
 = 0, & t\in \left(0,T\right), \\[3mm]
z_2\left(x, T\right) =y_T^2, &  x\in (0,1).
\end{array}
\right.
\end{equation*}
By Remark \ref{remark-N1-N2} and $y(T;y_0,g^*,u_1^*,u_2^*(g^*))=0=y_T^1$, we have $(u^*_{1},u^*_{2}(g^*))\in N_1$, i.e., $u_1^*=u_1^*(g^*)$.

Case $3$. If $(u_{1,n}^*(g_n),u_{2,n}^*(g_n))\in N_2$, i.e., $y_T^2=0$, the rest proof is similar to Case $2$, then we also have $u_1^*=u_1^*(g^*), u_2^*=u_2^*(g^*)$.

This completes the proof.
\end{proof}

\begin{remark}
 From the proof of Lemma~\ref{lemma-g-exist}, we see that under the assumption $(A_2)$, if there exists a Stackelberg-Nash equilibrium $(u_1^*(g),u_2^*(g))\in N$, then $(u_1^*,u_2^*)\in N_0$ or $(u_1^*(g),u_2^*)\in N_1$ or $(u_1^*,u_2^*(g))\in N_2$.
\end{remark}
\begin{remark}
Unfortunately, due to the influences of the functions $u^*_1(g^*)$ and $u^*_2(g^*)$ with respect to $g^*$, where $g^*$ is an optimal control of problem {\bf (P2)}, the confirmation of the bang-bang property of the optimal control $g^*$ remains elusive, thereby preventing the attainment of the uniqueness of the norm optimal control of problem {\bf (P2)}.
\end{remark}

\begin{lemma}\label{9.18.3}
		For any  $T\in\mathbb{R}^+$, let $ Y_T$ and $Z_T$ be defined as \eqref{9.18.1} and  \eqref{9.18.2} respectively. Then
		\begin{equation}\label{9.18.4}
		Y_T=Z_T.
		\end{equation}
	\end{lemma}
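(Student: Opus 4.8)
The plan is to establish the set equality \eqref{9.18.4} by proving the two inclusions $Y_T\subseteq Z_T$ and $Z_T\subseteq Y_T$ separately. Throughout I write $e^{A\tau}$ (for $\tau\ge0$) for the analytic contraction semigroup generated by $A$, so that the solution of the adjoint equation \eqref{eq:adjoint} with terminal data $z_T$ is $z(t)=e^{A(T-t)}z_T$, and I use freely the composition rule $e^{A\tau_1}e^{A\tau_2}=e^{A(\tau_1+\tau_2)}$ together with the self-adjointness of $A$. Note that for $\chi_\omega z\in Z_T$ the defining property, taken at $t=s$, yields $z_{T,s}=z(s)$, hence the evolution identity $z(\tau_1)=e^{A(\tau_2-\tau_1)}z(\tau_2)$ for all $0\le\tau_1\le\tau_2<T$; in particular $\|z(t)\|$ is nondecreasing on $[0,T)$.

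For the inclusion $Y_T\subseteq Z_T$ I would first check that $X_T\subseteq Z_T$: given $\chi_\omega z\in X_T$ with $z(t)=e^{A(T-t)}z_T$, for each $s\in(0,T)$ the choice $z_{T,s}:=z(s)=e^{A(T-s)}z_T\in L^2(0,1)$ gives $e^{A(s-t)}z_{T,s}=e^{A(T-t)}z_T=z(t)$ on $[0,s]$, which is exactly the defining property of $Z_T$, and $\chi_\omega z\in L^1(0,T;L^2(0,1))$ since $z\in C([0,T];L^2(0,1))$. It then remains to prove that $Z_T$ is closed in $L^1(0,T;L^2(0,1))$, for then $Y_T=\overline{X_T}$ is contained in $Z_T$. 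To this end I take $\chi_\omega z_n\to f$ in $L^1(0,T;L^2(0,1))$ with $z_n\in Z_T$, fix $\sigma\in[0,T)$, pick $s\in(\sigma,T)$, and apply the observability inequality of Corollary \ref{co-obine} to the adjoint solution $z_n-z_m$ on the subinterval $(\sigma,s)$; this gives $\|(z_n-z_m)(\sigma)\|\le C_{\sigma,s}\int_\sigma^s\|\chi_\omega(z_n-z_m)(t)\|\,dt\to0$, so $z_n(\sigma)$ converges in $L^2(0,1)$ to some $z(\sigma)$ for every $\sigma<T$. Passing to the limit in $z_n(t)=e^{A(s-t)}z_n(s)$ shows the limit $z$ is again a backward solution on $[0,T)$, and an a.e.\ convergent subsequence identifies $f=\chi_\omega z$; hence $\chi_\omega z\in Z_T$ and $Z_T$ is closed.

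For the reverse inclusion $Z_T\subseteq Y_T$ I would show that $X_T$ is dense in $Z_T$. Fix $\chi_\omega z\in Z_T$ and, for small $\delta>0$, define $\zeta_\delta(t):=e^{A(T-t)}z(T-\delta)$, so that $\chi_\omega\zeta_\delta\in X_T$ (its terminal value $z(T-\delta)$ lies in $L^2(0,1)$). The crucial observation is that $\zeta_\delta$ is merely a \emph{time-translate} of $z$: applying the evolution identity with $\tau_2=T-\delta$ and $\tau_1=t-\delta$ gives $\zeta_\delta(t)=z(t-\delta)$ for all $t\in[\delta,T]$, while for $t\in[0,\delta]$ one has $\zeta_\delta(t)=e^{A\delta}z(t)$. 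Consequently
\[
\|\chi_\omega(\zeta_\delta-z)\|_{L^1(0,T;L^2(0,1))}\le\int_0^\delta\big(\|z(t)\|+\|\chi_\omega z(t)\|\big)\,dt+\int_\delta^T\|\chi_\omega z(t-\delta)-\chi_\omega z(t)\|\,dt.
\]
The first integral is at most $\delta\,\|z(\delta)\|+\int_0^\delta\|\chi_\omega z(t)\|\,dt\to0$, because $\|z(t)\|\le\|z(\delta)\|\to\|z(0)\|<\infty$ on $[0,\delta]$, and the second integral tends to $0$ as $\delta\to0$ by the continuity of translations in $L^1(\R;L^2(0,1))$ applied to the zero-extension of $\chi_\omega z$. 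Thus $\chi_\omega\zeta_\delta\to\chi_\omega z$ in $L^1(0,T;L^2(0,1))$, so $\chi_\omega z\in\overline{X_T}=Y_T$.

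The main obstacle is that a general element $\chi_\omega z\in Z_T$ need only satisfy $\chi_\omega z\in L^1(0,T;L^2(0,1))$, so $\|z(t)\|$ may diverge as $t\to T$; this defeats any naive approximation that cuts off near $t=T$ and estimates the tail in the full $L^2(0,1)$-norm. The device that circumvents it is exactly the identity $\zeta_\delta(t)=z(t-\delta)$, which recasts the entire approximation error as a single $L^1$-translation increment of $\chi_\omega z$ and therefore never requires control of the possibly-divergent quantity $\|z(t)\|$ near the terminal time. For the closedness half, the corresponding point is to invoke the observability inequality on each interior subinterval $(\sigma,s)$ rather than on all of $(0,T)$, so that the blow-up of the constant as the interval shrinks towards $T$ plays no role.
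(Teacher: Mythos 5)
Your proof is correct, and it rests on the same two pillars as the paper's: the observability inequality of Corollary \ref{co-obine} applied on interior subintervals $(\sigma,s)$ with $s<T$, and approximation of a general element of $Z_T$ by the elements $e^{A(T-\cdot)}z(T-\delta)\in X_T$, which are time translates of $z$. The execution, however, differs in both halves, and in the first inclusion substantially enough to count as a different argument. For $Y_T\subseteq Z_T$, the paper takes a sequence $\chi_\omega z(\cdot;z_{T,n})\to\psi$ in $L^1$, uses observability only to obtain uniform bounds on $z(T_k;z_{T,n})$, and then constructs the limit backward solution by weak compactness, the compact embedding $H^1_\alpha(0,1)\hookrightarrow L^2(0,1)$, and a diagonal extraction over $T_k\uparrow T$. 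You instead exploit linearity: applying observability to the \emph{differences} $z_n-z_m$ on $(\sigma,s)$ makes $z_n(\sigma)$ a Cauchy sequence in $L^2(0,1)$ for every $\sigma<T$, giving strong convergence at every interior time with no compactness or diagonalization; packaged as the statement that $Z_T$ is closed in $L^1(0,T;L^2(0,1))$, this immediately yields $Y_T=\overline{X_T}\subseteq Z_T$. That is shorter, cleaner, and isolates a structural fact of independent interest. For $Z_T\subseteq Y_T$, your approximants coincide with the paper's $z_k(t)=e^{A(T-t)}z(T_k)$ (take $\delta=T-T_k$), and the key identity $\zeta_\delta(t)=z(t-\delta)$ on $[\delta,T]$ is also the paper's; but the paper realizes it by extending $z$ backward to a solution $\tilde z$ on $(-T,T)$ and then combining absolute continuity of the $L^1$ norm near $T$ with uniform continuity of $\tilde z$ on compact subintervals (after first proving convergence on each $[0,T_n]$ by a separate compactness argument), whereas you avoid both the extension and the intermediate step by treating $[0,\delta]$ with the contraction bound $\|e^{A\delta}z(t)\|\le\|z(t)\|$ and reducing the rest to continuity of translations in $L^1(\R;L^2(0,1))$. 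Both treatments address precisely the difficulty you identify, namely that $\|z(t)\|$ may blow up as $t\to T$ for elements of $Z_T$; yours does so with noticeably less machinery.
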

	
	\begin{proof} The proof will be accomplished by following two  steps.
		
		{\it Step 1: Show that $Y_T\subset Z_T$.}  	
		
		Indeed, for any  $\psi\in Y_T$ and noting the embedding $H_\alpha^1(0,1)\hookrightarrow L^2(0,1)$ is compact and similar to the proof of (\ref{result-5:7}) and (\ref{result-5:73}), there exists $\{z_{T,n}\}_{n\geq 1}\subset L^2(0,1)$ such that
\begin{equation}\label{9.18.8}
		 \chi_\omega z(\cdot;z_{T,n})\rightarrow \psi \mbox{ strongly in } L^1(0,T; L^2(0,1)),
		\end{equation}
		where $z(\cdot;z_{T,n})$ is the solution of system \eqref{eq:adjoint} with the terminal  data $z_{T,n}\in L^2(0,1)$.
		Hence
		\begin{equation}\label{9.18.9}
		\|\chi_\omega z(\cdot; z_{T,n})\|_{L^1(0,T; L^2(0,1))}\leq C.
		\end{equation}
		Choose  $\{T_n\}_{n\geq 1}$ with  $T_n\uparrow T$ and $T_n\geq \frac{T}{2}$ for all $n\in\mathbb{N}$.  %
% here and in what follows, we denote $a_n\downarrow a_0$ with $\{a_n\}_{n=0}^\infty\subset\mathbb{R}$ by $a_n\rightarrow a_0\ (n\rightarrow \infty)$ and $a_n>a_{n+1}$ for all $n\in\mathbb{N}$; we also denote $a_n\uparrow a_0$ with $\{a_n\}_{n=0}^\infty\subset \mathbb{R}$ by $a_n\rightarrow a_0\ (n\rightarrow \infty)$ and $a_n<a_{n+1}$ for all $n\in\mathbb{N}$.
We consider the following three cases.
		
		{\it Case a).}  For $T_1$, there exists a constant $C_1=C(C_O,T_2)>0$ such that
		\begin{equation*}
		\|z(T_2; z_{T,n})\|\leq C_1  \|\chi_\omega z(\cdot; z_{T,n})\|_{L^1(T_2,T; L^2(0,1))}\leq C_1C,
		\end{equation*}
		where we used the observability inequality \eqref{eq:obs-inq1}. Hence there exists a subsequence $\{z_{T,1n}\}_{n\geq 1}$ of $\{z_{T,n}\}_{n\geq 1}$ and $\hat z_{T,1}\in L^2(0,1)$ such that
		\begin{equation}\label{9.18.10}
		z(T_2; z_{T,1n})\rightarrow \hat z_{T,1} \mbox{ weakly in } L^2(0,1).
		\end{equation}
  
		Denote $z_{1n}=z(\cdot; z_{T,1n})$. Then  $z_{1n}$ is the solution to the following system
		\begin{equation*}
		\left\{\!\!
\begin{array}{ll}
z_{1n,t}(x,t) +Az_{1n}(x,t)= 0, & \left( x ,t\right) \in (0,1)\times(0,T_2),   \\[2mm]
z_{1n}(1,t)=BC_\alpha\big(z_{1n}(\cdot,t)\big)
 = 0, & t\in \left(0,T_2\right), \\[3mm]
z_{1n}\left(x, T\right) =z_{T,1n}, &  x\in (0,1)\,.
\end{array}
\right.
		\end{equation*}
		Let $\hat z_1\in L^1(0,T_2; L^2(\Om))\cap C([0,T]; L^2(\Om))$ be the solution to the following system
		\begin{equation*}
		\begin{cases}
		\partial_t\hat z_1(x,t)+A \hat z_1(x,t)=0, &\left( x ,t\right) \in (0,1)\times(0,T_2),\\
  \hat z_1(1,t)=BC_\alpha\big(\hat z_1(\cdot,t)\big)
 = 0, & t\in \left(0,T_2\right),\\
		\hat z_1(T_2)=\hat z_{T,1}, & x\in (0,1).
		\end{cases}
		\end{equation*}
Similar to the proof of (\ref{9.18.8}), by  \eqref{9.18.10}, for every  $\delta\in (\frac{T}{2}, T)$, we have
		\begin{equation*}
		\chi_\omega z_{1n}\rightarrow \chi_\omega\hat z_1 \mbox{ strongly in } L^1(0,T_2; L^2(0,1))\cap C([0,T_2-\delta); L^2(0,1)),
		\end{equation*}
		and
		\begin{equation}\label{9.18.11}
		\chi_\omega z_{1n}\rightarrow \chi_\omega\hat z_1 \mbox{ strongly in } L^1(0,T_1; L^2(0,1))\cap C([0,T_1]; L^2(0,1)).
		\end{equation}
		This, together with \eqref{9.18.8}, implies that
\begin{equation}\label{9.18.13}
		\psi=\chi_\omega \hat z_1 \mbox{ in } L^1(0,T_1; L^2(0,1)),
		\end{equation}
		where $\hat z_1(\cdot)=e^{A (T_1-\cdot)}z_{T,1}$ and $z_{T,1}=e^{A (T_2-T_1)}\hat z_{T,1}$.
		
		{\it Case b).}  Along the same way as case a), there exist a subsequence $\{z_{T,2n}\}$ of $\{z_{T,1n}\}$ and  $\hat z_2\in L^1(0,T_3;L^2(0,1))\cap C([0,T_3]; L^2(0,1))$ such that
		\begin{equation*}
		\begin{cases}
		\partial_t\hat z_2(x,t)+A \hat z_2(x,t)=0, &\left( x ,t\right) \in (0,1)\times(0,T_3),\\
 \hat z_2 (1,t)=BC_\alpha\big(\hat z_2(\cdot,t)\big)
 = 0, & t\in \left(0,T_3\right),\\
		\hat z_2(T_3)=\hat z_{T,2}, & x\in (0,1),
		\end{cases}
		\end{equation*}
		and
		\begin{equation*}
\chi_\omega z_{2n}=\chi_\omega z(\cdot; z_{T,2n})\rightarrow \chi_\omega\hat z_2 \mbox{ strongly in } L^1(0,T_2; L^2(0,1))\cap C([0,T_2]; L^2(0,1)).
		\end{equation*}
		These combine with \eqref{9.18.8}, \eqref{9.18.11} and \eqref{9.18.13}  imply that
		\begin{equation*}
		\chi_\omega\hat z_2|_{[0,T_1]}=\chi_\omega \hat z_1,
		\end{equation*}
		and
		\begin{equation*}
		\psi=\chi_\omega\hat z_2 \mbox{ in } L^1(0,T_2; L^2(0,1)).
		\end{equation*}
		where $\hat z_2(\cdot)=e^{A (T_2-\cdot)}z_{T,2}$ and $z_{T,2}=e^{A(T_3-T_2)}\hat z_{T,2}$.
		
		{\it Case c).}  Similarly to a) and b), for every $k\in\mathbb{N}$, we can find a subsequence $\{z_{T,kn}\}_{n\geq 1}$ of $\{z_{T,k-1,n}\}_{n\geq1}$ such that
		\begin{itemize}
			\item $ \hat z_k\in L^1(0,T_{k+1}; L^2(0,1))\cap C([0,T_{k+1}];L^2(0,1))$,
			
			\item $ \hat z_{k+1}|_{[0,T_{k}]}=\hat z_{k}$,
			
			\item $ \hat z_k$ satisfies
			\begin{equation*}
		\begin{cases}
		\partial_t\hat z_k(x,t)+A \hat z_k(x,t)=0, &\left( x ,t\right) \in (0,1)\times(0,T_{k+1}),\\
 \hat z_k (1,t)=BC_\alpha\big(\hat z_k(\cdot,t)\big)
 = 0, & t\in \left(0,T_{k+1}\right),\\
		\hat z_k(T_{k+1})=\hat z_{T,k}, & x\in (0,1).
		\end{cases}
		\end{equation*}
			
			\item $\psi=\chi_\omega \hat z_k$ in $L^1(0,T_k; L^2(0,1))$, where $\hat z_k(\cdot)=e^{A (T_k-\cdot)}z_{T,k}$ and $z_{T,k}=e^{A (T_{k+1}-T_k)}\hat z_{T,k}$.
			
		\end{itemize}
  
		Finally, define
		\begin{equation*}
		\hat z(t)=\hat z_k(t) \mbox{ for } t\in [0,T_k] \mbox{ and  } k\in\mathbb{N}.
		\end{equation*}
		Such  $\hat z$ is well defined over  $[0,T)$ since $\hat z_{k+1}|_{[0,T_k]}=\hat z_k$, which satisfies $\hat z\in C([0,T); L^2(\Omega))\cap L^1(0,T; L^2(\Omega))$, and
		\begin{equation*}
		\begin{cases}
		\partial_t\hat z(x,t)+A \hat z(x,t)=0, &\left( x ,t\right) \in (0,1)\times(0,T),\\ 
  \hat z(1,t)=BC_\alpha\big(\hat z(\cdot,t)\big)
= 0, & t\in \left(0,T\right).
		\end{cases}
		\end{equation*}
		Furthermore,
		\begin{equation*}
		\psi=\chi_\omega \hat z \mbox{ in } L^1(0,T; L^2(0,1)),
		\end{equation*}
		where $\hat z(\cdot)=e^{A (s-\cdot)}z_{T,s}$ with $z_{T,s}\in L^2(0,1)$ and $s\in (0,T)$.
		From this  we obtain $Y_T\subset Z_T$.

		{\it Step 2:  Show that $Z_T\subset Y_T$.}
		
		In fact, by the definition of $Z_T$, for any $\xi\in Z_T$,  there exists $z$ such that
		\begin{equation*}
		\xi=\chi_\omega z\in L^1(0,T; L^2(0,1))
		\end{equation*}
		and for every $s\in (0,T)$ there exists $z_{T,s}\in L^2(0,1)$ such that
		\begin{equation*}
z(\cdot)=e^{A(s-\cdot)}z_{T,s} \mbox{ on } [0,s].
		\end{equation*}
  
		Let $\{T_n\}_{n\geq 1}\subset (\frac{T}{2}, T)$ with $T_n\uparrow T$. Then, for every $n\in\mathbb{N}$, there exists $z_{T,n}\in L^2(0,1)$ such that
		\begin{equation*}
z(\cdot)=e^{A (T_n-\cdot)}z_{T,n} \mbox{ on } [0,T_n].
		\end{equation*}
		This implies that for all $k\geq n$,
		\begin{equation}\label{9.18.16}
		z_{T,n}=z(T_n)=e^{A (T_k-T_n)}z_{T,k}.
		\end{equation}
		For any $l\in\mathbb{N}$, assume that $z_l(\cdot)=z(\cdot; z_{T,l})$, i.e., $z_l$ is the solution to the following system
		\begin{equation*}
		\left\{\!\!
\begin{array}{ll}
z_{l,t}(x,t) +Az_{l}(x,t)= 0, & \left( x ,t\right) \in (0,1)\times(0,T),   \\[2mm]
z_{l}(1,t)=BC_\alpha\big(z_{l}(\cdot,t)\big)
= 0, & t\in \left(0,T\right), \\[3mm]
z_{l}\left(x, T\right) =z_{T,l}, &  x\in (0,1)\,.
\end{array}
\right.
		\end{equation*}
		Then for every  fixed $n\in\mathbb{N}$ and   all $l, k\geq n$ with $k\geq l$, we have
		\begin{equation*}
		\begin{split}
		&\|z_k(T_n)-z_l(T_n)\|\\
		=& \left\|e^{A(T-T_n)}z_{T,k}-e^{A(T-T_n)}z_{T,l}\right\|
		=\left\| e^{A (T-T_k)}e^{A(T_k-T_n)}z_{T,k}-e^{A (T-T_l)}e^{A (T_l-T_n)}z_{T,l}\right\|\\
		=& \left\|e^{A (T-T_k)} z_{T,n}-e^{A (T-T_l)}z_{T,n}\right\|
		=\left\| e^{A(T-T_k)}\left(z_{T,n}-e^{A (T_k-T_l)}z_{T,n}\right)\right\|\\
		\leq&\left\| z_{T,n}-e^{A (T_k-T_l)}z_{T,n}\right\|,
		\end{split}
		\end{equation*}
		where we used \eqref{9.18.16} in  the third equality above and the operator $A$ generates a $C_0$-semigroup of linear
contractions in the last inequality above, see \cite{cannarsa2012unique}.  This
		implies that $\{z_k(T_n)\}_{k\geq n}$ is a Cauchy sequence in  $L^2(0,1)$. Furthermore,
		\begin{equation*}
		\begin{split}
		\|z_k(T_n)-z_{T,n}\|
		=& \left\|e^{A(T-T_n)}z_{T,k}-z_{T,n}\right\|
		= \left\|e^{A(T-T_k)}z_{T,n}-z_{T,n}\right\|
		\rightarrow 0 \mbox{ as } k\rightarrow\infty,
		\end{split}
		\end{equation*}
		from which and $z_{T,n}=z(T_n)$ and similar to the proof of (\ref{9.18.8}) we obtain that
\begin{equation}\label{2.2.1.3}
		\chi_\omega z_k\rightarrow \chi_\omega z \mbox{ strongly in } L^1(0,T_n; L^2(0,1)) \mbox{ as } k\rightarrow \infty \mbox{ for every } n\in\mathbb{N}.
		\end{equation}	
  
  Now, it remains to show that
		\begin{equation}\label{2.2.1.2}
		\chi_\omega z_k \rightarrow \chi_\omega z \mbox{ strongly in } L^1(0,T; L^2(0,1)).
		\end{equation}
		Let $\tilde z$ satisfy
		\begin{equation*}
		\begin{cases}
		\partial_t\tilde z(x,t)+A \tilde z(x,t)=0, &\left( x ,t\right) \in (0,1)\times(-T,T),\\
 \tilde z (1,t)=BC_\alpha\big(\tilde  z(\cdot,t)\big)
 = 0, & t\in \left(-T,T\right),
		\end{cases}
		\end{equation*}
		and
		\begin{equation*}
		\tilde z=z \mbox{ in } (0,1)\times(0,T).
		\end{equation*}
		This is a backward  equation which must admit  solution $\tilde z$, and
		\begin{equation*}
		\tilde z\in C([-T, T); L^2(0,1))\cap L^1(-T, T; L^2(0,1)).
		\end{equation*}
		By the definitions of $\tilde z$ and $z$,  we have
		\begin{equation*}
		z_k(t)=\tilde z(t-(T-T_k)) \mbox{ on } t\in (0,T).
		\end{equation*}
		By the absolutely continuity of $\chi_\omega \tilde z$,  (i.e., for any $\varepsilon>0$,
		there exists $\delta(\varepsilon)>0$ such that for all $a,b\in [-T,T]$ with $a<b$ and $|a-b|\leq \delta(\varepsilon)$) we have
		\begin{equation}\label{7.17.1}
		\|\chi_\omega \tilde z\|_{L^1(a,b; L^2(0,1))}<\varepsilon,
		\end{equation}
		and by the continuity of $\tilde z$, for $\varepsilon>0$ as above, there exists $\eta(\varepsilon)>0$ such that for all $a,b\in [-T, T-\frac{\delta(\varepsilon)}{2}]$ with $a<b$ and $|a-b|<\eta(\varepsilon)$,
		\begin{equation}\label{7.17.2}
		\|\tilde z(a)-\tilde z(b)\|<\varepsilon.
		\end{equation}
  
		Let $k_0=k_0(\varepsilon)$ be such that for all $k\geq k_0$,
		\begin{equation*}
		0<T-T_k\leq \eta(\varepsilon) \mbox{ for all } k\geq k_0.
		\end{equation*}
		This, together with \eqref{2.2.1.3}, \eqref{7.17.1} and \eqref{7.17.2}, implies that
		\begin{equation*}
		\begin{split}
		&\|\chi_\omega z-\chi_\omega z_k\|_{L^1(0,T; L^2(0,1))}\\
		=& \|\chi_\omega \tilde z(\cdot)-\chi_\omega \tilde z(\cdot-(T-T_k))\|_{L^1(0,T; L^2(0,1))}\\
		\leq & \|\chi_\omega \tilde z(\cdot)-\chi_\omega \tilde z(\cdot-(T-T_k))\|_{L^1(0,T-\frac{\delta(\varepsilon)}{2}; L^2(0,1))}
		+\|\chi_\omega \tilde z(\cdot-(T-T_k))\|_{L^1(T-\frac{\delta(\varepsilon)}{2}, T; L^2(0,1))}\\
		&+\|\chi_\omega \tilde z\|_{L^1(T-\frac{\delta(\varepsilon)}{2}, T; L^2(0,1))}\\
		\leq& \|\tilde z(\cdot)-\tilde z(\cdot-(T-T_k))\|_{L^1(0,T-\frac{\delta(\varepsilon)}{2};L^2(0,1))}
		+ \|\chi_\omega \tilde z(\cdot-(T-T_k))\|_{L^1(T-\frac{\delta(\varepsilon)}{2}, T; L^2(0,1))}\\
		&+\|\chi_\omega \tilde z\|_{L^1(T-\frac{\delta(\varepsilon)}{2}, T; L^2(0,1))}\\
		\leq& \left(T-\frac{\delta(\varepsilon)}{2}\right)\varepsilon+2 \varepsilon.
		\end{split}
		\end{equation*}
	  This gives \eqref{2.2.1.2}.
		We have thus proved $Z_T\subset Y_T$. The Lemma \ref{9.18.3} is then proved by combination of  Steps 1 and 2.
	\end{proof}
 We are now ready to prove Theorem \ref{9.15.1}.
\begin{proof}[\textbf{Proof of Theorem~\ref{9.15.1}}]
We carry out this proof by four steps.

{\it Step 1.  The solvability of $V(T,y_0)$.}

Let $z$ with $\chi_\omega z\in Z_T$. From Lemma \ref{9.18.3}, we have
\begin{equation*}
    \begin{split}
        &J(\chi_\omega z)\\
        =& \frac{1}{2}\left\|\chi_\omega z\right\|_{L^1(0,T; L^2(0,1))}^2+\left\langle y_0, z(0)\right\rangle+\int_0^T\int_0^1\sum_{i=1}^2\chi_{\omega_i}u_i^*(g^*)zdxdt\\
        \geq&  \frac{1}{2}\left\|\chi_\omega z\right\|_{L^1(0,T; L^2(0,1))}^2-\|y_0\| \|z(0)\|-\max\{M_1,M_2\}\|\chi_\omega z\|_{L^1(0,T; L^2(0,1))}\\
        \geq& \frac{1}{2}\left\|\chi_\omega z\right\|_{L^1(0,T; L^2(0,1))}^2-C\|y_0\| \left\|\chi_\omega z\right\|_{L^1(0,T; L^2(0,1))}-\max\{M_1,M_2\}\|\chi_\omega z\|_{L^1(0,T; L^2(0,1))},
    \end{split}
\end{equation*}
where we used observability inequality \eqref{eq:obs-inq1} and  $(A_1)$,
		which implies that
\begin{equation*}
		\lim_{\|\chi_\omega z\|_{L^1(0,T; L^2(0,1))}\rightarrow\infty}J(\chi_\omega z)=\infty.
		\end{equation*}
		Since  $J(\chi_\omega z)$ is convex and  continuous, therefore there exists $z_*$ with $\chi_\omega z_*\in Z_T$ such that
		\begin{equation}\label{9.18.23}
		V(T,y_0)=\frac{1}{2}\left\|\chi_\omega z_*\right\|_{L^1(0,T; L^2(0,1))}^2+\left\langle y_0, z_*(0)\right\rangle+\int_0^T\int_0^1\sum_{i=1}^2\chi_{\omega_i}u_i^*(g^*)z_*dxdt.
		\end{equation}
		
{\it Step 2. Construct a control $\bar{g}$.}

Case ($a$). If $z_{*,T}\in L^2(0,1)\setminus\{0\}$,  noting that $z_*(t)=e^{A(T-t)}z_{*,T}$ for all $t\in[0,T]$, from observability inequality (\ref{eq:obs-inq1}), we obtain \begin{equation}\label{***}
		z_*(t)\neq 0 \mbox{ for a.e. } t\in (0,T).
		\end{equation} 
  Hence, for all $h\in\mathbb{R}$, we have
		\begin{align*}
		0\leq&
		\frac{1}{2}\left\|\chi_\omega(z_*+hz)\right\|_{L^1(0,T; L^2(0,1))}^2+\left\langle  y_0,  (z_*+hz)(0)\right\rangle+\int_0^T\int_0^1\sum_{i=1}^2\chi_{\omega_i}u_i^*(g^*)(z_*+hz)dxdt\\
  &-\frac{1}{2}\left\|\chi_\omega z_*\right\|_{L^1(0,T;L^2(0,1))}^2-\left\langle  y_0,  z_*(0)\right\rangle-\int_0^T\int_0^1\sum_{i=1}^2\chi_{\omega_i}u_i^*(g^*)z_*dxdt\\
  =& \frac{1}{2} \int_0^T\frac{2h\int_0^1 \chi_{\omega}z_*(t)\chi_{\omega}z(t) dx+h^2\int_0^1 \chi_\omega z(t)^2 dx}{\|\chi_{\omega}z_*(t)+h\chi_\omega z(t)\|+\|\chi_{\omega}z_*(t)\|} dt\\
  &\cdot \,\,\int_0^T\left(\|\chi_\omega z_*(t)+h\chi_\omega z(t)\|+\|\chi_\omega z_*(t)\|\right) dt\\
		&+h\langle y_0, z(0)\rangle+h\int_0^T\int_0^1\sum_{i=1}^2\chi_{\omega_i}u_i^*(g^*)zdxdt.
		\end{align*}
		Passing to the limit as $h\rightarrow 0$,  we arrive at the Euler-Lagrange equation: for all $z$ with $\chi_\omega z\in Z_T$,
		\begin{equation}\label{EL5}
\int_0^T\frac{\int_0^1 \chi_\omega z_*(t)z(t)  dx}{\|\chi_\omega z_*(t)\|} dt\cdot \int_0^T\|\chi_\omega z_*(t)\| dt+\langle y_0, z(0)\rangle+\int_0^T\int_0^1\sum_{i=1}^2\chi_{\omega_i}u_i^*(g^*)zdxdt=0.
		\end{equation}
		In particular,
		\begin{equation}\label{9.18.18}
		\|\chi_\omega z_*\|_{L^1(0,T; L^2(0,1))}^2+\langle y_0, z_*(0)\rangle+\int_0^T\int_0^1\sum_{i=1}^2\chi_{\omega_i}u_i^*(g^*)z_*dxdt=0.
		\end{equation}
		By \eqref{***}, let
		\begin{equation}\label{9.18.22}
		\bar{g}(t)=\frac{\chi_\omega z_*(t)}{\|\chi_\omega z_*(t)\|}\|\chi_\omega z_*\|_{L^1(0,T; L^2(0,1))} \mbox{ for a.e. } t\in (0,T),
		\end{equation}
		then we have
		\begin{equation}\label{9.18.21}
		\| \bar{g}\|_{L^\infty(0,T;L^2(0,1))}=\|\chi_\omega z_*\|_{L^1(0,T; L^2(0,1))}.
		\end{equation}
Case ($b$). If $z_{*,T}=0$, similar to Case $(a)$,  we arrive at the Euler-Lagrange equation: for all $z$ with $\chi_\omega z\in Z_T$,
		\begin{equation}\label{EL5-2}
\langle y_0, z(0)\rangle+\int_0^T\int_0^1\sum_{i=1}^2\chi_{\omega_i}u_i^*(g^*)zdxdt=0.
		\end{equation}
Let	\begin{equation}\label{9.18.22-2}
		\bar{g}(t)=0 \mbox{ for a.e. } t\in (0,T).
		\end{equation}	
		
	{\it Step 3. Show that $g^*(t)=\frac{\chi_\omega z_*(t)}{\|\chi_\omega z_*(t)\|}\|\chi_\omega z_*\|_{L^1(0,T; L^2(0,1))} $ or $g^*(t)=0$.}

Case $(i)$. If \eqref{9.18.22} holds, then multiplying $z$ with $\chi_\omega z\in X_T\subset Z_T$ on both sides of the first equation of the following system
		\begin{equation*}
		\begin{cases}
\begin{array}{ll}
y_t(x,t) = Ay(x,t) +\chi_\omega \bar{g}+\chi_{\omega_1} u_1^*(g^*)+\chi_{\omega_2} u_2^*(g^*), & \left( x ,t\right) \in (0,1)\times(0,T),   \\[2mm]
y(1,t)=BC_\alpha\big(y(\cdot,t)\big)
= 0, & t\in \left(0,T\right), \\[3mm]
y\left(x, 0\right) =y_{0}(x), &  x\in (0,1)\,,
\end{array}
		\end{cases}
		\end{equation*}
		and integrating with respect to  $x\in(0,1)$ and $t\in(0,T)$, it produces
\begin{align*}
	&\langle y(T; y_0,\bar{g}, u_1^*(g^*),u_2^*(g^*)),z_T \rangle	-\langle y(0), z(0)\rangle-\int_0^T\int_0^1\sum_{i=1}^2\chi_{\omega_i}u_i^*(g^*)zdxdt\\ =&\int_0^T\frac{\int_\omega z_*(t)z(t)  dx}{\|\chi_{\omega} z_*(t)\|} dt\cdot \|\chi_\omega z_*\|_{L^1(0,T; L^2(0,1))},
		\end{align*}
which, along with \eqref{EL5}, it implies that %for all $z_T\in L^2(0,1)$,
 $$
 y(T; y_0,\bar{g}, u_1^*(g^*),u_2^*(g^*))=0.
 $$
From the proof of Lemma~\ref{lemma-g-exist}, we know that $u_1^*(\bar{g})=u_1^*(g^*)$ or $u_2^*(\bar{g})=u_2^*(g^*)$.\begin{itemize}
  \item [Case 1.] If $u_1^*(\bar{g})=u_1^*(g^*)$, %noting that $g^*$ is an optimal control of problem {\bf (P2)}, then we have
  %$$
  %y(T;y_0,g^*,u_1^*(\bar{g}),u_2^*(g^*))=0,
  %$$
  which, along with Remark \ref{remark-N1-N2} and $y(T;y_0, \bar g, u_1^*(\bar g_1), u_2^*(g^*))=0$, implies that
$u_2^*(\bar{g})=u_2^*(g^*)$.
 \item [Case 2.] If $u_2^*(\bar{g})=u_2^*(g^*)$, similar to Case 1, we also have $u_1^*(\bar{g})=u_1^*(g^*)$.
\end{itemize}
From these discussions, we obtain $u_1^*(\bar{g})=u_1^*(g^*)$ and $u_2^*(\bar{g})=u_2^*(g^*)$.

Next,	since $g^*$ is the optimal control of the problem {\bf (P2)}, then we have
		\begin{equation*}
		y(T; y_0,g^*, u_1^*(g^*),u_2^*(g^*))=0.
		\end{equation*}
Since $Z_T=Y_T$, similar to the proof of (\ref{9.18.8}), assume $\{z_{T,n}\}_{n\geq 1}\subset L^2(0,1)$ be such that
	\begin{equation}\label{1}
		\chi_\omega z(\cdot; z_{T,n})\rightarrow \chi_\omega z_* \mbox{ strongly in } L^1(0,T; L^2(0,1)),
		\end{equation}
  and
  \begin{equation}\label{11}
		\chi_{\omega_i} z(\cdot; z_{T,n})\rightarrow \chi_{\omega_i} z_* \mbox{ strongly in } L^1(0,T; L^2(0,1)),~\text{for}~i=1,2.
		\end{equation}
Multiplying $z(\cdot; z_{T,n})$ on both sides of the first equation of the following system
		\begin{equation*}
		\begin{cases}
\begin{array}{ll}
y_t(x,t) = Ay(x,t) +\chi_\omega g^*+\chi_{\omega_1} u_1^*(g^*)+\chi_{\omega_2} u_2^*(g^*), & \left( x ,t\right) \in (0,1)\times(0,T),   \\[2mm]
y(1,t)=BC_\alpha\big(y(\cdot,t)\big)
= 0, & t\in \left(0,T\right), \\[3mm]
y\left(x, 0\right) =y_{0}(x), &  x\in (0,1)\,,
\end{array}
		\end{cases}
		\end{equation*}
		and integrating with respect to  $x\in(0,1)$ and $t\in(0,T)$, it produces
\begin{equation}\label{****}
		-\langle y(0), z(0;z_{T,n})\rangle-\int_0^T\int_0^1\sum_{i=1}^2\chi_{\omega_i}u_i^*(g^*)z(t;z_{T,n})dxdt=\int_0^T \langle g^*(t), \chi_\omega z(t;z_{T,n})\rangle dt.
		\end{equation}
By the same arguments as  the proof of Lemma \ref{9.18.3}, there exists a subsequence of $\{z_{T,n}\}_{n\geq 1}$, still denoted by itself, such that		\begin{equation}\label{aa}
		z(0; z_{T,n})\rightarrow z_*(0) \mbox{ weakly in } L^2(0,1).
		\end{equation}
By \eqref{1}-\eqref{aa}, letting $n\rightarrow\infty$, we obtain that
		\begin{equation*}
		-\langle y(0), z_*(0)\rangle-\int_0^T\int_0^1\sum_{i=1}^2\chi_{\omega_i}u_i^*(g^*)z_*(t)dxdt=\int_0^T \langle g^*(t),\chi_\omega z_*(t)\rangle dt,
		\end{equation*}
which, along with \eqref{9.18.18} and \eqref{9.18.21}, we have
		\begin{equation}\label{9}
		\| \bar{g}\|_{L^\infty(0,T;L^2(0,1))}=\|\chi_\omega z_*\|_{L^1(0,T; L^2(0,1))}\leq \| g^*\|_{L^\infty(0,T; L^2(0,1))}.
		\end{equation}
This implies that
$$
\|\bar{g}\|_{L^\infty(0,T;L^2(0,1))}=\| g^*\|_{L^\infty(0,T; L^2(0,1))}=N(T,y_0).
$$
Therefore, $\bar{g}$ is also an optimal control of problem {\bf (P2)}. Replacing  $g^*$ by $\bar{g}$ and together with \eqref{9.18.22}, we have
	$$
g^*(t) =\frac{\chi_\omega z_*(t)}{\|\chi_\omega z_*(t)\|}\|\chi_\omega z_*\|_{L^1(0,T; L^2(0,1))} \mbox{ for a.e. } t\in (0,T).
 $$	
 
Case $(ii)$. If \eqref{9.18.22-2} holds, then multiplying $z$ with $\chi_\omega z\in X_T\subset Z_T$ on both sides of the first equation of the following system
		\begin{equation*}
		\begin{cases}
\begin{array}{ll}
y_t(x,t) = Ay(x,t) +\chi_{\omega_1} u_1^*(g^*)+\chi_{\omega_2} u_2^*(g^*), & \left( x ,t\right) \in (0,1)\times(0,T),   \\[2mm]
y(1,t)=BC_\alpha\big(y(\cdot,t)\big)
 = 0, & t\in \left(0,T\right), \\[3mm]
y\left(x, 0\right) =y_{0}(x), &  x\in (0,1)\,,
\end{array}
		\end{cases}
		\end{equation*}
		and integrating with respect to  $x\in(0,1)$ and $t\in(0,T)$, it produces
\begin{align*}
	\langle y(T; y_0,0, u_1^*(g^*),u_2^*(g^*)),z_T \rangle	-\langle y(0), z(0)\rangle-\int_0^T\int_0^1\sum_{i=1}^2\chi_{\omega_i}u_i^*(g^*)zdxdt =0,
		\end{align*}
which, along with \eqref{EL5-2}, it implies that %for all $z_T\in L^2(0,1)$,
 $$
 y(T; y_0,0, u_1^*(g^*),u_2^*(g^*))=0.
 $$
Similar to the proof of Case ($i$), we obtain $u_1^*(0)=u_1^*(g^*)$ and $u_2^*(0)=u_2^*(g^*)$. This shows that  $$
 y(T; y_0,0, u_1^*(0),u_2^*(0))=0.
 $$
Therefore, by the definition of $N(T,y_0)$, which defined in \eqref{9.11.2},  $\bar{g}=0$ is also an optimal control of problem {\bf (P2)}. Replacing  $g^*$ by $\bar{g}$ and together with \eqref{9.18.22-2}, we have
	$$
g^*(t) =0 \mbox{ for a.e. } t\in (0,T).
 $$

{\it Step 4. Conclude that $V(T,y_0)=-\frac{1}{2}N(T,y_0)^2$.} 	

		By Step 3, it shows that\\
Case ($I$). If
   $$
   g^*(t) =\frac{\chi_\omega z_*(t)}{\|\chi_\omega z_*(t)\|}\|\chi_\omega z_*\|_{L^1(0,T; L^2(0,1))} \mbox{ for a.e. } t\in (0,T),
   $$
   then
   \begin{equation*}
		N(T,y_0)=\|\chi_\omega z_*\|_{L^1(0,T; L^2(0,1))},
		\end{equation*}
		which, together with \eqref{9.18.18} and \eqref{9.18.23}, deduces that
		\begin{equation*}
		V(T,y_0)=-\frac{1}{2} \|\chi_\omega z_*\|_{L^1(0,T;L^2(0,1))}^2=-\frac{1}{2}N(T,y_0)^2.
		\end{equation*} 
Case ($II$). If
   $$
   g^*(t)=0,\,\mbox{ for a.e. } t\in (0,T),
   $$
   then
    $$
  V(T,y_0)=N(T,y_0).
  $$
From these, we have
$$
V(T,y_0)=-\frac{1}{2}N(T,y_0)^2.
$$
The proof is completed.
	\end{proof}

\vspace{3mm}

\noindent{\bf Acknowledgement}

\vspace{2mm}

The first three authors are supported by the National Natural Science Foundation of China under grant 11871478, the Science Technology Foundation of Hunan Province.

The last author is supported by the National Natural Science Foundation of China under grant 11971363, and by the Fundamental Research Funds for the Central Universities under grant 2042023kf0193.

\bibliographystyle{abbrvnat}
\bibliography{ref.bib}

\end{document}